\numberwithin{equation}{section}
\newtheorem{theorem}{Theorem}[section]
\newtheorem{lemma}[theorem]{Lemma}
\newtheorem{proposition}[theorem]{Proposition}
\newtheorem{remark}[theorem]{Remark}
\newcounter{thmc}
\newtheorem{thmcite}[thmc]{Theorem}
\theoremstyle{definition}
\DeclareMathSymbol{\leqslant}{\mathalpha}{AMSa}{"36} % nicer `smaller or equal'
\DeclareMathSymbol{\geqslant}{\mathalpha}{AMSa}{"3E} % nicer `larger or equal'
\DeclareMathSymbol{\eset}{\mathalpha}{AMSb}{"3F}     % nicer `emptyset'
\renewcommand{\leq}{\;\leqslant\;}                   % redef. of < or =
\renewcommand{\geq}{\;\geqslant\;}                   % redef. of > or =
\newcommand{\C}{\mathbb{C}}
\newcommand{\R}{\mathbb{R}}
\newcommand{\E}{\mathds{E}}
\newcommand{\ps}[1]{\left\langle #1 \right\rangle}
\def\eps{\varepsilon}
\def\S{\mathbb{S}}
\def\bi{\begin{itemize}}
\def\ei{\end{itemize}}
\def\bnum{\begin{enumerate}}
\def\enum{\end{enumerate}}
\def\<#1{\langle #1 \rangle}
\definecolor{ao(english)}{rgb}{0.0, 0.5, 0.0}
\definecolor{darkcandyapplered}{rgb}{0.64, 0.0, 0.0}
\definecolor{amber}{rgb}{1.0, 0.49, 0.0}
\def\V{\bm{\mathrm V}}
\def\X{\bm{\mathrm X}}
\newcommand{\norm}[1]{\left\lvert#1\right\rvert}
\newcommand{\expect}[1]{\mathbb{E}\left[#1\right]}
\title{Ward identities in the $\mathfrak{sl}_3$ Toda conformal field theory}
\author{Baptiste Cercl\'e}
\email{baptiste.cercle@universite-paris-saclay.fr}
\address{Laboratoire de Math\'ematiques d'Orsay, B\^atiment 307. Facult\'e des Sciences d'Orsay, Universit\'e Paris-Saclay. F-91405 Orsay Cedex, France}
\author{Yichao Huang}
\email{yichao.huang@helsinki.fi}
\address{University of Helsinki, Department of Mathematics and Statistics, P.O. Box 68, FIN-00014 University of Helsinki, Finland}
\thanks{The authors are indebted to A. Kupiainen, R. Rhodes and V. Vargas for fruitful discussions on Toda theories. Y. Huang is supported by ERC grant QFPROBA, No.741487.}
\begin{document}

\maketitle
\begin{abstract}
Toda conformal field theories are natural generalizations of Liouville conformal field theory that enjoy an enhanced level of symmetry. In Toda conformal field theories this \emph{higher-spin symmetry} can be made explicit, thanks to a path integral formulation of the model based on a Lie algebra structure.
The purpose of the present document is to explain how this higher level of symmetry can manifest itself within the rigorous probabilistic framework introduced by R. Rhodes, V. Vargas and the first author in~\cite{Toda_construction}. One of its features is the existence of holomorphic currents that are introduced via a rigorous derivation of the Miura transformation. More precisely, we prove that the spin-three Ward identities, that encode higher-spin symmetry, hold in the $\mathfrak{sl}_3$ Toda conformal field theory; as an original input we provide explicit expressions for the descendent fields which were left unidentified in the physics literature. This representation of the descendent fields provides a new systematic method to find the degenerate fields of the $\mathfrak{sl}_3$ Toda (and Liouville) conformal field theory, which in turn implies that certain four-point correlation functions are solutions of a hypergeometric differential equation of the third order.
\end{abstract}
 
%%%%%%%%%%%%%%%%%%%%

\section{Introduction}
\subsection{Toda theories and higher-spin symmetry}
Providing a definition to the notion of random surface in the context of two-dimensional quantum gravity has been a seminal topic since the pioneering work of Polyakov~\cite{Pol81} in 1981. In this groundbreaking article were laid the foundations of the \emph{Liouville conformal field theory} (Liouville theory in the sequel), which may be understood as a canonical way of picking at random a geometry on a Riemann surface $\Sigma$ with fixed topology~\cite{Sei90}. More precisely, if $\Sigma$ has Riemannian metric $g$ then such a geometry on $(\Sigma,g)$ may be described via a random map $\varphi:\Sigma\to\R$ that represents the conformal factor of the (conformal metric) $e^{\varphi}g$. The Liouville theory relies on a Lagrangian formulation based on the Liouville action functional
 \begin{equation}\label{action}
 S_{L}(\varphi,g)\coloneqq  \frac{1}{4\pi} \int_{\Sigma}  \Big (  \norm{\partial_g \varphi(x)}^2+QR_g \varphi(x) +4\pi \mu e^{\gamma    \varphi(x)}   \Big)\,{\rm v}_{g}(dx).
\end{equation}
The metric $g$ has scalar curvature $R_g$, gradient $\partial_g$ and volume form ${\rm v}_{g}$; the quantum parameters are the cosmological constant $\mu>0$, the coupling constant $\gamma\in(0,2)$ and the background charge $Q$ defined via $Q=\frac\gamma2+\frac2\gamma$. With a path integral approach based on this Lagrangian and given $\alpha$ a real number, one can explicitly define primary fields $V_\alpha$ ---which are usually referred to as \emph{Vertex Operators}--- and introduce the correlation functions of Vertex Operators via the (formal) expression 
\begin{equation}\label{eq:correl_LCFT}
    \ps{\prod_{k=1}^NV_{\alpha_k}(z_k)}\coloneqq \frac{1}{\mathcal{Z}_g}\int \prod_{k=1}^N e^{\alpha_k\varphi(z_k)}e^{-S_L(\varphi,g)} D\varphi
\end{equation}
for $(z_1,\cdots,z_N)$ distinct elements of $\Sigma$.
In the latter expression $\mathcal{Z}_g$ stands for a normalization constant while the differential element $D\varphi$, purely formal, should be thought of as a \lq\lq Lebesgue measure" over fields $\Sigma\rightarrow\R$.
From now on, the underlying surface $(\Sigma,g)$ will be the sphere $\mathbb{S}^2$ equipped with its standard metric or equivalently (via the stereographic projection) the Riemann sphere $\C\cup\lbrace\infty\rbrace$ (or extended complex plane) equipped with the metric (see Subsection \ref{reminders})
\begin{equation}
    \hat{g}\coloneqq\frac{4}{(1+|z|^2)^2}|dz|^2.
\end{equation}

Following the work of Polyakov, deriving the exact expression of the correlation functions above has become a key issue in the understanding of two-dimensional quantum gravity. 
This question was addressed in a subsequent work~\cite{BPZ} by Belavin, Polyakov and Zamolodchikov, where the authors described a general method for solving \emph{two-dimensional Conformal Field Theories} (2d CFTs in the sequel), based on a systematic way of exploiting the conformal symmetry assumptions through the study of the Virasoro algebra (the symmetry algebra of 2d CFTs). As a feature of this machinery, the conformal symmetry of the model yields the existence of a holomorphic current (of spin $2$), the so-called \emph{stress-energy tensor} usually denoted by $\bm{\mathrm{T}}$. This tensor admits a (formal) expansion in Laurent series which takes the form
\begin{equation}
\bm {\mathrm T}(z_0)=\sum_{n\in\mathbb{Z}}\frac{\bm {\mathrm L}_n(z)}{(z_0-z)^{n+2}}
\end{equation}
around some point $z$, and where the modes $\bm {\mathrm L}_n$ are the generators of the Virasoro algebra, with the commutation relations given by
\begin{equation}
[\bm {\mathrm L}_n,\bm {\mathrm L}_m]=(n-m)\bm {\mathrm L}_{n+m} +\frac{c}{12}(n-1)n(n+1)\delta_{n+m,0}\mathrm{Id},
\end{equation}
with $c$ the central charge of the CFT. In Liouville theory this tensor can (formally) be defined via the variation of the correlation functions with respect to the metric $g$ and thus admits an alternative expression in terms of the conformal factor $\varphi$ (see Equation~\eqref{eq:stress_tensor} below).
One of the key properties of this tensor is its \emph{Operator Product Expansion} (OPE in the sequel) with Vertex Operators, which (again formally) takes the form:
\begin{equation}
    \bm {\mathrm T}(z_0)V_\alpha(z)=\frac{\Delta(\alpha)V_\alpha(z)}{(z_0-z)^2}
    +\frac{\partial_z V_\alpha(z)}{z_0-z}+\text{holomorphic terms}
\end{equation}
as $z_0\rightarrow z$ and implies that the correlation functions of Vertex Operators solve the so-called \emph{Ward identity}:
\begin{equation}
    \ps{\bm {\mathrm T}(z_0)\prod_{k=1}^NV_{\alpha_k}(z_k)}=\sum_{l=1}^N\left(\frac{\Delta_{\alpha_l}}{(z_0-z_l)^2}
    +\frac{\partial_{z_l}}{z_0-z_l}\right)\ps{\prod_{k=1}^NV_{\alpha_k}(z_k)}.
\end{equation}
Here we have introduced $\Delta_{\alpha}\coloneqq\frac{\alpha}{2}\left(Q-\frac{\alpha}2\right)$ the conformal dimension of the Vertex Operator $V_\alpha$. The derivative should be understood as a complex (\textit{i.e.} Wirtinger) derivative\footnote{Unless explicitly stated, holomorphic derivatives will be considered throughout the rest of the document.}, that is $\partial_zf(x,y)=\frac12\left(\partial_x-i\partial_y\right)f(x,y)$; $\partial_{\bar z}$ is defined analogously via $\partial_{\bar z}f(x,y)=\frac12\left(\partial_x+i\partial_y\right)f(x,y)$. When combined with the holomorphicity at infinity of this tensor, by which is meant that $\bm{\mathrm{T}}(z)\sim\frac{1}{z^4}$ as $z\rightarrow\infty$ and which is usually axiomatic in 2d CFTs, the above identity implies that Liouville correlation functions enjoy a property of conformal covariance in the sense that for any M\"obius transform $\psi$ of the complex plane,
 \begin{equation*}
 \ps{\prod_{k=1}^NV_{\alpha_k}(\psi (z_k) )}= \prod_{l=1}^N\norm{\psi'(z_l)}^{-2\Delta_{\alpha_l}}\ps{\prod_{k=1}^NV_{\alpha_k}(z_k)}.
 \end{equation*}

Models with enhanced symmetry, by which is meant that the symmetry algebra of the model contains the Virasoro algebra, appeared shortly after in a work by Zamolodchikov~\cite{Za85} where the author introduced the notion of $W$ (or higher-spin) symmetry, based on extensions of the Virasoro algebra called \emph{$W$-algebras}. Instances of models with this additional level of symmetry emerged in subsequent works, first in~\cite{FaZa} and then in general in~\cite{FaLu}. Toda conformal field theories (Toda theories hereafter) arise as natural extensions of the Liouville theory in the prospect of constructing CFTs with an additional level of symmetry. Similarly to the Liouville action principle, Toda theories can be defined based on a Lagrangian formulation, but the action functional is slightly more complicated since it is based on a structure of Lie algebra. More precisely, for a given finite-dimensional semi-simple and complex Lie algebra $\mathfrak{g}$, the Toda action functional is defined by:
\begin{equation}\label{Toda_action}
    S_{T,\mathfrak{g}}(\varphi,g)\coloneqq  \frac{1}{4\pi} \int_{\Sigma}  \left(  \left\langle\partial_g \varphi(x), \partial_g \varphi(x) \right\rangle   +R_g \langle Q, \varphi(x) \rangle +4\pi \sum_{i=1}^{r} \mu_i e^{\gamma \langle e_i,\varphi(x) \rangle}   \right) {\rm v}_{g}(dx).
\end{equation}
In this expression, the coupling constant $\gamma$ now belongs to $(0,\sqrt 2)$ and the background charge $Q$, as well as the field $\varphi$, are no longer real-valued but rather elements of the Cartan sub-algebra of $\mathfrak{g}$, denoted $\mathfrak{h}$ and viewed as a \textbf{real} vector space. The background charge has an expression similar to the one in Liouville theory\footnote{Note that the expression for $Q$ differs from the one in Liouville theory because of our convention on $\gamma$. The standard one can be recovered by scaling $\gamma$ by a multiplicative factor $\sqrt2$. This scaling is due to the fact that the simple roots are not orthonormal but rather satisfy $\ps{e_i,e_i}=2$.}: 
\begin{equation}
Q\coloneqq\left(\gamma+\frac2\gamma\right)\rho
\end{equation}
where $\rho$ is the \textit{Weyl vector} of $\mathfrak{h}$, defined in Equation~\eqref{eq:def_Weyl} below. The additional terms that appear and that depend on the Lie algebra are a scalar product $\ps{\cdot,\cdot}$ on $\mathfrak{h}$ and the simple roots $(e_i)_{1\leq i\leq r}$ of the Lie algebra $\mathfrak{g}$ with respect to $\mathfrak{h}$. The path integral definition of Toda correlation functions is given, in the same way as Equation~\eqref{eq:correl_LCFT} for Liouville theory, by
\begin{equation}\label{eq:correl_TCFT}
    \ps{\prod_{k=1}^NV_{\alpha_k}(z_k)}\coloneqq \frac{1}{\mathcal{Z}_g}\int \prod_{k=1}^N e^{\ps{\alpha_k,\varphi(z_k)}}e^{-S_{T,\mathfrak{g}}(\varphi,g)} D\varphi
\end{equation}
for $(z_1,\cdots,z_N)$ distinct elements of $\Sigma$ and where the weights $\left(\alpha_1,\cdots,\alpha_N\right)$ are now elements of $\mathfrak{h}^*$ the dual space of $\mathfrak{h}$. More generally the path integral allows to define for suitable functionals $F$ over fields $\Sigma\to\mathfrak{h}$
\begin{equation}\label{eq:path_integral_TCFT}
    \ps{F}\coloneqq \frac{1}{\mathcal{Z}_g}\int \prod_{k=1}^N F(\varphi)e^{-S_{T,\mathfrak{g}}(\varphi,g)} D\varphi.
\end{equation}

A similar reasoning involving OPEs with respect to holomorphic currents should remain valid when the model being studied enjoys \emph{higher-spin symmetry} in addition to the conformal symmetry. This additional level of symmetry also comes with additional \emph{holomorphic currents} that contain information related to this higher-spin symmetry. Indeed in Toda theories there is a family of \emph{higher-spin currents} $\bm{\mathrm{W}}^{(i)}$ (up to $i\leq r$), that are defined via the Miura transformation\footnote{The interested reader may find details on the role of this transformation in the construction of two-dimensional CFTs having higher-spin symmetry for instance in~\cite{FaLu}, where the Miura transformation is used to construct representations of $W$-algebras.}
\begin{equation}\label{Miura}
    \prod_{i=1}^{r+1}\left(\frac {q}{2}\partial+\ps{h_{i},\partial\varphi}\right)\coloneqq \sum_{i=0}^{r}\bm {\mathrm W^{(r-i)}}\left(\frac {q}{2}\partial\right)^i
\end{equation}
where the $(h_i)_{1\leq i\leq r+1}$ are the \emph{fundamental weights in the first fundamental representation} $\pi_1$ of $\mathfrak g$, $q\coloneqq\gamma+\frac2\gamma$ and like before $\partial$ is a holomorphic derivative.

In the sequel, we focus on the study of the $\mathfrak{g}=\mathfrak{sl}_3$ Toda theory. There will be one additional holomorphic current of spin three $\bm {\mathrm W}\coloneqq\bm {\mathrm W^{(3)}}$, which admits the Laurent series expansion
\begin{equation}
\bm {\mathrm W}(z_0)=\sum_{n\in\mathbb{Z}}\frac{\bm {\mathrm W}_n(z)}{(z_0-z)^{n+3}}.
\end{equation}
The $W_3$ algebra is then a \emph{Vertex Operator algebra} generated by the $(\bm {\mathrm L}_n,\bm {\mathrm W}_m)_{n,m\in\mathbb{Z}}$ and with commutation rules given by
\begin{equation}
[\bm {\mathrm L}_m,\bm {\mathrm W}_n]=(2m-n)\bm {\mathrm W}_{m+n}.
\end{equation}
The commutation rules for the $(\bm {\mathrm W}_n)_{n\in\mathbb{Z}}$ is rather complicated and bilinear in the $\left(\bm {\mathrm L}_n\right)_{n\in\mathbb{Z}}$ (see \cite[Equation (2.1)]{BCP}). In particular the $W_3$ algebra is not a Lie algebra. In the present document we will not use this modes representation but rather define the tensor according to the Miura transformation~\eqref{Miura} 
\begin{equation}\label{eq:W_tensor}
       \begin{split}
           \bm {\mathrm W}\coloneqq& -\frac{q^2}{8}\ps{h_2,\partial^3\varphi}+\frac{q}{4}\left(\ps{h_2-h_1,\partial^2\varphi}\ps{h_1,\partial\varphi}+\ps{h_3-h_2,\partial^2\varphi}\ps{h_3,\partial\varphi}\right)\\
    &+\ps{h_1,\partial\varphi}\ps{h_2,\partial\varphi}\ps{h_3,\partial\varphi}.
       \end{split} 
\end{equation}
Similarly the stress-energy tensor will be defined via the expression
\begin{equation}\label{eq:stress_tensor}
\bm {\mathrm T}\coloneqq\ps{Q,\partial^2\varphi}-\ps{\partial\varphi,\partial\varphi}.
\end{equation}
In the $\mathfrak{sl}_3$ Toda theory Vertex Operators $V_\alpha(z)$ still depend on positions $z\in\C$ but the weights $\alpha$ now belong\footnote{The weights rather belong to the dual space $\mathfrak{h}_3^*$ of $\mathfrak{h}_3$, but these two spaces may be identified via Riesz representation theorem.} to $\mathfrak{h}_3$ the Cartan subalgebra of $\mathfrak{sl}_3$. The so-called \emph{WV Operator Product Expansion}, axiomatic in the physics literature, is key in the understanding of the higher-spin symmetry since it provides an explicit expression where all the OPE coefficients are completely determined. More precisely, in the $\mathfrak{sl}_3$ Toda theory, the $WV$ OPE takes the form:
\begin{equation}\label{eq:WVOPEinformal}
    \bm {\mathrm W}(z_0)V_\alpha(z)=\frac{w(\alpha)V_\alpha(z)}{(z_0-z)^3}
    +\frac{\bm {\mathrm W_{-1}}V_\alpha(z)}{(z_0-z)^2}+\frac{\bm {\mathrm W_{-2}}V_\alpha(z)}{z_0-z}+\text{holomorphic terms}
\end{equation}
where $w(\alpha)\in\C$ is the \emph{quantum number} associated to $\bm {\mathrm W}$, and the $\bm{\mathrm{W}_{-i}}V_\alpha(z)$ are the \emph{descendent fields}. These fields are said to be local, in the sense that they should only depend on the weight $\alpha$ and derivatives of the Toda field at the point $z$.

To the best of our knowledge, explicit expressions for $\bm{\mathrm{W}_{-i}}V_\alpha(z)$ remain unknown in the physics literature and should look like ``derivatives in an extra direction'' of the Vertex operator (cf.~\cite[Subsection 8.2]{Watts}). As stressed in~\cite[Chapter 18.2]{FrBz}, \lq\lq It would be interesting to identify the deformation problems related to ... $W$-algebras". Nevertheless, these can be considered as the building blocks for solving the $\mathfrak{sl}_3$ Toda theories since the $T$ and $W$ descendent states are assumed to span the space of states of Toda theories viewed as meromorphic CFTs (see \emph{e.g.}~\cite[Subsection 3.1]{BouSch}). 

Equation~\eqref{eq:WVOPEinformal} formally holds in the sense of operators, but can be rephrased in terms of correlation functions as
\begin{equation}
    \ps{\bm {\mathrm W}(z_0)\prod_{k=1}^NV_{\alpha_k}(z_k)}=\sum_{l=1}^N\left(\frac{w(\alpha_l)}{(z_0-z_l)^3}
    +\frac{\bm {\mathcal W_{-1}^{(l)}}}{(z_0-z_l)^2}+\frac{\bm {\mathcal W_{-2}^{(l)}}}{z_0-z_l}\right)\ps{\prod_{k=1}^N V_{\alpha_k}(z_k)},
\end{equation}
where $\bm {\mathcal W_{-i}^{(l)}}\ps{\prod_{k=1}^NV_{\alpha_k}(z_k)}$ stands for $\ps{\bm {\mathrm W_{-i}}V_{\alpha_l}(z_l)\prod_{k\neq l}V_{\alpha_k}(z_k)}$.
This equality is usually referred to as the \emph{spin-three Ward identity} and is an instance of the higher-spin symmetry enjoyed by the model. This identity is the starting point for a machinery similar to the one developed by Belavin, Polyakov and Zamolodchikov~\cite{BPZ} in the Liouville case, and that exploits extensively the symmetries of the model to compute correlation functions. More precisely when certain Vertex Operators $V_{\alpha_1}$, dubbed \emph{degenerate fields}, are inserted within a correlation function some quantities such as $\bm {\mathcal W_{-1}^{(1)}}\ps{\prod_{k=1}^NV_{\alpha_k}(z_k)}$ may be expressed in terms of derivatives of the correlation function. Combined with the $WV$ OPE~\eqref{eq:WVOPEinformal} this implies that certain correlation functions are solutions of a differential equation of the third order. This method has been investigated by Fateev and Litvinov in~\cite{FaLi0} and subsequent works and allowed them to predict the value of certain three-point correlation functions in the general $\mathfrak{sl}_n$ Toda theories. 

%%%%%%%%%%%%%%%%%% 
\subsection{A probabilistic framework for Toda theories}
The topic of two-dimensional conformal field theory has been recently thoroughly investigated by the mathematics community, with several programs aiming to interpret this notion thanks to methods ranging from probability to representation theory (\emph{e.g.} Vertex Operator Algebras, see ~\cite{Borcherds, FLM, FrBz}). Following the groundbreaking work by Schramm~\cite{schramm} in which were introduced the so-called \emph{Schramm-Loewner Evolutions}, several viewpoints within the probabilistic community have emerged in an attempt of describing two-dimensional CFTs through the lens of random geometry. These approaches often rely on the study of the scaling limit of certain random planar maps, either via the proof of existence of a certain limiting object (a \emph{Brownian surface} in~\cite{LeG13,Mie13}) or by providing an explicit construction in the continuum of \emph{Liouville Quantum Gravity surfaces} (see \emph{e.g.}~\cite{DS10,DMS14,DDDF,DFGPS,GM20}). Closer to the language developed by Belavin, Polyakov and Zamolodchikov~\cite{BPZ} is the program initiated by David, Kupiainen, Rhodes and Vargas in~\cite{DKRV} aiming to provide a mathematically rigorous meaning to the machinery introduced in~\cite{BPZ} and mostly developed in the physics literature~\cite{Teschner}. Among their achievements are a proof~\cite{KRV_DOZZ} of the \emph{DOZZ formula} (for the Liouville three-point constant) as well as a rigorous justification~\cite{GKRV} of the recursive procedure known as \emph{conformal bootstrap}.

Following these recent developments and based on the formalism of~\cite{DKRV}, a rigorous probabilistic construction for the correlation functions of simply-laced Toda theories is proposed in~\cite{Toda_construction}, in which the authors make sense of the path integral approach to Toda theories~\eqref{eq:path_integral_TCFT} in the realm of probability theory. To do so were introduced the Toda correlation functions of Vertex Operators (and more generally the law of the quantum Toda field) in a fully rigourous mathematical framework based on two key probabilistic objects: Gaussian Free Fields (GFFs in the sequel) and Gaussian Multiplicative Chaos (GMC in the sequel). We present this probabilistic framework in Subsection~\ref{subsec:proba}. This construction relies on an appropriate regularization of the Vertex Operators $V_{\alpha,\eps}(z)$ and the investigation of the existence of the quantity
\[
\lim\limits_{\eps\rightarrow0}\ps{\prod_{k=1}^NV_{\alpha_k,\eps}(z_k)},
\]
based on a probabilistic interpretation of Equation~\eqref{eq:correl_TCFT}.
The statement of Theorem~\ref{propcorrel1} (adapted from~\cite[Theorem 3.1]{Toda_construction}) ensures that this construction of the correlation functions makes sense provided that some assumptions on the Vertex Operators, dubbed \emph{Seiberg bounds}~\cite{Sei90}, hold (see Equation~\eqref{seiberg_bounds} from Theorem~\ref{propcorrel1}).

Under this framework, the quantities that appear in the above Ward identities are defined via a similar regularization procedure (involving an additional regularization of the correlation functions via a parameter $\delta$) by setting 
\begin{equation}
\begin{split}
\ps{\bm {\mathrm T}(z_0)\prod_{k=1}^NV_{\alpha_k}(z_k)}&\coloneqq\lim\limits_{\delta,\eps\rightarrow0}\ps{\bm {\mathrm T}_\eps(z_0)\prod_{k=1}^NV_{\alpha_k,\eps}(z_k)}_\delta,\\
    \ps{\bm {\mathrm W}(z_0)\prod_{k=1}^NV_{\alpha_k}(z_k)}&\coloneqq\lim\limits_{\delta,\eps\rightarrow0}\ps{\bm {\mathrm W}_\eps(z_0)\prod_{k=1}^NV_{\alpha_k,\eps}(z_k)}_\delta.
\end{split}
\end{equation}
Similarly the descendent states will be defined via
\begin{equation}
        \bm {\mathcal W_{-i}^{(l)}}\ps{\prod_{k=1}^NV_{\alpha_k}(z_k)}\coloneqq\lim\limits_{\delta,\eps\rightarrow0}\ps{\bm {\mathrm W_{-i,\eps}}V_{\alpha_l,\eps}(z_l)\prod_{k\neq l}V_{\alpha_k,\eps}(z_k)}_\delta,\quad i=1,2.
\end{equation}
The precise definitions of these (regularized) expressions will be made explicit in Subsection~\ref{sub_currents}.

We will rely on this construction and focus on the case where the underlying Lie algebra corresponds to the special linear algebra $\mathfrak{sl}_3$, which is the simplest instance of Toda theory for which higher-spin symmetry manifests itself. Our main goal in the present document is to build a bridge between the ideas developed in the physics literature and the ones that come from the probabilistic viewpoint by showing that these two frameworks are consistent. To do so, we recover within the probabilistic formalism some of the axioms used in the standard construction of the $\mathfrak{sl}_3$ Toda theory on the Riemann sphere, by confirming that the OPEs between currents (defined via the Miura transformation~\eqref{Miura}) and Vertex Operators are indeed true in the sense that the Ward identities \eqref{Ward_stress} and \eqref{Ward_higher} do hold.
\begin{theorem}\label{main_theorem}
Let $\gamma\in(0,\sqrt 2)$, $(z_k)_{1\leq k\leq N}$ be distinct complex numbers and assume that the weights $(\alpha_k)_{1\leq k\leq N}\in\left(\mathfrak{h}_3^*\right)^N$ are such that the Seiberg bounds~\eqref{seiberg_bounds} hold. Let $z_0$ be in $\C\setminus\lbrace z_1,\cdots,z_N\rbrace$.
Then the \emph{spin-two} Ward identity holds:
\begin{equation}\label{Ward_stress}
    \ps{\bm {\mathrm T}(z_0)\prod_{k=1}^{N}V_{\alpha_k}(z_k)}=\sum_{l=1}^{N}\left(\frac{\Delta_{\alpha_l}}{(z_0-z_l)^2}
    +\frac{\partial_{z_l}}{z_0-z_l}\right)\ps{\prod_{k=1}^{N}V_{\alpha_k}(z_k)}
\end{equation}
with the conformal weight $\Delta_{\alpha}\coloneqq\ps{\frac{\alpha}{2},Q-\frac{\alpha}{2}}$. Similarly the \emph{spin-three} Ward identity holds in the form
\begin{equation}\label{Ward_higher}
    \ps{\bm {\mathrm W}(z_0)\prod_{k=1}^NV_{\alpha_k}(z_k)}=-\frac18\sum_{l=1}^N\left(\frac{w(\alpha_l)}{(z_0-z_l)^3}
    +\frac{\bm {\mathcal W_{-1}^{(l)}}}{(z_0-z_l)^2}+\frac{\bm {\mathcal W_{-2}^{(l)}}}{z_0-z_l}\right)\ps{\prod_{k=1}^{N}V_{\alpha_k}(z_k)}
\end{equation}
where the quantum number associated to the current $\bm {\mathrm W}$ is given by
\begin{equation}
    w(\alpha)\coloneqq \ps{\alpha-Q,h_1}\ps{\alpha-Q,h_2}\ps{\alpha-Q,h_3}
\end{equation}
and the $\bm{\mathcal W_{-i}^{(l)}}\ps{\prod_{k=1}^{N}V_{\alpha_k}(z_k)}$, with $i=1,2$, correspond to the well-defined limit
\begin{equation}
    \bm {\mathcal W_{-i}^{(l)}}\ps{\prod_{k=1}^NV_{\alpha_k}(z_k)}\coloneqq\lim\limits_{\delta,\eps\rightarrow0}\ps{\bm{\mathrm W_{-i,\eps}}(z_l,\alpha_l)\prod_{k=1}^NV_{\alpha_k,\eps}(z_k)}_\delta,
\end{equation}
where we have set:
\begin{equation}\label{eq:descendent}
\begin{split}
    \bm {\mathrm W_{-1,\eps}}(z,\alpha)&\coloneqq -qB(\alpha,\partial\varphi_\eps(z))-2C(\alpha,\alpha,\partial\varphi_\eps(z)),\\
    \bm {\mathrm W_{-2,\eps}}(z,\alpha)&\coloneqq q\left(B(\partial^2\varphi_\eps( z),\alpha)-B(\alpha,\partial^2\varphi_\eps(z))\right)\\
    &\quad -2C(\alpha,\alpha,\partial^2\varphi_\eps(z)) +4C(\alpha,\partial\varphi_\eps(z),\partial\varphi_\eps(z)).
\end{split}
\end{equation}
In the above expressions, $\varphi_\eps$ denotes the regularized field (see Equation~\eqref{eqdef:field}), while the $(h_i)_{1\leq i\leq 3}$ and $B$, $C$ are defined in Equations~\eqref{eq:definition_hi}, ~\eqref{def:B} and~\eqref{def:C}. The very last term in Equation~\eqref{eq:descendent}, quadratic in $\partial\varphi_\eps$, should be understood as a Wick product.
\end{theorem}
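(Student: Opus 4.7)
The overall strategy is to derive both identities by Gaussian integration by parts (IBP) applied to the regularized currents inside the correlator. The central tool is a Cameron--Martin/Girsanov shift: since each $V_{\alpha_k,\eps}$ is an exponential of a linear functional of the underlying GFF, inserting $\prod_k V_{\alpha_k,\eps}(z_k)$ is equivalent, up to an explicit multiplicative renormalization, to shifting the field by $\varphi\mapsto \varphi+\sum_{l=1}^N G_\eps(\cdot,z_l)\alpha_l$. Under this shift, each derivative transforms as
\[
\partial^k\varphi_\eps(z_0)\longmapsto \partial^k\varphi_\eps(z_0)+\sum_{l=1}^N\partial_{z_0}^k G_\eps(z_0,z_l)\,\alpha_l.
\]
Expanding the (Wick) polynomials $\bm{\mathrm{T}}_\eps(z_0)$ and $\bm{\mathrm{W}}_\eps(z_0)$ after substitution produces a sum of terms of decreasing order in the residual random field $\partial^k\varphi_\eps(z_0)$: a purely deterministic piece, a piece linear in $\partial\varphi_\eps(z_0)$, and (for $\bm{\mathrm W}$) a piece quadratic in $\partial\varphi_\eps(z_0)$. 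The plan is to identify the deterministic pieces with the polar part of the Ward identity, and to reduce the remaining random pieces by a second Gaussian IBP against the GMC potential $\sum_i \mu_i\int e^{\gamma\ps{e_i,\varphi_\eps}}$.

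For the spin-two identity, the deterministic piece produced from $\bm{\mathrm{T}}_\eps$ is $\sum_l \ps{Q,\alpha_l}\partial^2_{z_0}G_\eps(z_0,z_l) - \sum_{l,l'}\ps{\alpha_l,\alpha_{l'}}\partial_{z_0}G_\eps(z_0,z_l)\partial_{z_0}G_\eps(z_0,z_{l'})$; via the short-distance behavior $G(z_0,z_l)=-\log|z_0-z_l|+O(1)$, the diagonal $l=l'$ contributions collapse to $\sum_l \Delta_{\alpha_l}/(z_0-z_l)^2$, whereas the off-diagonal $l\neq l'$ contributions yield double poles of the form $\ps{\alpha_l,\alpha_{l'}}/\big((z_0-z_l)(z_0-z_{l'})\big)$. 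The terms linear in $\partial\varphi_\eps(z_0)$ vanish against the Gaussian law except for their contraction with the GMC exponentials; after Gaussian IBP this produces contact integrals $\int (z_0-x)^{-1}\ps{e^{\gamma\ps{e_i,\varphi(x)}}\prod_k V_{\alpha_k}}\dd x$ which, combined with the off-diagonal double poles via the KPZ-type identity for $\partial_{z_l}\ps{\prod V_{\alpha_k}}$ (itself obtained by differentiating the regularized correlator in $z_l$ and applying IBP once), reconstruct exactly the derivative terms $\sum_l \partial_{z_l}/(z_0-z_l)$ of~\eqref{Ward_stress}. Passage to $\eps,\delta\to 0$ is controlled by the uniform GMC moment bounds of~\cite{Toda_construction}.

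The spin-three Ward identity follows the same template, but with heavier book-keeping, since $\bm{\mathrm{W}}_\eps$ is a Wick cubic polynomial in the $\partial^k\varphi_\eps$. The purely deterministic contribution, obtained by evaluating the Miura polynomial~\eqref{eq:W_tensor} on the classical shift, must reduce algebraically to $-\tfrac18\sum_l w(\alpha_l)/(z_0-z_l)^3$ plus lower-order singularities; the key identity here is $\prod_{i=1}^3\ps{h_i,\alpha-Q}=w(\alpha)$ together with the relation $h_1+h_2+h_3=0$ satisfied by the fundamental weights. The linear and quadratic random pieces, after a further Gaussian IBP against the GMC, contribute residues at $(z_0-z_l)^{-2}$ and $(z_0-z_l)^{-1}$ which, after regrouping in terms of the symmetric forms $B,C$ evaluated on derivatives of $\varphi_\eps$ at $z_l$, are exactly the local operators $\bm{\mathrm{W}_{-1,\eps}}(z_l,\alpha_l)$ and $\bm{\mathrm{W}_{-2,\eps}}(z_l,\alpha_l)$ displayed in~\eqref{eq:descendent}.

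The main obstacle is twofold. First, one must tame the Wick-cubic renormalization of $\bm{\mathrm{W}}_\eps$ simultaneously with the $\delta$-regularization of the correlator, so that the joint limit $\eps,\delta\to 0$ exists and no anomalous constant survives; this is where the Wick convention of the last term in~\eqref{eq:descendent} is essential, and the spin-two computation serves as a warm-up that fixes the normalization. Second, the algebraic identification of the descendent operators --- the original input of the paper relative to the physics literature --- requires matching, term by term, each power of $(z_0-z_l)^{-1}$ produced by the Taylor expansion of the cubic Miura polynomial near $z_l$ against the symmetric forms $B,C$; the antisymmetry relations among the differences $h_i-h_j$ appearing in~\eqref{eq:W_tensor} play a decisive role in collapsing the intermediate expressions into the compact form given in~\eqref{eq:descendent}.
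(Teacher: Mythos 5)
Your algebraic skeleton coincides with the paper's: the Cameron--Martin shift you invoke is equivalent to the repeated Gaussian integration by parts of Proposition~\ref{GaussianIPP}, and the identification of the deterministic piece with the polar part (diagonal terms giving $\Delta_{\alpha_l}/(z_0-z_l)^2$, off-diagonal double poles recombined with $\partial_{z_l}\ps{\V}$ by symmetrization) is exactly what the paper does. The genuine gap is in the passage to the limit $\eps,\delta\to 0$. After the integration by parts, both sides of the identity contain contact integrals of the form $\int_\C\theta_\delta(z_0-x)(z_0-x)^{-p}\ps{V_{\gamma e_i,\eps}(x)\V_\eps}_\delta\,d^2x$ with $p=2,3$ (and two- and three-fold analogues). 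These are \emph{not} absolutely convergent as $\delta\to0$: uniform GMC moment bounds give only that the correlation function in the integrand is bounded near $x=z_0$, which is insufficient for $p\geq 2$. Their conditional convergence --- and, more importantly, the vanishing of the difference between the two sides --- rests on recognizing specific combinations of integrands as exact $x$-derivatives of $\ps{V_{\gamma e_i,\eps}(x)\V_\eps}_\delta$, integrating by parts via Stokes' formula so that only terms carrying $\partial_x\theta_\delta(z_0-x)$ survive, and then killing those by a Taylor expansion of the correlation function at $z_0$ combined with the angular cancellation $\int\partial_x\theta_1(x)\,x^{-p}x^k\,d^2x=0$ coming from the rotation invariance of the cutoff. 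This in turn requires knowing beforehand that $x\mapsto\ps{V_{\gamma e_i}(x)\V}$ is $C^1$ (and $C^2$ for the spin-three case), which is Proposition~\ref{regularity} and is itself nontrivial. None of this appears in your outline, and it is the technical heart of the argument; "controlled by the uniform GMC moment bounds" is not a correct description of this step.

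A second omission: the theorem also asserts the \emph{existence} of the limits defining $\bm{\mathcal W}_{-i}^{(l)}$, which for $i=2$ again needs the symmetrization machinery together with the second-order differentiability of the correlation functions; you do not address it. Finally, for the spin-three identity the "term-by-term matching" you defer to requires concrete algebraic identities --- for instance $q\left(B(e_i,\alpha)-B(\alpha,e_i)\right)+2C(\alpha,\gamma e_i-\alpha,e_i)=\frac4\gamma\ps{h_2,e_i}\ps{\omega_{3-i},\alpha}\left(1+\frac12\ps{\alpha,\gamma e_i}\right)$ --- which are precisely what allow the singular one-, two- and three-fold GMC integrals to be rewritten as total derivatives and hence cancelled; without exhibiting them the quadratic and cubic contact terms do not visibly close up into the operators $\bm{\mathrm W_{-1,\eps}}$, $\bm{\mathrm W_{-2,\eps}}$ of Equation~\eqref{eq:descendent}.
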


This identity~\eqref{Ward_higher} translates in a probabilistic language the $WV$ OPE and associated Ward identity present in the physics literature\footnote{There is a difference here between the form of equation~\eqref{Ward_higher} and what is commonly written down in the physics literature. Indeed it is standard (see \emph{e.g.} the discussion below Equation (2.8) in~\cite{Za85}) to scale by a multiplicative factor $i\sqrt{\frac{48}{22+5c}}$ the expression of $\bm{\mathrm W}$, with $c$ the central charge of the theory -- and by doing so the expression of the quantum numbers $w(\alpha)$ and the descendent operators -- in order for the $WW$ OPE (the contraction of two $W$ currents) to be written down in an elegant fashion.}, see \emph{e.g.} Equations (2.3) and (2.4) in~\cite{FaLi1}. We stress that, as an original input, our probabilistic framework provides an explicit expression for the $W$-descendent fields, which take the form
\begin{equation}
\bm {\mathrm W_{-i}}V_{\alpha}(z)=\bm {\mathrm W_{-i}}(z,\alpha)\times V_{\alpha}(z).
\end{equation}
We believe these explicit expressions for the descendent fields will have several applications in the understanding of Toda theories. Some basic consequences are highlighted in Section~\ref{section:BPZ}. For instance, this representation allows to find so-called \emph{degenerate fields} (see Propositions~\ref{degenerate_1} and~\ref{degenerate_2}), which are the fields that meet the requirements for the BPZ-type equation~\eqref{eq:BPZ_correl} below to hold.

Our second result can be understood as an additional manifestation of the $W$-symmetry of the model by showing the \emph{global Ward identities} (usually axiomatic in the physics literature and equivalent to the \lq\lq holomorphicity at infinity" of the spin-three current):
\begin{theorem}
In the setting of Theorem~\ref{main_theorem}, the global Ward identities hold for $0\leq n\leq 4$:
\begin{equation}\label{global_Ward}
    \sum_{l=1}^N \left(z_l^n\bm {\mathcal W_{-2}^{(l)}}+nz_l^{n-1}\bm {\mathcal W_{-1}^{(l)}}+\frac{n(n-1)}{2}z_l^{n-2}w(\alpha_l)\right)\ps{\prod_{k=1}^{N}V_{\alpha_k}(z_k)}=0.
\end{equation}
\end{theorem}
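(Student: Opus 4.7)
The plan is to deduce~\eqref{global_Ward} from the local spin-three Ward identity~\eqref{Ward_higher} just established in Theorem~\ref{main_theorem}, via a standard residue theorem on the Riemann sphere. Setting $F(z_0) := \ps{\bm{\mathrm W}(z_0)\prod_{k=1}^N V_{\alpha_k}(z_k)}$, the identity~\eqref{Ward_higher} writes $F$ as an explicit rational function of $z_0\in\C$ whose only singularities are poles of order at most three at the $z_l$, with triple-, double- and simple-pole coefficients equal to $-\tfrac18 w(\alpha_l)\ps{\prod V}$, $-\tfrac18\bm{\mathcal W_{-1}^{(l)}}\ps{\prod V}$, $-\tfrac18\bm{\mathcal W_{-2}^{(l)}}\ps{\prod V}$ respectively. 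Provided one shows $F(z_0)=O(z_0^{-6})$ as $z_0\to\infty$, then for every $n\in\{0,1,2,3,4\}$ the function $z_0^n F(z_0)$ is $O(z_0^{-2})$ at infinity, so the residue theorem on $\mathbb{S}^2$ forces $\sum_l\mathrm{Res}_{z_0=z_l} z_0^n F(z_0)=0$. Expanding $\mathrm{Res}_{z_0=z_l} \tfrac{z_0^n}{(z_0-z_l)^{k+1}}=\binom{n}{k} z_l^{n-k}$ for $k=0,1,2$ recovers precisely the left-hand side of~\eqref{global_Ward}.

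The content of the argument therefore reduces to establishing the decay $F(z_0)=O(z_0^{-6})$ at infinity, which is the probabilistic translation of the axiomatic statement that $\bm{\mathrm W}$ is a primary field of conformal weight three on $(\mathbb{S}^2,\hat g)$. A direct approach is asymptotic analysis at the regularized level: via Girsanov's theorem, the normalized correlation $F(z_0)/\ps{\prod V_{\alpha_k}}$ decomposes as (i) the Miura expression~\eqref{eq:W_tensor} evaluated on the deterministic shift $m(z):=\sum_l \alpha_l G_{\hat g}(z,z_l)$, plus (ii) a Gaussian expectation containing the GMC measures produced by the Toda potential. Since the spherical Green function $G_{\hat g}$ extends smoothly across $z_0=\infty$ in the chart $\tilde z_0=1/z_0$ (the short-distance logarithm and the background-curvature logarithm cancel), one obtains uniformly $\partial^j_{z_0}G_{\hat g}(z_0,w)=O(z_0^{-j-1})$. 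This bound already produces the desired $O(z_0^{-6})$ decay for the cubic contribution $\ps{h_1,\partial\varphi}\ps{h_2,\partial\varphi}\ps{h_3,\partial\varphi}$ to (i).

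The main obstacle is to show that the remaining pieces of~\eqref{eq:W_tensor}, of naive order $z_0^{-4}$ (from the linear term $-\tfrac{q^2}{8}\ps{h_2,\partial^3\varphi}$) and $z_0^{-5}$ (from the bilinear terms with prefactor $\tfrac{q}{4}$), also combine to order $O(z_0^{-6})$ once (i) and (ii) are summed. The expected cancellations rely on an interplay between the algebraic identity $h_1+h_2+h_3=0$ satisfied by the fundamental weights of $\mathfrak{sl}_3$, the specific form of the background charge $Q=q\rho$, the subleading terms in the expansion of $G_{\hat g}$ at infinity, and the Wick renormalization underlying~\eqref{eq:W_tensor}. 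A more structural alternative would be to invoke the M\"obius covariance of Toda correlations (itself following from the spin-two Ward identity~\eqref{Ward_stress} combined with the analogous global identities for $\bm{\mathrm T}$, which are obtained from the very same residue argument applied with the weaker decay $O(z_0^{-4})$) together with the primary-field transformation law of $\bm{\mathrm W}$ under a M\"obius map $\psi(z)=1/(z-z^\ast)$ sending $\infty$ to a finite point; the resulting identity $F(z_0)=(\psi'(z_0))^3\prod_k (\psi'(z_k))^{\Delta_{\alpha_k}}\tilde F(\psi(z_0))$ with $\tilde F$ smooth at $\psi(\infty)=0$ and $(\psi'(z_0))^3=O(z_0^{-6})$ would yield the decay directly. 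Either way, the decay at infinity is the key technical input; once secured, the residue argument of the first paragraph immediately concludes the proof of~\eqref{global_Ward} for $0\le n\le 4$.
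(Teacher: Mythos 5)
Your overall skeleton is exactly the paper's: the global identities follow from the local spin-three Ward identity~\eqref{Ward_higher} once one knows $F(z_0)=O(z_0^{-6})$ at infinity, by extracting the coefficients of $z_0^{-p}$ for $1\le p\le 5$. (The paper does this by a direct expansion of the right-hand side of~\eqref{Ward_higher} in powers of $1/z_0$ rather than by the residue theorem on $\mathbb{S}^2$, but the two packagings are equivalent since~\eqref{Ward_higher} exhibits $F$ as a rational function of $z_0$, and your residue computation $\mathrm{Res}_{z_0=z_l}\,z_0^n(z_0-z_l)^{-k-1}=\binom{n}{k}z_l^{n-k}$ correctly reproduces the left-hand side of~\eqref{global_Ward}.) You have also correctly identified that the entire content of the proof is the decay at infinity.

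That, however, is precisely where your proposal has a gap: neither of your two routes to $F(z_0)=O(z_0^{-6})$ is actually carried out. The direct Girsanov route is left with the cancellation of the naive $z_0^{-4}$ and $z_0^{-5}$ contributions explicitly unresolved, and these cancellations are not automatic --- they are exactly the statement that the specific combination~\eqref{eq:W_tensor} is a weight-three covariant tensor, which is what needs proving. Your ``structural alternative'' is the paper's actual argument, but you \emph{invoke} the primary-field transformation law of $\bm{\mathrm W}$ under M\"obius maps, whereas in the probabilistic framework this law is not axiomatic and must be derived from the Miura definition. The paper does this in Proposition~\ref{holo_inf}: one computes by the chain rule how $\partial^j(\varphi_\eps\circ\psi)$ and $\partial^j\ln\norm{\psi'}$ transform, substitutes into the Miura expression evaluated on $\varphi_\eps\circ\psi+Q\ln\norm{\psi'}$, and checks that all inhomogeneous terms cancel (using $h_1+h_2+h_3=0$, the value $Q=q\rho$, and the $-\tfrac q8\partial\bm{\mathrm T}$ shift built into the definition of $\bm{\mathrm W}$), leaving $(\psi')^3\,\bm{\mathrm W}(\varphi_\eps)\circ\psi$; combining this with the change-of-variables formula \cite[Equation (3.8)]{Toda_construction} for the correlation functions and taking $\psi(z)=1/z$ yields~\eqref{eq:asymptot}. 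To complete your proof you must supply this computation (or the equivalent cancellation analysis in your first route); as written, the key technical input is assumed rather than proved.
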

A similar statement holds for the stress-energy tensor $\bm{\mathrm T}$ and amounts to conformal covariance of the Toda correlation functions in the sense of~\cite[Theorem 3.1]{Toda_construction}. Its form is similar to the one in Liouville theory~\cite[Equation (A.6)]{BPZ}. We stress that despite our probabilistic representation, it is far from obvious to prove identites~\eqref{global_Ward} via direct computations, nor by using conserved quantities. The approach we follow rather relies on the fact that $W$ behaves like a covariant tensor of the third order (we explain this in Subsection~\ref{sub:global_Ward}).

Our last result is concerned with the integrability of Toda theories. It claims that certain four-point correlation functions are solutions of a differential equation very similar to the one encountered in the case of Liouville (see \cite[Equation (4.26)]{KRV_loc}). Such a differential equation is the starting point for computing the exact value of the three-point correlation function in Liouville theory~\cite{KRV_DOZZ}, it is therefore very tempting to think that such a program would work for the $\mathfrak{sl}_3$ Toda theory.
\begin{theorem}\label{thm:BPZ}
Let $\chi\in\lbrace\gamma,\frac{2}{\gamma}\rbrace$ and take $\kappa\in\R$, $\alpha_0$ and $\alpha_\infty$ such that $\bm\alpha=(-\chi h_1,\alpha_0,-\kappa h_3,\alpha_\infty)$ satisfies the Seiberg bounds~\eqref{seiberg_bounds}. Then the four-point correlation function \\
${\ps{V_{-\chi h_1}(z)V_{\alpha_0}(0)V_{-\kappa h_3}(1)V_{\alpha_\infty}(\infty)}}$ is real analytic on $\C\setminus\{0,1\}$ and can be written as 
\begin{equation}\label{eq:BPZ_correl}
    \ps{V_{-\chi h_1}(z)V_{\alpha_0}(0)V_{-\kappa h_3}(1)V_{\alpha_\infty}(\infty)}=\norm{z}^{\chi\ps{h_1,\alpha_0}}\norm{z-1}^{\frac{\chi\kappa}3}\mathcal H(z),
\end{equation}
with $\mathcal H$ solution of the hypergeometric differential equation of the third order
\begin{equation}\label{eq:BPZ_hyper}
    \begin{split}
    &\Big[z\left(A_1+z\partial_z\right)\left(A_2+z\partial_z\right)\left(A_3+z\partial_z\right)-\left(B_1-1+z\partial_z\right)\left(B_2-1+z\partial_z\right)z\partial_z\Big]\mathcal H=0.
    \end{split}
\end{equation}
Here we have introduced  
\begin{equation}
\begin{split}
    A_i&:=\frac{\chi}2\ps{\alpha_0-\kappa h_3-\chi h_1-Q,h_1}+\frac{\chi}2\ps{\alpha_\infty-Q,h_i}\\
    B_i&:=1+\frac{\chi}2\ps{\alpha_0-Q,h_1-h_{i+1}}.
\end{split}
\end{equation}
\end{theorem}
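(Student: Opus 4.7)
The strategy is to combine the spin-three Ward identity~\eqref{Ward_higher} with the $W$-null-state identities satisfied by the degenerate weight $-\chi h_1$, as provided by Propositions~\ref{degenerate_1} and~\ref{degenerate_2}. Write $\mathcal{G}(z)\coloneqq\ps{V_{-\chi h_1}(z)V_{\alpha_0}(0)V_{-\kappa h_3}(1)V_{\alpha_\infty}(\infty)}$. The aforementioned propositions express $\bm{\mathcal W}^{(1)}_{-1}\mathcal{G}$ and $\bm{\mathcal W}^{(1)}_{-2}\mathcal{G}$ as explicit linear combinations of $\partial_z\mathcal{G}$, $\partial_z^{2}\mathcal{G}$, $\partial_z^{3}\mathcal{G}$ and $\mathcal{G}$ itself, with coefficients depending on $\chi$, $\alpha_0$, $\kappa$ and $\alpha_\infty$. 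Substituting these into~\eqref{Ward_higher} and isolating the singular part of $\bm{\mathcal W}^{(1)}_{-2}/(z_0-z_1)$ at $z_0\to z$ converts the local Ward identity at $z$ into a third-order differential relation between the $\partial_z^{k}\mathcal{G}$ and the descendent data at the three other insertions.

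To close this relation into an ODE in $z$ alone, one must eliminate the remaining unknowns $\partial_{z_l}\mathcal{G}$ and $\bm{\mathcal W}^{(l)}_{-i}\mathcal{G}$ for $l=2,3,4$. I would solve, for the $\bm{\mathcal W}^{(l)}_{-i}$, the linear system formed by the global spin-three Ward identities~\eqref{global_Ward} for $n\in\lbrace 0,\ldots,4\rbrace$, viewed as equations in the six unknowns $\bm{\mathcal W}^{(l)}_{-1}\mathcal{G}$, $\bm{\mathcal W}^{(l)}_{-2}\mathcal{G}$, $l=2,3,4$; the missing equation is provided by the non-singular content of~\eqref{Ward_higher} at $z_0=\infty$ once the contribution at $z$ has already been expressed in terms of $z$-derivatives. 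In parallel, the global spin-two Ward identities---equivalent to the M\"obius covariance established in~\cite[Theorem 3.1]{Toda_construction}---pin down $\partial_{z_l}\mathcal{G}$ at $l=2,3,4$ in terms of $\partial_z\mathcal{G}$, $\mathcal{G}$ and the conformal weights $\Delta_{\alpha_k}$. Plugging everything back into~\eqref{Ward_higher} and matching the residue at $z_0=z$ yields a closed third-order linear ODE for $\mathcal{G}$ on $\C\setminus\lbrace 0,1\rbrace$.

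Real-analyticity of $\mathcal{G}$ on $\C\setminus\lbrace 0,1\rbrace$ under the Seiberg bounds is inherited from the Gaussian multiplicative chaos construction of Theorem~\ref{propcorrel1}, for which the dependence on the insertion positions is smooth (in fact real-analytic) throughout the interior of the admissible region. Factoring out the Liouville-type prefactor $\norm{z}^{\chi\ps{h_1,\alpha_0}}\norm{z-1}^{\chi\kappa/3}$---dictated by the fusion of $V_{-\chi h_1}(z)$ with $V_{\alpha_0}(0)$ and with $V_{-\kappa h_3}(1)$, and which cancels the regular-singular behaviour of the ODE at $0$ and $1$---reduces it to the canonical hypergeometric form~\eqref{eq:BPZ_hyper}. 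The explicit values of the parameters $A_i$ and $B_i$ are then identified by comparing leading and subleading powers of $z$ in the expansions of $\mathcal H$ around $0$ and $\infty$, using the definitions of the conformal dimension $\Delta_{\alpha}$ and of the quantum number $w(\alpha)$.

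The main difficulty lies in securing the degenerate relations underlying Propositions~\ref{degenerate_1} and~\ref{degenerate_2}, because the $W_{3}$-algebra null-state identities must be derived \emph{directly} from the Wick-product formulae~\eqref{eq:descendent} for $\bm{\mathrm W}_{-1,\eps}$ and $\bm{\mathrm W}_{-2,\eps}$, rather than from a module-theoretic argument. This requires a careful tracking of the renormalisation constants that arise when the field derivatives $\partial\varphi_\eps(z)$ are multiplied with $V_{-\chi h_1,\eps}(z)$, and the cancellations one needs rely on the specific algebraic identities satisfied by the scalar products $\ps{-\chi h_1,h_j}$ at $\chi=\gamma$ and $\chi=2/\gamma$. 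Once these cancellations are in hand, the assembly of the ODE and its reduction to the hypergeometric form is a finite, essentially mechanical calculation, parallel in spirit to the derivation of the BPZ equation for Liouville four-point functions containing a degenerate insertion.
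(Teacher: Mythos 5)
Your overall architecture (degenerate relations at the fully degenerate insertion, global Ward identities to eliminate the descendents at the other insertions, conformal covariance to eliminate the $\partial_{z_l}$) matches the paper's Section~\ref{section:BPZ}, but the step that actually closes the system is wrong. After using the five global spin-three identities~\eqref{global_Ward} ($n=0,\dots,4$) you are left with six unknowns $\bm{\mathcal W_{-i}^{(l)}}\mathcal G$, $i=1,2$, $l=2,3,4$, and you propose to obtain the sixth equation from ``the non-singular content of~\eqref{Ward_higher} at $z_0=\infty$''. That equation does not exist as an independent relation: the five identities~\eqref{global_Ward} are \emph{exactly} the vanishing of the coefficients of $z_0^{-1},\dots,z_0^{-5}$ forced by $\bm{\mathrm W}(z_0)\sim z_0^{-6}$, and the $z_0^{-6}$ coefficient merely reproduces the known leading asymptotic (this is the content of the remark following Proposition~\ref{holo_inf}), so holomorphicity at infinity is already exhausted. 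The sixth relation in the paper comes from a hypothesis you never use: the insertion at $1$ carries weight $-\kappa h_3=\kappa\omega_2$, which is \emph{semi-degenerate} at level one, so Proposition~\ref{degenerate_1} converts $\bm{\mathcal W_{-1}^{(1)}}$ at the point $1$ into $(q-2\ps{h_1,\kappa\omega_2})\,\partial_{z_1}$ and then, via M\"obius covariance, into $z\partial_z$ plus constants. Without this semi-degeneracy the system cannot be closed, which is precisely why the theorem is stated for $V_{-\kappa h_3}(1)$ and not for a generic third weight.

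Two secondary points. First, the third-order derivative cannot be produced by ``isolating the singular part'' of~\eqref{Ward_higher} near $z$: the singular coefficients only carry $w$, $\bm{\mathcal W_{-1}}$ and $\bm{\mathcal W_{-2}}$ at the degenerate point, hence at most $\partial_z^2$. The $\partial_z^3$ term enters through the level-three null relation~\eqref{eq:degenerate_W3} combined with the identity~\eqref{eq:W3_limit} of Lemma~\ref{lemma:descendents_three}, i.e.\ through the \emph{regular} ($O(1)$) term of the $WV$ expansion, which is what yields the BPZ identity~\eqref{eq:BPZ}. Second, real-analyticity is not ``inherited from the GMC construction'': the paper only establishes $C^2$ regularity probabilistically (Proposition~\ref{regularity}) and obtains real-analyticity \emph{a posteriori} from the hypergeometric equation by applying $\partial_{\bar z}^3$ and invoking analytic hypoellipticity of the resulting operator with principal part $z^3(z-1)\Delta^3$; asserting real-analyticity of the chaos expectation directly would require an argument the paper does not provide.
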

This differential equation has been studied in the physics literature by Fateev-Litvinov in~\cite{FaLi0}; more generally the authors predicted that for the general $\mathfrak{sl}_n$ Toda theory a differential equation of order $n$ would hold for certain four-point correlation functions. This allowed them to derive the value of some Toda three-point correlation functions~\cite[Equation (21)]{FaLi0}. In a future work the first author plans to provide a rigorous derivation of such expressions for these $\mathfrak{sl}_3$ Toda three-point correlation functions. However there are actually additional issues to be taken care of compared to the Liouville case, \emph{e.g.} existence of several reflection coefficients, which prevent a straightforward extension of the method developed in~\cite{KRV_DOZZ}. A first step towards recovering formulas from~\cite{FaLi0} would be to conduct a thorough investigation of these reflection coefficients. This would involve new probabilistic objects such as a generalization of the celebrated Williams path decomposition~\cite{Williams} in the setting of Cartan subalgebras. Besides it is not clear at this stage to what extent our probabilistic framework would allow to find formulas for correlation functions which are not predicted in the physics literature. For the time being we shed light on the fact that our method for proving Theorem~\ref{thm:BPZ} is essentially new and based on a certain representation of the descendent fields in the $\mathfrak{sl}_3$ Toda theory. It differs from the one usually used to derive BPZ-type equations (such as in~\cite{KRV_loc,remy1, Zhu_BPZ}) in that computations are simplified and only involve elementary algebraic manipulations (see Section~\ref{section:BPZ}). Moreover this representation provides a systematic method of finding degenerate fields, which are key in the understanding of the $\mathfrak{sl}_3$ Toda theory. It is worth noting that it is straightforward to extend our method to the case of Liouville theory.

The plan of the present manuscript is as follows: in Section~\ref{section_construction}, we review the basic tools to define the Toda correlation functions; a first study of analytic properties of these correlation functions as well as an interpretation of the higher-spin current in probabilistic terms will be developed in Section~\ref{section_interpretation}. We then proceed to the proof of the local and global Ward identities~\eqref{Ward_stress}, ~\eqref{Ward_higher} and ~\eqref{global_Ward} in Section~\ref{section_proof}. Section~\ref{section:BPZ} is dedicated to highlighting some applications of the expression of the descendent fields, namely by the study of the degenerate fields of the $\mathfrak{sl}_3$ Toda theory which will pave the way to proving Theorem~\ref{thm:BPZ}. Some auxiliary computations are gathered in Section~\ref{section_appendix}.

%%%%%%%%%%%%%%%%%%%%%%%%%%%%%%%%%%%%%%%%
%%%%%%%%%%%%%%%%%%%%%%%%%%%%%%%%%%%%%%%%
%%%%%%%%%%%%%%%%%%%%%%%%%%%%%%%%%%%%%%%%
\section{Background on the probabilistic construction of the $\mathfrak{sl}_3$ Toda theory}\label{section_construction}
%%%%%%%%%%%%%%%%%%  
We recall here the basic materials from~\cite{Toda_construction}; more precisely we present in this section the probabilistic construction of the $\mathfrak{sl}_3$ Toda correlation functions along with the basic currents of the $W_3$-algebra. We first provide some general reminders on conformal geometry and Lie algebras. We then introduce the probabilistic framework needed in the document.

\subsection{Some reminders on conformal geometry and Lie algebras}\label{reminders}

\subsubsection{Conformal geometry on the Riemann sphere}

It is usually more convenient (for the probabilistic formulation) to map conformally the sphere $\S^2$ to the Riemann sphere (or the extended complex plane) $\C\cup\lbrace\infty\rbrace$ or $\R^2\cup\{\infty\}$. To do so, we use the standard stereographic projection, so that the sphere $\S^2$ with its standard metric is identified (using conformal invariance of the model) with ${\C}\cup\lbrace\infty\rbrace$ endowed with the spherical metric
\begin{equation}
    \hat{g}=\frac{4}{(1+|z|^2)^2}|dz|^2.
\end{equation}
This Riemannian metric $\hat g$ has constant scalar curvature equal to $R_{\hat g}=2$ and gives total volume to the plane ${\rm v}_{\hat g}(\C)=4\pi$. Here and after ${\rm v}_{\hat g}$ denotes the volume form in the $\hat g$ metric.
 
To this metric on the Riemann sphere we can associate a Green kernel
\begin{equation}\label{Green_round}
    G_{\hat g}(x,y)\coloneqq\ln\frac{1}{\norm{x-y}}-\frac14\left(\ln\hat g(x)+\ln\hat g(y)\right)+\ln2-\frac12.
\end{equation}
It is such that the $ \mathrm H^1(\R^2,\hat g)$ solution~\cite[Equation (2.3)]{Toda_construction} of the problem with $f\in L^2(\R^2,{\rm v}_{\hat g})$
\begin{equation}\nonumber
\triangle_{\hat g} u=-2\pi \left(f-m_{\hat g}(f)\right) \quad\text{on }\R^2,\quad  \int_{\R^2} u(x) \,{\rm v}_{\hat g}(dx) =0,
\end{equation}
can be written as
\begin{equation}
u=\int_{\R^2} G_{\hat g}(\cdot,x)f(x){\rm v}_{\hat g}(dx).
\end{equation}
The notations $\triangle_{\hat g}$ and $m_{\hat g}$ respectively stand for the Laplacian and the mean-value in the metric $\hat g$ with
\begin{equation*}
	m_{\hat g}(f)\coloneqq\frac{1}{4\pi}\int_{\R^2} f(x){\rm v}_{\hat g}(dx).
\end{equation*}

\subsubsection{Lie algebras and the Toda field action}    

Toda theories rely on a Lie algebra structure; basic materials on the notion of Lie algebra can be found for instance in the introductory textbook~\cite{Hum72}. We consider in this manuscript the case of the $\mathfrak{sl}_3$ Lie algebra, which we identify with $3\times3$ real matrices with zero trace. Its Cartan sub-algebra then corresponds to the subset of $\mathfrak{sl}_3$ made of diagonal matrices, that is $\mathfrak{h}_3\coloneqq \{x=(x_1,x_2,x_3)\in\R^3~|~\sum_{i=1}^3x_i=0\}$. The simple roots are given by $e_i\coloneqq v_i-v_{i+1}$, $i=1,2$, where $(v_i)_{1\leq i\leq 3}$ is the canonical basis of $\R^3$ and are viewed as elements of the dual space $\mathfrak{h}_3^*$ of $\mathfrak{h}_3$.
We denote by $\ps{\cdot,\cdot}$ the scalar product\footnote{This scalar product differs from the Killing form by a multiplicative factor whose value is not relevant in the present document.} on $\mathfrak{h}_3^*$ defined via the expression
\begin{equation*}
    \left(\ps{e_i,e_j}\right)_{i,j}\coloneqq A=\begin{pmatrix}
    2 & -1\\
    -1 & 2
    \end{pmatrix}.
\end{equation*}
Here $A$ is the Cartan matrix of $\mathfrak{sl}_3$. The \emph{fundamental weights} $(\omega_i)_{1\leq i \leq 2}$, form the basis of $\mathfrak{h}_3^*$ dual to that of the simple roots ($ \delta_{ij}$ is the Kronecker symbol):
 \begin{equation}
 \omega_i\coloneqq  \sum_{l=1}^{2} (A^{-1})_{i,l}e_l \quad\text{so that}\quad\ps{\omega_i,e_j}=\delta_{ij},\quad i,j\in\lbrace1,2\rbrace.
 \end{equation} 
Explicitly we have $\omega_1=\frac{2e_1+e_2}3$ and $\omega_2=\frac{e_1+2e_2}{3}$.

There are special elements of the Cartan sub-algebra $\mathfrak{h}_3^*$ that play a special role. These are the \emph{Weyl vector},
 \begin{equation}\label{eq:def_Weyl}
     \rho\coloneqq \omega_1+\omega_2=e_1+e_2,
 \end{equation}
 for which $\ps{\rho,e_i}=1$ for $i=1,2$, and the fundamental weights in the first fundamental representation $\pi_1$ of $\mathfrak{sl}_3$ with the highest weight $\omega_1$. These fundamental weights are given by
\begin{equation}\label{eq:definition_hi}
    h_1\coloneqq \frac{2e_1+e_2}{3},\quad h_2\coloneqq \frac{-e_1+e_2}{3}, \quad h_3\coloneqq  -\frac{e_1+2e_2}{3}.
\end{equation}
They prove to be useful in particular thanks to the following decomposition relation, which holds for any two vectors $u,v$ in $\mathfrak{h}_3^*$:
\begin{equation}
    \ps{u,v}=\sum_{i=1}^3\ps{u,h_i}\ps{h_i,v}.
\end{equation}
This follows from $\ps{h_i,h_j}=-\frac13+\delta_{ij}$. Let us also introduce the shorthands, for $u,v,w$ in $\mathfrak{h}_3^*$
\begin{equation}\label{def:B}
B(u,v)\coloneqq  \ps{h_2-h_1,u}\ps{h_1,v}+\ps{h_3-h_2,u}\ps{h_3,v} \quad\text{and}
\end{equation}
\begin{equation}\label{def:C}
{C(u,v,w)\coloneqq \ps{h_1,u}\ps{h_2,v}\ps{h_3,w}+\ps{h_1,v}\ps{h_2,w}\ps{h_3,u}+\ps{h_1,w}\ps{h_2,u}\ps{h_3,v}}.
\end{equation}

We now turn to the Toda field action. For $\gamma\neq0$ a real number we introduce 
\begin{equation}
    q\coloneqq \gamma+\frac{2}{\gamma}.
\end{equation}
The $\mathfrak{sl}_3$ Toda field is a random distribution with values in $\mathfrak{h}_3$, and whose law is described by a path integral based on the Toda action functional. For the spherical metric $\hat g$ and a map $\varphi:\C\cup\lbrace\infty\rbrace\rightarrow\mathfrak{h}_3$, the $\mathfrak{sl}_3$-Toda field action is equal to\footnote{Thanks to Riesz representation theorem we will often identify $\mathfrak{h}_3$ with its dual space $\mathfrak{h}_3^*$.}
\begin{equation}
 S_T(\varphi,\hat g)= \frac{1}{4\pi} \int_{\C}  \left(  \left\langle\partial_{\hat g} \varphi(x), \partial_{\hat g} \varphi(x) \right\rangle  + 2 \langle Q, \varphi(x) \rangle +4\pi \sum_{i=1}^{2} \mu_i e^{\gamma    \langle e_i,\varphi(x) \rangle}   \right){\rm v}_{\hat g}(dx)
\end{equation}
where, as before, $Q$ is the background charge, the positive quantities $\mu_1$ and $\mu_2$ form the cosmological constant and $\gamma >0$ is the coupling constant, with
\begin{equation}\label{eq:definition_Q}
 Q\coloneqq  q \rho.
\end{equation}
In the present manuscript we will not consider variations of the action with respect to the metric (this corresponds to the so-called \emph{Weyl anomaly}~\cite[Theorem 3.1]{Toda_construction}) so we have chosen to fix $g=\hat g$.

\begin{remark}
For the probabilistic construction to make sense and in agreement with \cite[Theorem 3.1]{Toda_construction}, \textbf{we will assume in the sequel that $\gamma\in(0,\sqrt{2})$}.
\end{remark}
 
\subsection{The probabilistic framework}\label{subsec:proba}
 %%%%%%%%%%%%%%%%%%%
 We present here the main probabilistic objects for the path integral definition of the $\mathfrak{sl}_3$ Toda theory.
 \subsubsection{Gaussian Free Fields}
 %%%%%%%%%%%%%%%%%%%%%%%%%%%%%
The geometric form of the Toda field action naturally leads to the introduction of GFFs with values in $\mathfrak{h}_3^*$. More precisely, set
\begin{equation}
\X\coloneqq X_1\omega_1 +X_2\omega_2
\end{equation}
where $X_1,X_2$ are two real-valued GFFs (see~\cite{dubedat, She07} for instance) with covariance structure given by
\begin{equation}
 \E[X_i(x) X_j(y)]=A_{i,j}   G_{\hat g}(x,y),\quad x\neq y\in\C\text{ and }i,j\in\{1,2\}.
 \end{equation}
The field $\X$ has almost surely zero-mean in the $\hat g$-metric. We will add the zero-mode $\bm{c}\in\mathfrak{h}_3^*\simeq \R^2$, which corresponds to the mean-value of the Toda field, in the next paragraph.

\subsubsection{Gaussian Multiplicative Chaos}
 %%%%%%%%%%%%%%%%%%%%%%%%%
It is a classical fact that the field $\X$ is highly non-regular and only exists in the distributional sense. We will see that the Toda field inherits the same regularity from $X$; as a consequence the exponential potential $e^{\gamma\ps{e_i,\varphi(x)}}$ that appears in the Toda field action is a priori not well-defined. In order to make sense of this term we introduce a limiting procedure that allows to define the random measure $e^{\gamma\ps{e_i,\varphi(x)}}v_{\hat{g}}(dx)$, known as a \emph{Gaussian Multiplicative Chaos~\cite{Kah,review}}.

To start with, we define a regularization $\X_\eps$ of the GFF by setting
\begin{equation}\label{regularization}
    \X_\eps\coloneqq \X*\eta_\eps=\int_{\C}\X(\cdot-y)\eta_\eps(y)dy.
\end{equation}
where $\eta_\eps\coloneqq \frac1{\eps^2}\eta(\frac{\cdot}{\eps})$ is a smooth, compactly supported mollifier. Without loss of generality we assume that $\eta=0$ outside of some ball of radius $R$ centered at the origin. The field $\X_\eps$ obtained via this regularization procedure is now smooth, so the exponential of $\X_\eps$ is therefore well-defined.
Under the bound $\gamma<\sqrt 2$, the theory of GMC allows us to claim that, for each $e_i$ with $i=1,2$, within the space of Radon measures equipped with the weak topology, the regularized exponential term converges in probability to some non-trivial random measure~\cite{Ber,Sha}:
\begin{equation}
    M^{\gamma e_i}(dx)\coloneqq \underset{\eps \to 0}{\lim} M_\eps^{\gamma e_i}(dx),
\end{equation} 
where we have set
\begin{equation}
    M_\eps^{\gamma e_i}(dx):=\:  e^{  \langle\gamma e_i,  \X_\eps(x) \rangle-\frac12\expect{\langle\gamma e_i, \X_\eps(x) \rangle^2}} {\rm v}_{\hat g}(dx).
\end{equation}    
This limiting random measure is called a \emph{Gaussian Multiplicative Chaos measure}. Following the ideas presented in \cite{Toda_construction}, this formalism allows to translate the Toda path integral with respect to the Toda action \eqref{Toda_action} as a probabilistic object. This is done by setting for $F$ positive and continuous on $\mathrm{H}^{-1}(\C\to\mathfrak{h}_3^*,\hat g)$ (see~\cite[Equation (2.32)]{Toda_construction}):
 \begin{equation*}
\int F(\varphi)e^{-S_T(\varphi,\hat g) }D\varphi\coloneqq \lim\limits_{\eps\to 0}\int_{\mathfrak{h}_3^*}e^{-2\ps{Q,\bm c}}\E\left[F\left( \X_\eps+\frac {Q}{2}\ln \hat g_\eps+\bm c\right)e^{- \sum_{i=1}^{2} \mu_i e^{\gamma  \langle e_i, \bm c\rangle }M^{ \gamma e_i}_\eps(\C) }\right]d\bm c
\end{equation*}
with $\hat g_\eps(z)\coloneqq \hat g*\eta_\eps(z)$, and where the integral over $\mathfrak{h}_3^*$ is defined by
\begin{equation}
\int_{\mathfrak{h}_3^*}F(\bm c)d\bm c\coloneqq\int_{\R^2}F(c_1\omega_1+c_2\omega_2)dc_1dc_2.
\end{equation}

In the sequel, we denote with brackets the above quantity (not to be confused with the brackets for the scalar product on $\mathfrak{h}_3^*$):
 \begin{equation}\label{def:TPI}
\ps{F}\coloneqq \lim\limits_{\eps\rightarrow0}\int_{\mathfrak{h}_3^*}e^{-2\ps{Q,\bm c}}\E\left[F\left( \varphi_\eps\right)e^{- \sum_{i=1}^{2} \mu_i e^{\gamma  \langle e_i, \bm c\rangle }M^{ \gamma e_i}_\eps(\C) }\right]d\bm c, 
\end{equation}
where we introduced the shorthand
\begin{equation}\label{eqdef:field}
    \varphi_\eps\coloneqq \left(\X+\frac Q2\ln \hat g+\bm c\right)*\eta_\eps.
\end{equation}
The field $\varphi_\eps$ will be referred to as the regularized Toda field.

\subsection{Holomorphic currents and Toda correlation functions}\label{sub_currents}
We are now ready to provide a rigorous definition of the correlation functions of \emph{Vertex Operators}, to which  we may insert a holomorphic current. 
\subsubsection{Toda correlation functions}
In the language of representation theory, Vertex Operators are primary states of a highest-weight representation of the $W$-algebra. However this definition is not really explicit and at some point not tractable. One of the advantage of our present formulation of the $\mathfrak{sl}_3$ Toda theory is that it is possible to write down explicit expressions for these operators as follows.

For $z\in \C$ and $\alpha\in  \mathfrak{h}_3^*$ the regularized Vertex Operator $V_{\eps, \alpha}(z)$ is defined by
  \begin{equation}
 V_{\eps, \alpha}(z)\coloneqq  \hat g(z)^{\Delta_\alpha}e^{  \langle\alpha, \X_\eps(z) +\bm c\rangle-\frac{1}2\expect{\ps{\alpha,\X_\eps(z)}^2}}
 \end{equation}
  where $\X_\eps$ is the regularized field as above and recall that $\Delta_\alpha=\ps{\frac\alpha2,Q-\frac\alpha2}$. Using the interpretation for the path integral~\eqref{def:TPI} we define the $\mathfrak{sl}_3$ Toda regularized correlation functions as:
 \begin{equation}\label{correl_approx}
  \left\langle \prod_{k=1}^NV_{\alpha_k,\eps}(z_k)\right\rangle  \coloneqq  \int_{ \mathfrak{h}_3^*}    e^{-2\ps{Q,\bm c}}\E \left[\prod_{k=1}^N V_{\eps, \alpha_k}(z_k) e^{- \sum_{i=1}^{2} \mu_i e^{\gamma  \langle \bm{c},e_i\rangle }M^{ \gamma e_i,g}_\eps(\C)}  \right]d\bm c  .
\end{equation}
For future convenience we introduce 
\begin{equation}\label{eq:definition_si}
    s_i\coloneqq   \frac{\left\langle \sum_{k=1}^N \alpha_k-2Q,\omega_i \right\rangle}{\gamma} \cdot
\end{equation}
 An important result is the existence of constraints on $(s_i)_{i=1,2}$ that characterize the non-triviality of the limit of the regularized correlation functions (adapted from~\cite[Theorem 3.1]{Toda_construction}, see also \cite{Sei90}).
\begin{thmcite}\label{propcorrel1}
The correlation functions 
\begin{equation}
\Big\langle V_{\alpha_1}(z_1) \cdots V_{\alpha_N}(z_N) \Big\rangle\coloneqq \lim\limits_{\eps\rightarrow0}\Big\langle \prod_{k=1}^NV_{\alpha_k,\eps}(z_k)\Big\rangle
\end{equation} exist and are non trivial if and only if the \emph{Seiberg bounds} hold: 
\begin{equation}\label{seiberg_bounds}
\begin{split}
&\bullet\quad\text{For all }i=1,2,\quad s_i>0.\\
&\bullet\quad\text{For all }i=1,2\text{ and }1\leq k\leq N, \quad \langle\alpha_k, e_i \rangle < \langle Q, e_i \rangle= \gamma+\frac{2}{\gamma}.
\end{split}
\end{equation}
Moreover we have the following explicit expression
\begin{equation}\label{expressioncorrels}
\begin{split}
 &\langle V_{\alpha_1}(z_1) \cdots V_{\alpha_N}(z_N) \rangle=    \\
  & \left ( \prod_{i=1}^{2} \frac{\Gamma (s_i)\mu_i^{- s_i}}\gamma   \right )\prod_{ j < k} \norm{z_j-z_k}^{-\langle\alpha_j ,\alpha_k \rangle } \E \left [      \prod_{i=1}^{2}\left(\int_\C \frac{\hat g(x_i)^{-\frac\gamma4\sum_{k=1}^N\ps{\alpha_k,e_i}}}{\prod_{k=1}^N\norm{z_k-x_i}^{\gamma\ps{\alpha_k,e_i}}}M^{\gamma e_i,\hat g}(dx_i)\right)^{- s_i} \right ].
\end{split}
\end{equation}
The expectation on the right-hand side of the above expression is finite provided that the extended Seiberg bounds are satisified:
\begin{equation}\label{lemmaSeibergplus}
\text{For } i=1,2,\quad -s_i < \frac{2}{\gamma^2}\wedge \min_{k=1,\dots,N} \frac{1}{\gamma}\langle Q-\alpha_k,e_i\rangle.
\end{equation}
\end{thmcite}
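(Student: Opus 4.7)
The plan is to derive the explicit formula in three steps: (i) integrate out the zero mode $\bm c$, (ii) apply Girsanov's theorem to absorb the Vertex Operators into a shift of the underlying GFF, and (iii) control the $\eps\to 0$ limit of the resulting GMC expectation. Each of the Seiberg-type constraints will emerge naturally as the condition for the corresponding step to converge.

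First I would write $\bm c=c_1\omega_1+c_2\omega_2$, so that $\ps{e_i,\bm c}=c_i$, and observe that $\ps{\sum_k\alpha_k-2Q,\bm c}=\gamma\sum_i s_ic_i$ by the very definition~\eqref{eq:definition_si} of $s_i$. The zero-mode integral then factorizes into two independent one-dimensional integrals of the form $\int_\R e^{\gamma s_i c_i-\mu_i e^{\gamma c_i}Z^{(\eps)}_i}\,dc_i$ with $Z^{(\eps)}_i\coloneqq M^{\gamma e_i}_\eps(\C)$. The change of variable $v_i=\mu_i e^{\gamma c_i}Z^{(\eps)}_i$ turns each of these into $\gamma^{-1}\Gamma(s_i)(\mu_i Z^{(\eps)}_i)^{-s_i}$, with convergence at $c_i\to-\infty$ requiring exactly the first Seiberg bound $s_i>0$ (convergence at $+\infty$ is automatic from the GMC term). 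This yields the prefactor $\prod_i\Gamma(s_i)\mu_i^{-s_i}/\gamma$ and reduces the problem to computing an expectation involving only $\prod_k V_{\eps,\alpha_k}(z_k)$ (stripped of its $\bm c$-factor) and $\prod_i (Z^{(\eps)}_i)^{-s_i}$.

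Next, each regularized Vertex Operator is an exponential Gaussian functional of $\X_\eps$, so Girsanov's theorem replaces the product of Vertex Operators by an additive shift of the mean of $\X_\eps$ by $\sum_k\alpha_k\, G_{\hat g,\eps}(z_k,\cdot)$. The quadratic interaction of these shifts between distinct insertions produces the prefactor $\prod_{j<k}\norm{z_j-z_k}^{-\ps{\alpha_j,\alpha_k}}$, the conformal weights $\hat g(z_k)^{\Delta_{\alpha_k}}$ emerging from the local pieces of~\eqref{Green_round} together with the Wick renormalization. Inside each $Z^{(\eps)}_i$, the same shift multiplies the GMC density by $\prod_k\norm{z_k-x}^{-\gamma\ps{\alpha_k,e_i}}\hat g(x)^{-(\gamma/4)\sum_k\ps{\alpha_k,e_i}}$, reproducing exactly the integrand of the expectation in~\eqref{expressioncorrels}.

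Finally, the limit $\eps\to 0$ is handled by the standard GMC convergence theory (Kahane, Berestycki, Shamov), and the crux of the argument is to prove that the negative moment of order $s_i$ of the weighted GMC is finite under~\eqref{lemmaSeibergplus}; this is the main technical obstacle. Almost-sure finiteness of the weighted GMC integral reduces to integrability of $\norm{z_k-x}^{-\gamma\ps{\alpha_k,e_i}}$ against the GMC measure near each $z_k$, which by the multifractal behaviour of GMC holds precisely when $\gamma\ps{\alpha_k,e_i}<\gamma\ps{Q,e_i}$, i.e.\ the second Seiberg bound. Finiteness of the negative moment then relies on Kahane's convexity inequality combined with a multiscale analysis: one compares the log-correlated field with an exactly scale-invariant cascade and uses that the global constraint $-s_i<2/\gamma^2$ controls the upper tail of the total mass, while the local constraints $-s_i<\gamma^{-1}\ps{Q-\alpha_k,e_i}$ control the concentration of mass near each insertion. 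Non-triviality of the limit is inherited from strict positivity of the GMC measure on open sets.
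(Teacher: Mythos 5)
The paper does not prove this statement: it is imported as Theorem~\ref{propcorrel1} ``adapted from \cite[Theorem 3.1]{Toda_construction}'', so there is no internal proof to compare against. Your sketch follows exactly the route used in that reference (and in its Liouville antecedent): integrate out the zero mode to produce the $\Gamma(s_i)\mu_i^{-s_i}/\gamma$ prefactor, apply Girsanov/Cameron--Martin to generate the $\prod_{j<k}\norm{z_j-z_k}^{-\ps{\alpha_j,\alpha_k}}$ factor and the shifted GMC densities, and control the $\eps\to0$ limit via GMC moment estimates, with the two Seiberg bounds \eqref{seiberg_bounds} arising where you say they do. One imprecision worth fixing: the extended bounds \eqref{lemmaSeibergplus} are automatically satisfied (and play no role) when $s_i>0$, since all genuinely negative moments of a GMC mass with admissible insertions are finite and the upper tail of the total mass is irrelevant there; the conditions $-s_i<2/\gamma^2$ and $-s_i<\gamma^{-1}\ps{Q-\alpha_k,e_i}$ become the operative constraints only in the regime $-s_i>0$, i.e.\ for \emph{positive} moments of the weighted GMC integral in \eqref{expressioncorrels}, whereas your write-up attributes them to the negative-moment case.
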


\subsubsection{Miura transformation and  holomorphic currents} 
The W-symmetry manifests itself via the holomorphicity of some currents, which implies existence of the so-called \emph{Ward identities} recalled in the introduction.

In order to define these currents, let us write down the general form of the holomorphic currents associated to the symmetries of the model via the Miura transformation
\begin{equation}\label{Miura_2}
    \left(\frac q2\partial+\ps{h_{3},\partial\varphi}\right)\left(\frac q2\partial+\ps{h_{2},\partial\varphi}\right)\left(\frac q2\partial+\ps{h_{1},\partial\varphi}\right)\coloneqq \sum_{i=0}^{3}\bm {\mathrm W^{(3-i)}}\left(\frac q2\partial\right)^i.
\end{equation}
The expression~\eqref{Miura_2} should be understood in the sense of operators, by which we mean that the factors $\ps{h_i,\partial\varphi}$ act multiplicatively while $\partial$ acts as differential operator. 
For instance we have $\bm {\mathrm W^{(0)}}=1$, $\bm {\mathrm W^{(1)}}=0$ (since $h_1+h_2+h_3=0$) and 
\begin{equation*}
\bm {\mathrm W^{(2)}}=\frac q2\ps{2h_1+h_2,\partial^2\varphi}+\ps{h_1,\partial\varphi}\ps{h_2,\partial\varphi}+\ps{h_2,\partial\varphi}\ps{h_3,\partial\varphi}+\ps{h_3,\partial\varphi}\ps{h_1,\partial\varphi}.
\end{equation*}
The latter actually corresponds (up to a multiplicative factor) to the \emph{stress-energy tensor} of the $\mathfrak{sl}_3$ Toda theory, which we denote as $\bm {\mathrm T}\coloneqq 2\bm {\mathrm W^{(2)}}$. Indeed since $2h_1+h_2=\rho$ and for any $u,v\in\mathfrak{h}_3^*$,
\begin{equation}\label{eq:remarkable_identity}
\ps{h_1,u}\ps{h_{2},v}+\ps{h_2,u}\ps{h_{3},v}+\ps{h_3,u}\ps{h_{1},v}=-\frac{1}{2}\ps{u,v},
\end{equation}
this tensor can be rewritten in a more standard way as 
\begin{equation*}
\bm {\mathrm T}(z)=\ps{Q,\partial^2\varphi(z)}-\ps{\partial\varphi(z),\partial\varphi(z)}.
\end{equation*}

Since we are working with a quantum theory, the products that appear above are understood as Wick products and make sense when $\eps$-regularized. Therefore we introduce 
\begin{equation}\label{eq:approx_T}
    \bm {\mathrm T}_{\eps}(z)\coloneqq \ps{Q,\partial^2\varphi_\eps(z)}-:\ps{\partial\varphi_\eps(z),\partial\varphi_\eps(z)}:
\end{equation}
where the notation $:XY:$ is shorthand for $XY-\expect{XY}$ and $\varphi_\eps$ is the regularized Toda field $\varphi_\eps=\X_\eps+\frac{Q}{2}\ln\hat g_\eps+\bm c$ as before.

The regularized higher-spin current $\bm {\mathrm W^{(3)}}_\eps$ is defined with a similar procedure applied to the expression
\begin{equation*}
    \frac{q^2}{4}\ps{h_1,\partial^3\varphi}+\frac {q}{2}\left(\ps{h_1,\partial^2\varphi}\ps{h_2+h_3,\partial\varphi}+\ps{h_2,\partial^2\varphi}\ps{h_1,\partial\varphi}\right)
    +\ps{h_1,\partial\varphi}\ps{h_2,\partial\varphi}\ps{h_3,\partial\varphi}.
\end{equation*}
As suggested in the physics literature~\cite{BCP,FaZa,FaLi1}, in order to recover the standard commutation relations of the $W_3$-algebra and obtain an elegant expression for the quantum number associated to the spin-three tensor $w(\alpha)$, we may choose to shift this current by an additional factor $-\frac q8 \partial\bm {\mathrm T}$. Explicitly, we redefine the spin-three current by setting
\begin{equation}
       \begin{split}
           \bm {\mathrm W^{(3)}}_\eps\coloneqq& -\frac{q^2}{8}\ps{h_2,\partial^3\varphi_\eps}+\frac{q}{4}\left(:\ps{h_2-h_1,\partial^2\varphi_\eps}\ps{h_1,\partial\varphi_\eps}:+:\ps{h_3-h_2,\partial^2\varphi_\eps}\ps{h_3,\partial\varphi_\eps}:\right)\\
    &+:\ps{h_1,\partial\varphi_\eps}\ps{h_2,\partial\varphi_\eps}\ps{h_3,\partial\varphi_\eps}:
       \end{split} 
\end{equation}
with $:XYZ\coloneqq XYZ-X\expect {YZ}-Y\expect{XZ}-Z\expect{XY}$.

In the sequel we will study the $\eps\rightarrow0$ limit of the quantity
\begin{equation*}
    \left\langle \bm {\mathrm W^{(3)}}_\eps\prod_{k=1}^NV_{\alpha_k,\eps}(z_k) \right\rangle.
\end{equation*}
In order not to overload notations, we will write $\bm {\mathrm W}$ for $\bm {\mathrm W^{(3)}}$.

%%%%%%%%%%%%%%%%%%%%%%%%%%%%%%%%%%%%%%%%%%%%%

\section{Regularity of the Toda correlation functions}\label{section_interpretation}
The purpose of this section is to provide some elementary results needed in order to prove the local Ward identities~\eqref{Ward_stress} and ~\eqref{Ward_higher}. Namely we explain how to make sense of the quantities presented in the previous Section~\ref{section_construction} thanks to \emph{Gaussian integration by parts} and give some regularity results for the Toda correlation functions.
%%%%%%%%%%%%%%%%%%%%%%%%%%%%%%%%%%%%%%%%%%%ù
\subsection{Probabilistic tools}
\subsubsection{The regularized Toda correlation functions}
According to the expression of the holomorphic currents of the $\mathfrak{sl}_3$ Toda theory, we will consider expressions that involve derivatives of the Toda field. As a consequence, we will work with a regularized versions of the objects introduced before. However in the computation of the Ward identities, there will be one extra point (that we denoted $z_0$) that does not appear within the Vertex Operators (that are evaluated at $z_1,\cdots,z_N$) but rather in the holomorphic current $\bm {\mathrm T}$ or $\bm {\mathrm W}$. Therefore we will use two different kinds of regularization corresponding either to the variables $(z_1,\cdots,z_N)$ appearing in the Vertex Operators or $z_0$ appearing in the currents.
As we will see having these two regularizations prove to be useful in order to simplify the computations. We follow the approach developed in~\cite{Zhu_BPZ}.

Let $\theta:\R^+\to[0,1]$ be a smooth function which is $0$ in $[0,\frac12]$ and $1$ in $[1,+\infty)$. For positive $\delta$ and complex $z$ we set $\theta_{\delta}(z)\coloneqq \theta\left(\frac{\norm{z}}{\delta}\right)$, such that $\theta_{\delta}(z)\rightarrow 1$ when $\delta\rightarrow0$ for any $z\neq 0$. In the sequel, $\bm z$ will denote the vector of complex numbers $(z_1,\dots, z_N)$ and $\bm \alpha$ the vector of parameters $(\alpha_1,\dots, \alpha_N)$ in $\left(\mathfrak{h}_3^*\right)^N$.

Let us assume that $(z_0,\bm z)$ are distinct complex points with $z_0\neq 0$ and that we are given $\bm\alpha\in\left(\mathfrak{h}_3^*\right)^N$ satisfying the Seiberg bounds~\eqref{seiberg_bounds}. 
Then for positive $\eps$ and $\delta$, the $(\eps,\delta)$-regularized correlation functions are defined by
\begin{equation}\label{correl_approx_bis}
  \left\langle\prod_{k=0}^NV_{\alpha_k,\eps}(z_k)\right\rangle_{\delta}\coloneqq  
  \int_{\mathfrak{h}_3^*}e^{-2\ps{Q,\bm c}}\E\left[\left(\prod_{k=1}^N V_{\eps, \alpha_k}(z_k)\right)e^{-\sum_{i=1}^{2}\mu e^{\gamma\langle\bm c,e_i\rangle}M^{\gamma e_i}_{\eps,\delta}(\C)}\right]d\bm c.
\end{equation}
The only difference with the expression~\eqref{correl_approx} is that we also regularize the total mass of the GMC inside the expectation term above around the point $z_0$
\begin{equation*}
    M^{ \gamma e_i}_{\eps, \delta}(\C)\coloneqq \int_\C \theta_{\delta}(x-z_0)M^{ \gamma e_i}_\eps(dx). 
\end{equation*}
The reason for this extra regularization will be made clear later; informally we remove the singularities coming from the holomorphic currents. For future convenience, we introduce the shorthand 
\begin{equation*}
\bm{\mathrm V}_\eps\coloneqq \prod_{k=1}^NV_{\alpha_k,\eps}(z_k).
\end{equation*}

Recall that $\hat g_\eps(z)\coloneqq \hat g*\eta_\eps(z)$ with $\eta_\eps$ as in~\eqref{regularization} and set
\begin{equation*}
   \norm{x}_\eps\coloneqq \exp\left(\int_\C\int_\C \ln\norm{x-z+z'}\eta_\eps(z)\eta_\eps(z')d^2zd^2z'\right).
\end{equation*}
We also use a regularization of the complex-valued map $x\mapsto\frac1{x^p}$ for a positive integer $p$ by setting
\begin{equation*}
\frac1{(x)^p_\eps}\coloneqq \frac{(-1)^p}{(p-1)!}\int_\C\int_\C \frac{1}{(x-z+z')}\partial_z^{p-1}\eta_\eps(z)\eta_\eps(z')d^2zd^2z'.
\end{equation*}
The connection with the map $x\mapsto\frac1{x^p}$ is made clear by the following statement.
\begin{lemma}\label{approx_inverse}
Let $\eps>0$ and $p$ be a positive integer, and recall $R$ from the regularization~\eqref{regularization}. Then for any $\norm{x}>4R\eps$ it holds that
\begin{equation*}
    \frac{x^p}{(x)_\eps^p}-1=\left(\frac{\eps}{x}\right)^2\mathrm{F}\left(\frac{\eps}{x}\right)
\end{equation*}
where $\mathrm{F}$ is continuous over $\C$ and depends only on the mollifier $\eta$ of the $\eps$-regularization. In particular, the family of functions $\left(\frac{1}{(x)_\eps^p}\right)_{\eps>0}$ converges uniformly to $\frac{1}{x^p}$ on every compact set of $\C\setminus\{0\}$.
Similarly, the quantity
\begin{equation*}
\sup\limits_{0<\norm{x}\leq 4R\eps}\norm{\frac{x^p}{(x)_\eps^p}}
\end{equation*}
remains bounded uniformly on $\eps$.
\end{lemma}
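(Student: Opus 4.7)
The strategy is to pass to fixed-scale coordinates via $u = z/\eps$, $u' = z'/\eps$, $y = x/\eps$. Since $\eta_\eps(z)\,d^2z = \eta(u)\,d^2u$ and $\partial_z^{p-1}\eta_\eps(z) = \eps^{-(p-1)}(\partial_u^{p-1}\eta)(z/\eps)$, the integral defining $\tfrac{1}{(x)^p_\eps}$ equals $\eps^{-p}\Phi(y)$, where
\begin{equation*}
\Phi(y) = \frac{(-1)^p}{(p-1)!}\int_\C\int_\C\frac{\partial_u^{p-1}\eta(u)\,\eta(u')}{y-u+u'}\,d^2u\,d^2u'
\end{equation*}
depends only on $\eta$ and $p$. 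Consequently $x^p/(x)^p_\eps = y^p\Phi(y) =: \psi(y)$ is a dimensionless function of $y$ alone, and it suffices to analyze $\psi$.

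For $|y| > 4R$ the kernel $(y-u+u')^{-1}$ is smooth in $(u,u')$ on the support of $\eta(u)\eta(u')$. A Wirtinger integration by parts, which shifts the $p-1$ holomorphic derivatives off $\eta$ via $\int f\partial_u g\,d^2u = -\int g\partial_u f\,d^2u$ and uses the identity $\partial_u^{p-1}(y-u+u')^{-1} = (p-1)!(y-u+u')^{-p}$, rewrites $\Phi(y)$ (up to an overall sign absorbed into the definition) as $\int\int\eta(u)\eta(u')(y-u+u')^{-p}d^2u\,d^2u'$. On the support of the integrand one has $|u-u'|/|y| \leq 1/2$, and the geometric expansion
\begin{equation*}
\frac{y^p}{(y-u+u')^p} = \sum_{k=0}^{\infty}\binom{p+k-1}{k}\Bigl(\frac{u-u'}{y}\Bigr)^k
\end{equation*}
converges uniformly. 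Integrating termwise: the $k=0$ term gives $\int\int\eta(u)\eta(u')d^2u\,d^2u' = 1$; the $k=1$ term vanishes because $\eta(u)\eta(u')$ is symmetric in $(u,u')$ while $u-u'$ is antisymmetric; the tail $k\geq 2$ defines an absolutely convergent series of the form $y^{-2}F(1/y)$ with $F$ analytic on $\{|w| < 1/(2R)\}$ and depending only on $\eta$ and $p$. Undoing the rescaling yields the first identity of the lemma. Uniform convergence on a compact $K\subset\C\setminus\{0\}$ is then immediate: choose $r>0$ with $|x|\geq r$ on $K$; then for $\eps\leq r/(4R)$ one has $|\psi(y)-1| \leq (\eps/r)^2\sup_{|w|\leq 1/(4R)}|F(w)|$, which tends to $0$.

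The remaining supremum over $\{0<|x|\leq 4R\eps\}$ reduces, by the scaling, to $\sup_{0<|y|\leq 4R}|\psi(y)|$, which is manifestly independent of $\eps$. To see it is finite one shows that $\Phi$ is continuous on all of $\C$, including at $y=0$ where the Taylor argument is no longer available. This uses that the kernel $1/w$ lies in $L^1_{\mathrm{loc}}(\R^2)$: the integral defining $\Phi(y)$ is the convolution of a smooth, compactly supported function with $1/w$, hence continuous in $y$ by dominated convergence. Thus $\psi(y) = y^p\Phi(y)$ is continuous on the compact set $\{|y|\leq 4R\}$ and the supremum is finite. The main technical subtlety of the argument is the antisymmetry cancellation at order $k=1$: without it one would obtain only a bound of order $\eps/|x|$, which would be too weak for the regularization arguments needed later in the paper.
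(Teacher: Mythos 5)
Your proof is correct, and for the main identity it follows essentially the same route as the paper: after rescaling by $\eps$ and integrating by parts to bring the kernel into the form $(y-u+u')^{-p}$, you expand in powers of $(u-u')/y$, observe that the zeroth-order term integrates to $1$ and the first-order term cancels by the $u\leftrightarrow u'$ symmetry, and collect the tail into $(\eps/x)^2\mathrm{F}(\eps/x)$ --- this is exactly the paper's argument, and your parenthetical about absorbing an overall sign correctly flags that the prefactor $(-1)^p$ in the definition of $\frac{1}{(x)^p_\eps}$ should read $(-1)^{p-1}$ for the normalization $\psi(y)\to 1$ to come out right (the paper's own proof silently makes the same correction). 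Where you genuinely diverge is the bound on $\{0<\norm{x}\leq 4R\eps\}$: the paper restricts the integral to the region where $\norm{x+\eps(z_1-z_2)}>\norm{x}/2$ and bounds the integrand pointwise there, but the justification for discarding the complementary region relies on $\norm{x}>4R\eps$ and is not available in this regime; your argument --- writing $\Phi$ as the convolution of the locally integrable planar kernel $1/w$ with a smooth compactly supported function, deducing continuity of $\Phi$ and hence boundedness of $y^p\Phi(y)$ on $\{\norm{y}\leq 4R\}$ --- is the more careful one and actually repairs this gap. Two cosmetic points: your $\mathrm{F}$ is analytic only on $\{\norm{w}<1/(2R)\}$ rather than continuous on all of $\C$ as the statement asserts, but only its values on $\{\norm{w}\leq 1/(4R)\}$ are ever used, so nothing is lost; and the dominated-convergence step for continuity of the convolution is cleanest after the substitution $w=y-v$, which gives a genuine integrable dominating function.
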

We postpone the proof of this claim and of the next one to Section~\ref{section_appendix} since they are not informative.
Along the proof of Theorem~\ref{main_theorem} we will need to integrate certain correlation functions over the complex plane. The integration variables will show up in these correlation functions in the form of Vertex Operators with weights $V_{\gamma e_i}$, $i=1,2$. The following statement addresses this issue by ensuring that the quantities involved do make sense:
\begin{lemma}\label{integrability}
Assume that the $\bm{z}\coloneqq(z_1,\cdots,z_N)$ are distinct and $\bm{\alpha}\coloneqq(\alpha_1,\cdots,\alpha_N)\in \left(\mathfrak{h}_3^*\right)^N$ are such that the Seiberg bounds~\eqref{seiberg_bounds} are satisfied. Consider complex vectors $\bm{x}_1=(x_1^{(1)},\cdots,x_1^{(r_1)})$ and $\bm{x}_{2}=(x_{2}^{(1)},\cdots,x_{2}^{(r_2)})$. 
\begin{enumerate}
    \item For $\eps,\delta>0$, there exists a positive constant $C_\eps$, independent of $\delta$, such that
    \begin{equation*}
        \ps{\prod\limits_{i=1}^{r_1}V_{\gamma e_1,\eps}\left(x_1^{(i)}\right)\prod\limits_{j=1}^{r_2}V_{\gamma e_2,\eps}\left(x_2^{(j)}\right)\V_\eps}_{\delta}\leq C_\eps \prod\limits_{i=1}^{r_1}\left(1+\norm{x_1^{(i)}}\right)^{-4}\prod_{j=1}^{r_2}\left(1+\norm{x_2^{(j)}}\right)^{-4}.
    \end{equation*}
    \item For $\eps=\delta=0$ and $\rho>0$, if $\bm{x}_1,\bm{x}_2$ and $\bm{z}$ are in the domain $U_\rho\coloneqq \left\lbrace{\bm w, \rho<\min\limits_{i\neq j}\norm{w_i-w_j}}\right\rbrace$, the above bound remains true uniformly over $(\bm{x}_1,\bm{x_2})$, with a constant $C$ that depends only on $\rho$.
    \item Assume that all pairs of points in $\bm{z}$ are separated by some distance $\rho>0$ except for one pair $(z_1,z_2)$, and that all the $z$'s stay in a compact subset of $\C$. Further assume that $\ps{\alpha_1+\alpha_2-Q,e_1}<0$. Then, for any positive $\eta$, there exists a positive constant $K=K(\rho)$ such that, uniformly on $\eps$ and $\delta$,
        \begin{equation}
        \left\langle \V_\eps \right\rangle_{\delta}\leq K(\rho) \norm{z_1-z_2}^{-\ps{\alpha_1,\alpha_2}-\eta+\frac{1}{2}\ps{\alpha_1+\alpha_2-Q,e_2}^2\mathds{1}_{\ps{\alpha_1+\alpha_2-Q,e_2}>0}}.
        \end{equation}
    If $\ps{\alpha_1+\alpha_2-Q,e_2}<0$, we have the same estimate by switching $e_1\leftrightarrow e_2$.
    \item In the previous estimate, if $\alpha_1,\alpha_2\in\{\gamma e_1,\gamma e_2\}$, then there exists some $\zeta>0$ such that
    \begin{equation*}
    \left\langle \V_\eps \right\rangle_{\delta}\leq K \norm{z_1-z_2}^{-2+\zeta}.
    \end{equation*}
    \item For some $p>1$ that depends on $\gamma$ and $\ps{\alpha_0,e_1}$, the map
        \begin{equation*}
        x_1^{(1)}\mapsto \sup\limits_{\eps}\left\langle \prod\limits_{i=1}^{r_1}V_{\gamma e_1,\eps}(x_1^{(i)})\prod\limits_{j=1}^{r_2}V_{\gamma e_2,\eps}(x_2^{(j)})\V_\eps \right\rangle_{\delta}
        \end{equation*}
        is in $L^p(\C)$.
\end{enumerate}

\end{lemma}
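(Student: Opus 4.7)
The strategy is to treat the five items separately, combining the explicit representation~\eqref{expressioncorrels} of Theorem~\ref{propcorrel1} with Girsanov shifts and standard fusion estimates for GMC. For item (1) I would argue by a direct pointwise bound: a short computation gives $\Delta_{\gamma e_i}=\ps{\frac{\gamma e_i}{2}, Q-\frac{\gamma e_i}{2}}=1$, using $\ps{e_i,Q}=q$ and $\ps{e_i,e_i}=2$, so the deterministic prefactor of $V_{\gamma e_i,\eps}(x)$ equals $\hat g(x)\leq C(1+\norm{x})^{-4}$. For fixed $\eps>0$ the regularized field $\X_\eps$ is smooth with pointwise bounded Gaussian variance, so the remaining exponential factors have uniformly bounded moments; since $\theta_\delta\leq 1$, the GMC exponential weight stays bounded above by $1$. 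Integrating over the zero mode $\bm c$ yields the stated bound, uniformly in $\delta$.

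For items (2)--(4) I would work directly with the $\eps=\delta=0$ formula~\eqref{expressioncorrels}. In (2), the Coulomb-gas prefactor $\prod_{j<k}\norm{z_j-z_k}^{-\ps{\alpha_j,\alpha_k}}$ is uniformly bounded on $U_\rho$, and after Girsanov absorption of the auxiliary Vertex Operators $V_{\gamma e_i}$, the decay in each $x_i^{(j)}$ is carried by the factor $\hat g(x_i^{(j)})$, while the negative $s_i$-moments of the GMC stay finite under the extended Seiberg bounds~\eqref{lemmaSeibergplus}. Item (3), the fusion estimate as $z_1,z_2$ come close, is the main technical obstacle: the plan is to Girsanov-shift $\X$ by a field with combined charge $\alpha_1+\alpha_2$ concentrated near the fusion point, producing the electrostatic factor $\norm{z_1-z_2}^{-\ps{\alpha_1,\alpha_2}}$ and reducing the GMC integral to one with a single effective insertion of weight $\alpha_1+\alpha_2$ at that point. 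The subcritical assumption $\ps{\alpha_1+\alpha_2-Q,e_1}<0$ controls the negative moment of $M^{\gamma e_1}$ via Kahane's convexity inequality at the cost of an arbitrarily small logarithmic loss $-\eta$; on the $e_2$-direction the same argument applies when $\ps{\alpha_1+\alpha_2-Q,e_2}\leq 0$, while in the reflection regime $\ps{\alpha_1+\alpha_2-Q,e_2}>0$ an additional Cameron--Martin shift of the zero mode $\bm c$ in the direction $\omega_2$ produces precisely the Gaussian correction $\frac12\ps{\alpha_1+\alpha_2-Q,e_2}^2$. Item (4) then follows as a case-by-case verification: since $\ps{\alpha_1,\alpha_2}\in\{2\gamma^2,-\gamma^2\}$ and $\gamma\in(0,\sqrt 2)$, a direct substitution shows that the exponent of (3) is strictly greater than $-2$ in all four possible combinations of $(\alpha_1,\alpha_2)\in\{\gamma e_1,\gamma e_2\}^2$.

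Finally item (5) combines these estimates: the pointwise bound from (1)--(2) gives the decay $(1+\norm{x_1^{(1)}})^{-4}$ away from the insertions, which is $L^p$-integrable at infinity for every $p\geq 1$, while near each singularity the fusion bound of (3)--(4) yields a local singularity of order strictly less than $2$, hence $L^p$-integrable locally for some $p>1$ depending on $\gamma$ and on the relevant weight $\ps{\alpha_0,e_1}$. Splitting $\C$ into a large ball and small neighborhoods around the singularities concludes.
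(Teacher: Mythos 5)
Your identification of $\Delta_{\gamma e_i}=1$ and the resulting $\hat g(x)\leq C(1+\norm{x})^{-4}$ decay is correct, but the argument you give for item (1) has a genuine gap: you cannot bound the GMC exponential weight $e^{-\sum_i\mu_i e^{\gamma\ps{e_i,\bm c}}M^{\gamma e_i}_{\eps,\delta}(\C)}$ by $1$ and then "integrate over the zero mode". After that step the $\bm c$-integral becomes $\int_{\mathfrak{h}_3^*}e^{\ps{\sum_k\alpha_k+\gamma(r_1e_1+r_2e_2)-2Q,\bm c}}d\bm c$, which diverges precisely because the Seiberg bounds force $s_i>0$; the double-exponential damping is what renders the zero-mode integral finite, and discarding it destroys the statement. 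The correct route (the one the paper takes) is to perform the $\bm c$-integral exactly, which produces $\Gamma(s_i)\mu_i^{-s_i}$ times \emph{negative moments} of the GMC masses as in~\eqref{expressioncorrels}, and then to track the large-$\norm{x}$ behaviour of both the Coulomb prefactor (including the cross terms $\norm{x-z_k}^{-\gamma\ps{\alpha_k,e_i}}$ and $\norm{x_i^{(l)}-x_j^{(m)}}^{-\frac{\gamma^2}{2}A_{ij}}$) and of those negative moments, whose integrands also depend on $x$. The exponent $-4$ then emerges from a cancellation of the form $\gamma\ps{-2Q+\gamma r_1e_1+\gamma r_2e_2,e_i}$ combined with the cross-term exponents, not from $\hat g(x)^{\Delta_{\gamma e_i}}$ alone. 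Relatedly, $\theta_\delta\leq 1$ makes the $\delta$-regularized weight \emph{larger}, so it does not by itself give a bound uniform in $\delta$; one needs the monotonicity of $\theta_\delta$ in $\delta$ (or a lower bound on $M^{\gamma e_i}_{\eps,\delta}(\C)$ by the mass of a fixed region away from $z_0$).

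For item (3) your sketch also omits the key structural device of the paper's proof: an application of H\"older's inequality to decouple the two chaos directions, so that the subcritical direction is handled by the finiteness of negative moments (\cite[Lemma 4.1]{Toda_construction}) while the one-field Liouville freezing estimate \cite[Lemma 6.5]{KRV_loc} is applied to the supercritical direction alone, producing the correction $\tfrac12\ps{\alpha_1+\alpha_2-Q,e_2}^2$. A Cameron--Martin shift of the zero mode does not by itself explain how to control a joint negative moment of two \emph{correlated} GMC measures, which is the actual difficulty here. Your verifications for items (4) and (5), and the overall splitting of $\C$ into a neighbourhood of infinity and neighbourhoods of the singular points, are consistent with the paper once items (1)--(3) are repaired.
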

As a first application of these technical results we provide a rather useful identity. This identity is derived from the $\mu$-dependence of the correlation functions, usually referred to as a \lq\lq KPZ-relation"~\cite{KPZ}.
\begin{lemma}[KPZ identity]\label{KPZ_integral}
Assume that the $\bm{z}$ are distinct and that the $\bm{\alpha}$ are such that the Seiberg bounds~\eqref{seiberg_bounds} hold. Then for non-negative $\eps$ and $\delta$, the following equality holds in $\mathfrak{h}_3^*$:
\begin{equation}
    \left(\sum_{k=1}^N\alpha_k-2Q\right)\ps{\V_\eps}_\delta=\gamma\sum_{i=1}^{2}\mu_ie_i\int_\C\theta_\delta(z_0-x)\ps{V_{\gamma e_i,\eps}(x)\V_\eps}_\delta d^2x.
\end{equation}
\end{lemma}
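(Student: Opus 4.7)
The plan is to derive the identity as an infinitesimal integration by parts in the zero-mode $\bm c\in\mathfrak{h}_3^*$, exploiting the formal invariance of the path integral under the shift $\bm c\mapsto\bm c+t\omega_i$. This is the Toda analogue of the classical ``KPZ-type'' identity used in Liouville theory. I would start from
$$\ps{\V_\eps}_\delta = \int_{\mathfrak{h}_3^*} e^{-2\ps{Q,\bm c}}\,\E\!\left[\V_\eps\,e^{-\sum_{i=1}^{2}\mu_i e^{\gamma\ps{e_i,\bm c}} M^{\gamma e_i}_{\eps,\delta}(\C)}\right]d\bm c$$
and observe that the $\bm c$-dependence is fully explicit: one can factor $\V_\eps = e^{\ps{\sum_k\alpha_k,\bm c}}\bm{\mathrm A}_\eps$ with $\bm{\mathrm A}_\eps$ independent of $\bm c$. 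Taking the gradient $\nabla_{\bm c}$ of the integrand under the integral sign produces exactly two contributions: a prefactor term proportional to $\sum_k\alpha_k-2Q$, and a GMC term carrying $-\gamma e_i \mu_i e^{\gamma\ps{e_i,\bm c}} M^{\gamma e_i}_{\eps,\delta}(\C)$. Vanishing of the integral of this total derivative will give the desired identity up to an algebraic rearrangement.

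The rearrangement I would perform uses the elementary fact $\Delta_{\gamma e_i}=1$, immediate from $\ps{e_i,Q}=q$ and $\ps{e_i,e_i}=2$. This collapses the vertex operator definition into the clean identity $V_{\gamma e_i,\eps}(x)\,d^2x = e^{\gamma\ps{e_i,\bm c}}\,M^{\gamma e_i}_\eps(dx)$ after absorbing the $\bm c$-exponential into the remaining factors. Expanding $M^{\gamma e_i}_{\eps,\delta}(\C)=\int_{\C}\theta_\delta(x-z_0)\,M^{\gamma e_i}_\eps(dx)$ and applying Fubini then converts the $e_i$ contribution into $\gamma\mu_i e_i\int_{\C}\theta_\delta(z_0-x)\ps{V_{\gamma e_i,\eps}(x)\V_\eps}_\delta\,d^2x$, and summing over $i\in\{1,2\}$ produces the claimed right-hand side as a vector equality in $\mathfrak{h}_3^*$.

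The main (and essentially only) obstacle is the analytic justification of the differentiation under the $\bm c$-integral and of Fubini. Parametrising $\bm c=c_1\omega_1+c_2\omega_2$, the prefactor coming from the vertex operators equals $e^{\gamma(s_1 c_1+s_2 c_2)}$, which decays as $c_i\to-\infty$ thanks to the Seiberg bound $s_i>0$, while the doubly-exponential GMC term gives super-polynomial decay as $c_i\to+\infty$; for fixed $\eps,\delta>0$ the random mass $M^{\gamma e_i}_{\eps,\delta}(\C)$ is almost surely strictly positive and has negative moments of all orders, which is what one needs to control the boundary terms uniformly under the expectation. The case $\eps=\delta=0$ of the statement will then be obtained by passing to the limit, using the uniform estimates of Lemma~\ref{integrability} together with Theorem~\ref{propcorrel1}.
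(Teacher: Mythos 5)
Your proof is correct, but it takes a different route from the one in the paper. You obtain the identity as a vanishing total derivative in the zero mode: differentiating the integrand in the direction $\omega_i$ produces the prefactor term $\gamma s_i=\ps{\sum_k\alpha_k-2Q,\omega_i}$ and the GMC term $-\mu_i\gamma e^{\gamma\ps{e_i,\bm c}}M^{\gamma e_i}_{\eps,\delta}(\C)$ (only the $i$-th potential survives because $\ps{e_j,\omega_i}=\delta_{ij}$), and the integral of this derivative over $\mathfrak h_3^*$ vanishes thanks to the decay you describe at $c_i\to\pm\infty$. The paper instead differentiates the correlation function with respect to the cosmological constant $\mu_i$: on one hand $\partial_{\mu_i}$ brings down the total GMC mass, which Fubini converts into $-\int_\C\theta_\delta(z_0-x)\ps{V_{\gamma e_i,\eps}(x)\V_\eps}_\delta\,d^2x$; on the other hand the zero-mode shift $\bm c\mapsto\bm c-\tfrac{\ln\mu_i}{\gamma}\omega_i$ shows $\ps{\V_\eps}_{\mu_1,\mu_2,\delta}=\mu_1^{-s_1}\mu_2^{-s_2}\ps{\V_\eps}_{1,1,\delta}$, so the same derivative equals $-\tfrac{s_i}{\mu_i}\ps{\V_\eps}_\delta$, and identification gives the componentwise identity. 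The two arguments are two faces of the same translation invariance of the zero-mode integral, and both reduce the analytic work to the same two points (Fubini for the GMC mass, and absolute convergence/decay of the $\bm c$-integral guaranteed by $s_i>0$ and the a.s. positivity of the regularized mass). Your version is slightly more self-contained in that it never invokes the scaling relation in $\mu$, at the price of having to justify the vanishing of the boundary terms in the $\bm c$-integration by parts directly, which you do. Your observation that $\Delta_{\gamma e_i}=1$, so that $e^{\gamma\ps{e_i,\bm c}}M^{\gamma e_i}_\eps(dx)=V_{\gamma e_i,\eps}(x)\,d^2x$, is exactly the bookkeeping the paper uses implicitly, and your treatment of the $\eps,\delta\to0$ cases via Lemma~\ref{integrability} matches the paper's.
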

\begin{proof}
It suffices to check that for all $i=1,2$ one has that
\begin{equation*}
\ps{\sum_{k=1}^N\alpha_k-2Q,\omega_i}\ps{\V_\eps}_\delta=\mu_i\gamma \int_\C\theta_\delta(z_0-x)\ps{V_{\gamma e_i,\eps}(x)\V_\eps}_\delta d^2x.
\end{equation*}
To do so, we differentiate with respect to $\mu_i$ the expression
\begin{equation*}
  \ps{\V_\eps}_{\mu_1,\mu_2,\delta}=\int_{ \mathfrak{h}_3^*}   e^{-\ps{2Q,\bm c}} \E \left [\V_\eps e^{- \sum_{i=1}^{2} \mu_i e^{\gamma  \langle \bm c,e_i\rangle }M^{ \gamma e_i}_{\eps,\delta}(\C)} \right  ]d\bm c
\end{equation*}
to get for $i=1,2$ the following:
\begin{equation*}
-\int_{ \mathfrak{h}_3^*}   e^{-\ps{2Q,\bm c}} \E \left [\V_\eps e^{\gamma  \langle \bm c,e_i\rangle }M^{ \gamma e_i}_{\eps,\delta}(\C)e^{- \sum_{i=1}^{2} \mu_i e^{\gamma  \langle \bm c,e_i\rangle }M^{ \gamma e_i}_{\eps,\delta}(\C)} \right  ]d\bm c.
\end{equation*}
By positivity of the above quantities and Fubini-Tonelli, the latter expression is equal to
\begin{equation*}
-\int_\C\theta_\delta(z_0-x)\int_{ \mathfrak{h}_3^*}   e^{-\ps{2Q,\bm c}} \E \left [\V_\eps V_{\gamma e_i,\eps}(x)e^{- \sum_{i=1}^{2} \mu_i e^{\gamma  \langle \bm c,e_i\rangle }M^{ \gamma e_i}_{\eps,\delta}(\C)} \right]d\bm cd^2x
\end{equation*}
where we have used the fact that $M_{\eps,\delta}^{\gamma e_i}(dx)=\theta_\delta(z_0-x)V_{\gamma e_i,\eps}(x)d^2x$. This yields 
\begin{equation*}
\frac{\partial}{\partial{\mu_i}}\ps{\V_\eps}_{\mu_1,\mu_2,\delta}=-\int_\C\theta_\delta(z_0-x)\ps{V_{\gamma e_i,\eps}(x)\prod_{k=1}^NV_{\alpha_k,\eps}(z_k)}_\delta d^2x.
\end{equation*}
However, by making a change of variables in the zero-mode $\bm c\to \bm c- \frac\rho\gamma\ln\mu_i$, the $\mu_i$-dependence of the correlation functions is explicit and in agreement with Equation~\eqref{expressioncorrels}:
\begin{equation*}
\ps{\prod_{k=1}^NV_{\alpha_k,\eps}(z_k) }_{\mu_1,\mu_2,\delta}=\mu_1^{-s_1}\mu_2^{-s_2}\ps{\prod_{k=1}^NV_{\alpha_k,\eps}(z_k)}_{1,1,\delta}.
\end{equation*}
When we differentiate it with respect to $\mu_i$, we end up with (recall Equation~\eqref{eq:definition_si})
\begin{equation*}
-\frac{\ps{\sum_{k=1}^N\alpha_k-2Q,\omega_i}}{\mu_i\gamma}\ps{\prod_{k=1}^NV_{\alpha_k,\eps}(z_k)}_{\mu_1,\mu_2,\delta}.
\end{equation*}
We recover the desired result for positive $\eps$ and $\delta$ by identification.

For the $\eps\to 0$ limit, we use the last item of Lemma~\ref{integrability}. Finally, all arguments go in the same way without the $\delta$-regularization.
\end{proof} 

%%%%%%%%%%%%%%%%%%%%%%%%%%%%%%%%%%%%%%%%%%%%
\subsubsection{Gaussian integration by parts}
When computing the Ward identity~\eqref{Ward_higher}, we insert the spin-three current $\bm{\mathrm{W}}(z_0)$ within the correlation functions. Put differently we will need to make sense of terms that take the form 
\begin{equation*}
    \Big\langle\ps{\alpha,\partial^p\varphi_\eps(z_0)}\bm{\mathrm V}_\eps\Big\rangle_\delta.
\end{equation*}
It is important to highlight that the extra $\delta$-regularization introduced is done with respect to a special point $z_0$, which does not appear within the Vertex Operators $\V_\eps$.

The following proposition explains how to apply Gaussian integration by parts in our context in order to treat the insertion of this current.
\begin{proposition}\label{GaussianIPP}
Let $p$ be a positive integer and $\alpha$ in $\mathfrak{h}_3^*$. Then for any positive $\eps$ and $\delta$:
\begin{equation*}
\begin{split}
&\Big\langle\ps{\alpha,\partial^p\varphi_\eps(z_0)}\bm{\mathrm V}_\eps\Big\rangle_\delta\\
=&\frac{(-1)^p(p-1)!}{2}\left(\sum_{k=1}^N\frac{\ps{\alpha,\alpha_k}}{(z_0-z_k)_\eps^p}\ps{\bm{\mathrm V}_\eps}_\delta -\sum_{i=1}^2\mu_i
\ps{\alpha,\gamma e_i}\int_\C\frac{\theta_\delta(z_0-x)}{(z_0-x)^p_\eps}\ps{V_{\gamma e_i,\eps}(x)\bm{\mathrm V}_\eps}_\delta d^2x\right).
\end{split}
\end{equation*}
\end{proposition}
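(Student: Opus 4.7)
The strategy is to isolate the Gaussian and deterministic parts of $\partial^p\varphi_\eps(z_0)$, then apply a Gaussian integration-by-parts formula of Cameron--Martin type. Since $\bm c$ is constant and mollifies to itself, for $p\ge 1$ one has
\[
\partial^p\varphi_\eps(z_0)=\partial^p\X_\eps(z_0)+\frac{Q}{2}\partial^p(\ln\hat g)_\eps(z_0),
\]
and the second, deterministic summand factors out of the path integral \eqref{correl_approx_bis} to contribute $\tfrac12\ps{\alpha,Q}\,\partial^p(\ln\hat g)_\eps(z_0)\,\ps{\V_\eps}_\delta$.

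For the centered Gaussian linear functional $Z:=\ps{\alpha,\partial^p\X_\eps(z_0)}$, I would apply Gaussian IBP against the functional $F(\X_\eps)=\V_\eps\exp\bigl(-\sum_i\mu_i e^{\gamma\ps{\bm c,e_i}}M_{\eps,\delta}^{\gamma e_i}(\C)\bigr)$. Each Vertex Operator $V_{\alpha_k,\eps}(z_k)=\hat g(z_k)^{\Delta_{\alpha_k}}e^{\ps{\alpha_k,\X_\eps(z_k)+\bm c}-\frac12\E[\ps{\alpha_k,\X_\eps(z_k)}^2]}$ contributes upon differentiation a factor $\ps{\alpha_k,\X_\eps(z_k)}$, yielding $\ps{\alpha,\alpha_k}\,\partial^p_{z_0}C_\eps(z_0,z_k)\,\ps{\V_\eps}_\delta$, where $C_\eps$ is the twice-mollified Green kernel such that $\E[\ps{u,\X_\eps(x)}\ps{v,\X_\eps(y)}]=\ps{u,v}C_\eps(x,y)$. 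Since $\Delta_{\gamma e_i}=\tfrac\gamma2\ps{e_i,Q}-\tfrac{\gamma^2}{2}=1$ (using $\ps{e_i,\rho}=1$ and $\ps{e_i,e_i}=2$), differentiating the GMC exponential and matching $\hat g(x)V^{\mathrm{pure}}_{\gamma e_i,\eps}(x)\,d^2x$ with the integrand defining $V_{\gamma e_i,\eps}(x)$ under the zero-mode integral produces a contribution
\[
-\sum_i\mu_i\ps{\alpha,\gamma e_i}\int_\C\theta_\delta(z_0-x)\,\partial^p_{z_0}C_\eps(z_0,x)\,\ps{V_{\gamma e_i,\eps}(x)\V_\eps}_\delta\,d^2x.
\]
Rigorously justifying IBP on the nonlinear GMC functional is the main technical step; it can be carried out via the Cameron--Martin shift $\X_\eps\mapsto\X_\eps+tJ_Z$ where $J_Z$ is the Riesz representer of $Z$, differentiating the Radon--Nikodym density at $t=0$. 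The estimates of Lemma \ref{integrability} legitimize exchanging the derivative with $\int d^2x$.

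The covariance derivative is then explicit: from $G_{\hat g}(x,y)=-\ln|x-y|-\tfrac14(\ln\hat g(x)+\ln\hat g(y))+\ln 2-\tfrac12$ and $\partial^p_x(-\ln|x-y|)=\tfrac{(-1)^p(p-1)!}{2(x-y)^p}$, double mollification gives
\[
\partial^p_{z_0}C_\eps(z_0,y)=\frac{(-1)^p(p-1)!}{2(z_0-y)_\eps^p}-\frac14\partial^p(\ln\hat g)_\eps(z_0),
\]
where the identification of the regularized singular kernel with the paper's $1/(x)_\eps^p$ is obtained via Lemma \ref{approx_inverse} and a holomorphic integration by parts in the mollifier.

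Finally, collecting the three contributions proportional to $\partial^p(\ln\hat g)_\eps(z_0)$---the direct term $+\tfrac12\ps{\alpha,Q}\ps{\V_\eps}_\delta$, the Vertex Operator sum $-\tfrac14\ps{\alpha,\sum_k\alpha_k}\ps{\V_\eps}_\delta$, and the GMC term $+\tfrac14\sum_i\mu_i\ps{\alpha,\gamma e_i}\int\theta_\delta(z_0-x)\ps{V_{\gamma e_i,\eps}(x)\V_\eps}_\delta\,d^2x$---and rewriting the last one as $+\tfrac14\ps{\alpha,\sum_k\alpha_k-2Q}\ps{\V_\eps}_\delta$ by the KPZ identity (Lemma \ref{KPZ_integral}), the three terms cancel since $\tfrac12\ps{\alpha,Q}-\tfrac14\ps{\alpha,\sum_k\alpha_k}+\tfrac14\ps{\alpha,\sum_k\alpha_k-2Q}=0$. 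What remains recomposes exactly the right-hand side of the proposition. The main subtlety I anticipate is not the IBP itself but spotting that the $\ln\hat g$-corrections cancel only via the KPZ relation: this is the hidden algebraic identity responsible for the clean form of the formula.
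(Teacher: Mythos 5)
Your proposal is correct and follows essentially the same route as the paper: Gaussian integration by parts on the regularized field, explicit differentiation of the Green kernel \eqref{Green_round}, and cancellation of the $\partial^p\ln\hat g_\eps$ terms via the KPZ identity of Lemma~\ref{KPZ_integral}. The only cosmetic difference is that the paper applies the IBP to the undifferentiated field and then takes $\partial^p_{z_0}$ of the resulting identity (justifying the IBP on the GMC functional by Riemann-sum approximation rather than a Cameron--Martin shift), whereas you differentiate first; the content is identical.
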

\begin{proof}
We recall the Gaussian integration by parts formula for a centered Gaussian vector $(X,Y_1,\dots,Y_N)$ and $f$ a smooth function on $\R^N$ with bounded derivatives:
\begin{equation}\label{eq:Gauss_IPP}
\E\left[Xf(Y_1,\dots,Y_N)\right]=\sum_{k=1}^N\E\left[XY_k\right]\E\left[\partial_{Y_k}f(Y_1,\dots,Y_N)\right],
\end{equation}
as well as the defining expression which follows from Equation~\eqref{def:TPI}:
\begin{equation*}
\begin{split}
&\left\langle \langle\alpha,\X_\eps+\frac{Q}{2}\ln\hat g_\eps\rangle(z_0)\bm{\mathrm V}_\eps\right\rangle_{\delta}
=\int_{\mathfrak{h}_3^*}e^{-\ps{2Q,\bm c}}\E \left [\langle\alpha,\X_\eps+\frac{Q}{2}\ln\hat g_\eps\rangle(z_0)\V_\eps e^{- \sum_{i=1}^{2}  \mu_ie^{\gamma  \langle\bm{c},e_i\rangle}M^{\gamma e_i,\hat g}_{\eps,\delta}(\C)}\right]d\bm c.
\end{split}
\end{equation*}
In order to apply Gaussian integration by parts~\eqref{eq:Gauss_IPP} to the expectation term \[
\E \left [\langle\alpha,\X_\eps+\frac{Q}{2}\ln\hat g_\eps\rangle(z_0)\V_\eps e^{- \sum_{i=1}^{2}  \mu_ie^{\gamma  \langle\bm{c},e_i\rangle}M^{\gamma e_i,\hat g}_{\eps,\delta}(\C)}\right],
\]
we may write the integral over the complex plane that appears in the exponential term, $M^{\gamma e_i,\hat g}_{\eps,\delta}(\C)$ as a Riemann sum. Since we can apply Equation~\eqref{eq:Gauss_IPP} to the partial sum and then take the limit, this means that we may directly apply Gaussian integration by parts to the regularized correlation functions to get that
\begin{equation*}
\begin{split}
    \left\langle\langle\alpha,\X_\eps+\frac Q2\ln\hat g_\eps\rangle(z_0)\bm{\mathrm V}_\eps\right\rangle_\delta&=\left(\sum_{k=1}^N\langle\alpha,\alpha_k\rangle G_{\hat g,\eps}(z_0,z_k) +\langle\alpha,\frac Q2\ln\hat  g_\eps\rangle(z_0)\right)\ps{\bm{\mathrm V}_\eps}_\delta\\
    &-\sum_{i=1}^2\frac{\mu_i\gamma}2\ps{\alpha,e_i}\int_\C\theta_\delta(z_0-x)G_{\hat g,\eps}(z_0,x)\ps{V_{\gamma e_i,\eps}(x)\bm{\mathrm V}_\eps}_\delta d^2x,
\end{split}
\end{equation*}
where $G_{\hat g,\eps}$ is the covariance kernel of the regularized field $\X_\eps$. One key point is that when taking derivatives of the Toda field we no longer take into account the zero mode $\bm{c}$ in the field $\varphi_\eps$. As a consequence we see that for positive $p$, 
\begin{equation*}
\Big\langle\ps{\alpha,\partial^p\varphi_\eps(z_0)}\bm{\mathrm V}_\eps\Big\rangle_\delta=\Big\langle\langle\alpha,\partial^p\left(\X_\eps+\frac{Q}{2}\ln\hat g_\eps\right)\rangle(z_0)\bm{\mathrm V}_\eps\Big\rangle_\delta\cdot
\end{equation*}
Therefore we end up with:
\begin{equation*}
\begin{split}
    \left\langle\langle\alpha,\partial^p\left(\X_\eps+\frac Q2\ln\hat g_\eps\right)(z_0)\rangle\bm{\mathrm V}_\eps\right\rangle_\delta=&\partial^p\left(\sum_{k=1}^N\ps{\alpha,\alpha_k}G_{\hat g,\eps}(z_0,z_k) +\langle\alpha,\frac Q2\ln\hat  g_\eps\rangle(z_0)\right)\ps{\bm{\mathrm V}_\eps}_\delta\\
    -&\sum_{i=1}^2\frac{\mu_i\gamma}2\ps{\alpha,e_i}\int_\C\theta_\delta(z_0-x)\partial^p G_{\hat g,\eps}(z_0,x)\ps{V_{\gamma e_i,\eps}(x)\bm{\mathrm V}_\eps}_\delta d^2x.
\end{split}
\end{equation*}
Note that here we do \textbf{not} take derivatives of $\theta_\delta$ in the integral: one should think of the equation above as taking the derivative of the field $\varphi_\eps(z)$ then evaluate at $z=z_0$.

From the explicit expression of the Green function $G_{\hat g}$ in the round metric~\eqref{Green_round} we get
\begin{equation*}
\begin{split}
    &\Big\langle\ps{\alpha,\partial^p\varphi_\eps(z_0)}\bm{\mathrm V}_\eps\Big\rangle_\delta\\
    ={}&\frac{(-1)^p(p-1)!}{2}\left(\sum_{k=1}^N\frac{\ps{\alpha,\alpha_k}}{(z_0-z_k)^p_\eps}-\sum_{i=1}^2\mu_i\ps{\alpha,\gamma e_i}\int_\C\frac{\theta_\delta(z_0-x)}{(z_0-x)^p_\eps}\ps{V_{\gamma e_i,\eps}(x)\bm{\mathrm V}_\eps}_\delta d^2x\right)\\
    &+\frac{1}{4}\partial^p\ln\hat g_\eps(z_0)\left\langle\alpha,\left(\sum_{k=1}^N\alpha_k-2Q\right)\ps{V_\eps}_\delta-\sum_{i=1}^2\mu_i\gamma e_i\int_\C\theta_\delta(z_0-x)\ps{V_{\gamma e_i,\eps}(x)\bm{\mathrm V}_\eps}_\delta d^2x\right\rangle.
\end{split}
\end{equation*}
The above integrals are indeed well-defined thanks to item (5) of Lemma~\ref{integrability}. Now we can use the KPZ identity~\eqref{KPZ_integral} to see that the metric-dependent term ---the last line--- equals zero.
\end{proof}
As can be seen from the expression of $\bm{W}$, we will need to extend the previous statement to products of derivatives of the field. The proper way to treat such terms is to apply recursively the following formula, that follows from Proposition~\ref{GaussianIPP}
\begin{equation}\label{eq:IPP_product}
\begin{split}
    \frac{2}{(-1)^{p_r}(p_r-1)!}&\left\langle:\prod_{l=1}^r\ps{\beta_l,\partial^{p_l}\varphi_\eps(z_0)}:\V_\eps\right\rangle_\delta\\
    &=\sum_{k=1}^N\frac{\ps{\beta_r,\alpha_k}}{(z_0-z_k)_\eps^{p_r}}\ps{:\prod_{l=1}^{r-1}\ps{\beta_l,\partial^{p_l}\varphi_\eps(z_0)}:\bm{\mathrm V}_\eps}_\delta\\
    &-\sum_{i=1}^2\mu_i\ps{\beta_r,\gamma e_i}\int_\C\frac{\theta_\delta(z_0-x)}{(z_0-x)^p_\eps}\ps{:\prod_{l=1}^{r-1}\ps{\beta_l,\partial^{p_l}\varphi_\eps(z_0)}:V_{\gamma e_i,\eps}(x)\bm{\mathrm V}_\eps}_\delta d^2x,
\end{split}
\end{equation}
valid for $p_1,\cdots,p_r$ positive integers and $\beta_1,\cdots,\beta_r$ any elements of $\mathfrak{h}_3^*$. This can be derived using the same reasoning and the definition of the Wick product as before. 
\begin{remark}
We assume in the sequel that $\mu_1=\mu_2=\mu$ for better readability. This is not restrictive thanks to the explicit dependence in $\mu_1$, $\mu_2$ of the correlation functions.
\end{remark}

\subsection{Derivatives of the correlation functions}
We turn to the study of the regularity of Toda correlation functions. In this subsection, we keep the $\delta$-regularization around the point $z_0$, but this point is now a localization where a Vertex Operator is evaluated (instead of a point where the currents were elavuated as it was before). Therefore the extra $\delta$-regularization is now made around a singular point which appears in the product of Vertex Operators.

We do so to make the proof more natural and describe a general algorithmic procedure to show that the Toda correlation functions are smooth. In the present document we will not prove the smoothness, and only require the weaker result:
\begin{proposition}\label{regularity}
Assume that the weights $\bm\alpha=(\alpha_0,\alpha_1,\cdots,\alpha_N)$ are such that the Seiberg bounds~\eqref{seiberg_bounds} hold.
Then $z_0\mapsto \ps{\prod_{k=0}^NV_{\alpha_k}(z_k)}$ is $C^2$ on the set $\C\setminus\lbrace z_1,\cdots,z_N\rbrace$.
\end{proposition}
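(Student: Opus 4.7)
My plan is to differentiate on the $\eps$-regularized level, where everything is smooth, and then pass to the limit $\eps\to 0$ by dominated convergence using the uniform-in-$\eps$ estimates of Lemma~\ref{integrability}. Writing $F_\eps(z_0):=\ps{V_{\alpha_0,\eps}(z_0)\bm{\mathrm V}_\eps}$, the smoothness of the mollified field $\X_\eps$ gives $F_\eps\in C^\infty$ in $z_0$ by straightforward differentiation under the expectation. A variant of Proposition~\ref{GaussianIPP}, in which $V_{\alpha_0,\eps}(z_0)$ is treated as one of the Vertex Operators in the product and the would-be $k=0$ self-contraction is absorbed into the Wick normalization built into $V_{\alpha_0,\eps}$, yields
\begin{equation*}
\partial_{z_0}F_\eps(z_0)=-\tfrac{1}{2}\sum_{k=1}^N\frac{\ps{\alpha_0,\alpha_k}}{(z_0-z_k)_\eps}F_\eps(z_0)+\tfrac{\mu\gamma}{2}\sum_{i=1}^2\ps{\alpha_0,e_i}\int_\C\frac{\ps{V_{\gamma e_i,\eps}(x)V_{\alpha_0,\eps}(z_0)\bm{\mathrm V}_\eps}}{(z_0-x)_\eps}\,d^2x,
\end{equation*}
the metric-dependent contributions cancelling via the KPZ identity of Lemma~\ref{KPZ_integral}.

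I would then pass to $\eps\to 0$ by dominated convergence. Pointwise convergence of the regularized kernels follows from Lemma~\ref{approx_inverse}, and the regularized correlations themselves converge by Theorem~\ref{propcorrel1}. For the integral term the domination splits into two regimes: away from $\{z_0,z_1,\dots,z_N\}$ the $L^p$ bound of item~(5) of Lemma~\ref{integrability} gives uniform-in-$\eps$ control, while near $x=z_0$ the fusion estimate of item~(3) of Lemma~\ref{integrability}, applied to the pair of weights $(\gamma e_i,\alpha_0)$, provides the correct near-diagonal bound. This produces an explicit formula for $\partial_{z_0}F$, whose continuity in $z_0$ on $\C\setminus\{z_1,\dots,z_N\}$ follows from one further dominated convergence argument based on the uniform estimates of item~(2) of Lemma~\ref{integrability}.

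Iterating the procedure gives the second derivatives. The cross derivative $\partial_{\bar z_0}\partial_{z_0}F_\eps$ is handled identically using the anti-holomorphic analogue of Proposition~\ref{GaussianIPP}. At the second order there are three classes of terms: pointwise factors of the form $(z_0-z_k)^{-2}F(z_0)$ and $(z_0-z_k)^{-1}(z_0-z_{k'})^{-1}F(z_0)$, which are trivially continuous; single integrals with kernel $(z_0-x)^{-2}$ against a correlation with one extra insertion $V_{\gamma e_i}(x)$; and double integrals against two insertions $V_{\gamma e_i}(x_1)V_{\gamma e_j}(x_2)$, handled by items~(1)-(4) of Lemma~\ref{integrability}. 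Combining continuity of each piece gives the $C^2$ conclusion.

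The main obstacle is the single integral at second order near $x=z_0$: the combined singularity coming from $(z_0-x)^{-2}$ and from the fusion of $V_{\gamma e_i}(x)$ with $V_{\alpha_0}(z_0)$ has to remain locally integrable. The sharp exponent of this fusion is provided by item~(3) of Lemma~\ref{integrability} and depends on whether the inserted weight $\alpha_0+\gamma e_i$ triggers the reflection correction, so a short case analysis on the sign of $\ps{\alpha_0+\gamma e_i-Q,e_j}$ is unavoidable. This is precisely where the argument becomes sharp, and it is what forces the $C^2$ threshold in the present statement, consistently with the authors' remark that full smoothness is not pursued here.
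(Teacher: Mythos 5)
Your derivative formula and the use of the KPZ identity to cancel the metric terms are both correct, and they match the paper's starting point. The gap is in the convergence argument: dominated convergence does \emph{not} control the integral term near $x=z_0$, and this is precisely the difficulty the proposition exists to overcome. Concretely, item~(3) of Lemma~\ref{integrability} applied to the pair $(\gamma e_i,\alpha_0)$ gives a near-diagonal bound of order $\norm{z_0-x}^{-\gamma\ps{e_i,\alpha_0}-\eta+\frac12\ps{\alpha_0+\gamma e_i-Q,e_j}^2\mathds{1}_{\cdots}}$, and even after the reflection correction the total exponent tends to $\gamma^2-2$ as $\ps{\alpha_0,e_i}\uparrow q$; multiplied by the kernel $\norm{z_0-x}^{-1}$ this gives $\norm{z_0-x}^{\gamma^2-3}$, which is not locally integrable in two dimensions. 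The Seiberg bounds allow $\ps{\alpha_0,e_i}$ all the way up to $q=\gamma+\frac2\gamma$, so your dominating function is simply not in $L^1$ near $x=z_0$ already at first order (a H\"older pairing with item~(5) fails for the same reason: the available $L^p$ exponent can be arbitrarily close to $1$). At second order the situation is worse: $(z_0-x)^{-2}$ is not absolutely integrable near $x=z_0$ against \emph{any} correlation function that does not vanish there; convergence can only come from cancellation of the complex kernel, which no case analysis on signs of $\ps{\alpha_0+\gamma e_i-Q,e_j}$ and no dominated-convergence argument can produce. (Relatedly, your closing claim that this integrability issue "forces the $C^2$ threshold" contradicts Remark~\ref{section_appendix}-adjacent Remark in the paper, which notes the same method yields smoothness; $C^2$ is all that is needed, not all that is true.)

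The missing idea is the one the paper builds its proof around: keep the extra $\theta_\delta$ cutoff centred at $z_0$ and trade the singular kernel for derivatives via Stokes' formula. Writing $\frac{\gamma\ps{\alpha_0,e_i}}{2(z_0-x)_\eps}\ps{V_{\gamma e_i,\eps}(x)V_{\alpha_0,\eps}(z_0)\bm{\mathrm V}_\eps}_\delta$ as $\partial_x$ of that correlation function plus terms whose singularities sit at $z_1,\dots,z_N$ (hence away from $z_0$) plus a double integral with kernel $(x_1-x_2)_\eps^{-1}$, and then integrating by parts against $\partial_x\theta_\delta(z_0-x)$ over the annulus $A(z_0,\delta/2,r)$, one is left with a boundary term on $\partial B(z_0,r)$, harmless single integrals, a double integral over $A\times A$ that vanishes identically by antisymmetry in $x_1\leftrightarrow x_2$, and a double integral over $A\times B(z_0,r)^c$ whose only singularity is the fusion of two $\gamma e$-insertions, with exponent $-2\gamma^2$ or $-2\gamma^2+\frac{(3\gamma-2/\gamma)^2}{2}$, both strictly greater than $-2$ uniformly in $\bm\alpha$ (item~(4) and Lemma~\ref{fusion}). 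It is this combination of the $\delta$-cutoff, integration by parts, antisymmetry cancellation, and the \emph{pairwise} $\gamma e$-fusion estimate — not the fusion of $\gamma e_i$ with $\alpha_0$ — that makes both derivatives converge; without it your argument does not close.
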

The same results holds true for the other points $z_k\mapsto \ps{\prod_{k=0}^NV_{\alpha_k}(z_k)}$, $k=1,\dots,N$, over which no $\delta$-regularization is made.
\begin{proof}
Let us consider the $(\eps,\delta)$-regularized correlation functions $\ps{\bm{\mathrm V}_\eps}_\delta$, with $\delta$ small enough so that $r\coloneqq \frac12\min\limits_{1\leq k\leq N}\norm{z_0-z_k}>\delta$. In the following, we only consider the $\frac{\partial}{\partial z}$ derivative; the calculation for the $\frac{\partial}{\partial{\bar{z}}}$ derivative is exactly the same.

From the results of Lemma~\ref{integrability} and following the proof of Proposition~\ref{GaussianIPP}, we know that $\ps{V_{\alpha_0,\eps}(z_0)\bm{\mathrm V}_\eps}_\delta$ is differentiable with respect to $z_0$, with derivative
\begin{equation*}
\begin{split}
    &-\frac12\sum_{k=1}^N\frac{\ps{\alpha_0,\alpha_k}}{(z_0-z_k)_\eps}\ps{V_{\alpha_0,\eps}(z_0)\bm{\mathrm V}_\eps}_\delta\\
    &+\mu\sum_{i=1}^2\int_\C\left(\frac{\gamma\ps{\alpha_0,e_i}}2\frac{\theta_\delta(z_0-x) }{(z_0-x)_\eps}-\partial_{z_0}\theta_\delta(z_0-x)\right)\ps{V_{\gamma e_i,\eps}(x)V_{\alpha_0,\eps}(z_0)\bm{\mathrm V}_\eps}_\delta d^2x.
\end{split}
\end{equation*}
Notice the difference between this expression and the one we got when applying Gaussian integration by parts in Proposition~\ref{GaussianIPP}: the integral now contains derivatives of $\theta_\delta$. This arises since the singularity $z_0$ is now in the Vertex Operators instead of being in the current. 

From the asymptotics at infinity in Lemma~\ref{integrability}, the only issue when taking the $\eps,\delta\rightarrow0$ limit comes from the behaviour of the integral term near the singularity $\frac{1}{z_0-x}$. 
To treat it we use the fact that $\partial_{z_0}\theta_\delta(z_0-x)=-\partial_{x}\theta_\delta(z_0-x)$ to rewrite the integral as 
\begin{align*}
    &\mu\sum_{i=1}^2\left(\int_{B(z_0,r)^c}\frac{\gamma\ps{\alpha_0,e_i}}2\frac{\theta_\delta(z_0-x) }{(z_0-x)_\eps}\ps{V_{\gamma e_i,\eps}(x)V_{\alpha_0,\eps}(z_0)\bm{\mathrm V}_\eps}_\delta d^2x \right.\\
    &\left.+\int_{A(z_0,\delta/2,r)}\left(\frac{\gamma\ps{\alpha_0,e_i}}2\frac{\theta_\delta(z_0-x) }{(z_0-x)_\eps}+\partial_{x}\theta_\delta(z_0-x)\right)\ps{V_{\gamma e_i,\eps}(x)V_{\alpha_0,\eps}(z_0)\bm{\mathrm V}_\eps}_\delta d^2x\right),
\end{align*}
where $A(z_0,\delta/2,r)$ is the annulus of radii $\delta/2,r$ centered at $z_0$. We seperate out in this way the first integral over the domain $B(z_0,r)^c$, which is easily dealt with as $\eps,\delta$ go to zero thanks to item (2) of Lemma~\ref{integrability}.

For the other integral, we proceed by integration by parts to get,\footnote{That is, we apply Stokes' formula $\oint_{\partial B(z_0,r)}f(\xi)g(\xi) \frac{\sqrt{-1}d\bar\xi}2=\int_{B(z_0,r)}\partial_xf(x)g(x)+\partial_xg(x)f(x) d^2x$ to the term $\partial_{x}\theta_\delta(z_0-x)\ps{V_{\gamma e_i,\eps}(x)V_{\alpha_0,\eps}(z_0)\bm{\mathrm V}_\eps}_\delta$.} with $A=A(z_0,\delta/2,r)$,
\begin{equation*}
\begin{split}
&\mu\sum_{i=1}^2\left[\oint_{\partial B(z_0,r)}\frac{\theta_\delta(z_0-\xi) }{(z_0-\xi)_\eps}\ps{V_{\gamma e_i,\eps}(\xi)V_{\alpha_0,\eps}(z_0)\bm{\mathrm V}_\eps}_\delta \frac{\sqrt{-1}d\bar\xi}2\right. \\
&\quad\quad\left.-\int_{A}\sum_{k=1}^N\frac{\gamma\ps{\alpha_k,e_i}}2\frac{\theta_\delta(z_0-x)}{(z_k-x)_\eps}\ps{V_{\gamma e_i,\eps}(x)V_{\alpha_0,\eps}(z_0)\bm{\mathrm V}_\eps}_\delta d^2x\right]\\
&-\sum_{i,j=1}^2\frac{(\mu\gamma)^2\ps{e_i,e_j}}2\int_{A}\int_\C \frac{\theta_\delta(z_0-x_1)\theta_\delta(z_0-x_2)}{(x_1-x_2)_\eps}\ps{V_{\gamma e_i,\eps}(x_1)V_{\gamma e_j,\eps}(x_2)V_{\alpha_0,\eps}(z_0)\bm{\mathrm V}_\eps}_\delta d^2x_1d^2x_2.
\end{split}
\end{equation*}
The first two terms remain bounded in the $\eps,\delta\rightarrow0$ limit. By symmetry between the $x_1$ and $x_2$ variables, we rewrite the last term as
\begin{equation*}
\begin{split}
&\sum_{i,j=1}^2\frac{(\mu\gamma)^2\ps{e_i,e_j}}2\int_{A}\int_\C \frac{\theta_\delta(z_0-x_1)\theta_\delta(z_0-x_2)}{(x_1-x_2)_\eps}\ps{V_{\gamma e_i,\eps}(x_1)V_{\gamma e_j,\eps}(x_2)V_{\alpha_0,\eps}(z_0)\bm{\mathrm V}_\eps}_\delta d^2x_1d^2x_2\\
={}&\sum_{i,j=1}^2\frac{(\mu\gamma)^2\ps{e_i,e_j}}2\int_{A}\int_{B(z_0,r)^c}\frac{\theta_\delta(z_0-x_1)\theta_\delta(z_0-x_2)}{(x_1-x_2)_\eps}\ps{V_{\gamma e_i,\eps}(x_1)V_{\gamma e_j,\eps}(x_2)V_{\alpha_0,\eps}(z_0)\bm{\mathrm V}_\eps}_\delta d^2x_1d^2x_2\\
&+\int_{A^2}\frac{\theta_\delta(z_0-x_1)\theta_\delta(z_0-x_2)}{(x_1-x_2)_\eps}\sum_{i,j=1}^2\frac{(\mu\gamma)^2\ps{e_i,e_j}}2\ps{V_{\gamma e_i,\eps}(x_1)V_{\gamma e_j,\eps}(x_2)V_{\alpha_0,\eps}(z_0)\bm{\mathrm V}_\eps}_\delta d^2x_1d^2x_2,
\end{split}
\end{equation*}
where the last line is identically zero by symmetry between the $x_1$, $x_2$ variables. 

For the $C^1$ regularity it remains to show that when $\eps,\delta\rightarrow0$, the quantity
\[
\sum_{i,j=1}^2\frac{(\mu\gamma)^2\ps{e_i,e_j}}2\int_{A}\int_{B(z_0,r)^c}\frac{\theta_\delta(z_0-x_1)\theta_\delta(z_0-x_2)}{(x_1-x_2)_\eps}\ps{V_{\gamma e_i,\eps}(x_1)V_{\gamma e_j,\eps}(x_2)V_{\alpha_0,\eps}(z_0)\bm{\mathrm V}_\eps}_\delta d^2x_1d^2x_2
\]
converges. The only singular terms that may occur in the integrand correspond to the case where $\norm{x_1-x_2}$ tends to zero. This in turn implies that $\norm{x_1-z_0},\norm{x_2-z_0}$ tend to $r$, and therefore that both $x_1$ and $x_2$ are close to $\partial B(z_0,r)$. From item (4) of Lemma~\ref{integrability} we know that the exponent on the two-points fusion asymptotics for the Vertex Operators $V_{\gamma e_i,\eps}(x_1)V_{\gamma e_j,\eps}(x_2)$ is given in the worst case scenario (that is when $i=j$) by $-2\gamma^2$ in the regime $0<\gamma<\sqrt{\frac23}$ and by $-2\gamma^2+\frac{(3\gamma-\frac2\gamma)^2}{2}$ when $\sqrt{\frac23}<\gamma<\sqrt 2$. In any case this exponent is strictly greater than $-2$, hence the latter integral is absolutely convergent (see also Lemma~\ref{fusion}). 

The proof for the $C^2$ regularity proceeds in the same way, but requires slightly more involved arguments. We postpone it to Subsection~\ref{proof_C2}.

Finally, the proof for the regularity associated to the other points $z_k$ is handled in a similar fashion. The only difference being that the different integration by parts do not appear naturally since there are no longer terms containing derivatives of $\theta_\delta$.
\end{proof}

%%%%%%%%%%%%%%%%%%%%%%%%%%%%%%%%%%%%%%%%%%%%%
%%%%%%%%%%%%%%%%%%%%%%%%%%%%%%%%%%%%%%%%%%%%%
%%%%%%%%%%%%%%%%%%%%%%%%%%%%%%%%%%%%%%%%%%%%%
\section{Ward identities for the $W_3$-algebra}\label{section_proof}
With all these tools at hand, we are now in position to prove the main result of the present document, that is that the spin-two and spin-three Ward identities~\eqref{Ward_stress},~\eqref{Ward_higher} do hold:
\begin{align*}
    &\ps{\bm {\mathrm T}(z_0)\bm{\mathrm V}}=\sum_{k=1}^N\left(\frac{\Delta_ {\alpha_k}}{(z_0-z_k)^2}
    +\frac{\partial_{z_k}}{z_0-z_k}\right)\ps{\bm{\mathrm V}}\\
    &\ps{\bm {\mathrm W}(z_0)\bm{\mathrm V}}=-\frac18\sum_{k=1}^N\left(\frac{w(\alpha_k)}{(z_0-z_k)^3}
    +\frac{\bm {\mathcal W_{-1}^{(k)}}}{(z_0-z_k)^2}+\frac{\bm {\mathcal W_{-2}^{(k)}}}{z_0-z_k}\right)\ps{\bm{\mathrm V}}\cdot
\end{align*}
The derivatives in the first line are holomorphic derivatives (and will be in the sequel).

\subsection{Computation of the Ward identities: strategy of the proof}
In order to prove these identities, we work with the regularized version of the correlation functions. We can apply Proposition~\ref{GaussianIPP} to rewrite explicitly the left-hand side $\ps{\bm {\mathrm W}(z_0)\bm{\mathrm V}}$ in terms of multiple integrals in the $x$ variable containing singularities of the form $x\mapsto\frac{1}{(x-z_0)^p}$. We proceed in the same way with the right-hand side in the Ward identity, which will also yield multiple integrals containing singularities of the form $x\mapsto\frac{1}{(x-z_k)^p}$. Our strategy to treat the terms we get is as follows:
\begin{itemize}
    \item We transform the right-hand side so that the only singularities appearing within integrals are of the form $\frac1{(x-z_0)^p}$. Using identities such as ${\frac{1}{(z_0-z_k)(x-z_k)}=\frac{1}{z_0-x}\left(\frac{1}{x-z_k}-\frac{1}{z_0-z_k}\right)}$ (that we call \lq\lq symmetrization" identities) we rewrite terms containing expressions of the form $\frac1{x-z_k}$ as derivatives of the correlation functions. We then use integration by parts to transform them into the desired form.
    \item We take the $\eps\rightarrow0$ limit of the expression obtained. Since singularities are only present around $z_0$ this limit makes sense, so we only have to take care of the residual terms coming from the symmetrization. This is done by using Lemma~\ref{approx_inverse} which shows that these become negligible in the $\eps\rightarrow0$ limit.
    \item Finally we let $\delta$ go to zero: this step requires a careful study of the remaining terms and is based on a reasoning similar to the one developed in the proof of Proposition~\ref{regularity} on the differentiability of the correlation functions. The fact that the correlation functions are $C^1$ will be used during this last step.
\end{itemize} 
In the limit we recover the desired result; the rest of this section provides details on this procedure.

\subsection{Computation of the spin-two Ward identity}
As a warm-up, let us discuss in detail the proof of the Ward identity for the stress-energy tensor $\bm {\mathrm T}$. In the regularized expression for $\bm {\mathrm T}$ given by Equation~\eqref{eq:approx_T} one has
\begin{equation*}
    \ps{\bm {\mathrm T}_\eps(z_0)\bm{\mathrm V}_\eps}_\delta=\ps{\ps{Q,\partial^2\varphi_\eps(z_0)}\V_\eps}_\delta+\sum_{p=1}^3\ps{:\ps{h_p,\partial\varphi_\eps(z_0)}\ps{h_{p+1},\partial\varphi_\eps(z_0)}:\V_\eps}_\delta
\end{equation*}
by using Equation~\eqref{eq:remarkable_identity} (with $h_4\coloneqq h_1$). We can apply Proposition~\ref{GaussianIPP} on the first term and Equation~\eqref{eq:IPP_product} for the second one to rewrite the right-hand side under the form
\begin{equation*}
\begin{split}
    &\quad\frac{1}{2}\sum_{k=1}^N\frac{\ps{Q,\alpha_k}}{(z_0-z_k)^2_\eps}\ps{\bm{\mathrm V}_\eps}_\delta -\frac {\mu \gamma q}2\sum_{i=1}^2\int_\C\frac{\ps{2h_1+h_2,e_i}}{(z_0-x)^2_\eps}\ps{V_{\gamma e_i,\eps}(x)\bm{\mathrm V}_\eps}_\delta d^2x\\
    &+\frac{1}{2}\sum_{p=1}^3\sum_{k,l=1}^N\frac{\ps{h_p,\alpha_k}\ps{h_{p+1},\alpha_l}}{(z_0-z_k)_\eps(z_0-z_l)_\eps}\ps{\bm{\mathrm V}_\eps}_\delta\\
    &-\frac {\mu \gamma }2\sum_{p=1}^3\sum_{i=1}^2\int_\C\left(\sum_{k=1}^N\frac{\ps{h_p,\alpha_k}\ps{h_{p+1},e_i}}{(z_0-z_k)_\eps(z_0-x)_\eps}+\gamma\frac{\ps{h_p,e_i}\ps{h_{p+1},e_i}}{(z_0-x)_\eps(z_0-x)_\eps}\right)\theta_\delta(z_0-x)\ps{V_{\gamma e_i,\eps}(x)\bm{\mathrm V}_\eps}_\delta d^2x\\
    &+\frac {(\mu \gamma)^2}2\sum_{p=1}^3\sum_{i,j=1}^2\int_{\C^2}\frac{\ps{h_p,e_i}\ps{h_{p+1},e_j}}{(z_0-x_1)_\eps(z_0-x_2)_\eps}\theta_\delta(z_0-x_1)\theta_\delta(z_0-x_2)\ps{V_{\gamma e_i,\eps}(x_1)V_{\gamma e_j,\eps}(x_2)\bm{\mathrm V}_\eps}_\delta d^2x_1d^2x_2.
\end{split}
\end{equation*}

We now turn to the other side of the expression. Along the same lines as in the proof of Proposition~\ref{regularity} we end up with
\begin{equation*}
\begin{split}
    \sum_{k=1}^N\left(\frac{\Delta_{\alpha_k}}{(z_0-z_k)^2}+\frac{\partial_{z_k}}{(z_0-z_k)}\right)\ps{\bm{\mathrm V}_\eps}_\delta&=\left(\sum_{k=1}^N\frac{\Delta_{\alpha_k}}{(z_0-z_k)^2}-\frac{1}{2}\sum_{k\neq l}\frac{\ps{\alpha_k,\alpha_l}}{(z_0-z_k)(z_k-z_l)_\eps}\right)\ps{\bm{\mathrm V}_\eps}_\delta\\
    &+\frac{\mu\gamma}{2}\sum_{i=1}^2\int_\C\sum_{k=1}^N\frac{\ps{\alpha_k,e_i}\theta_\delta(z_0-x)}{(z_0-z_k)(z_k-x)_\eps}\ps{V_{\gamma e_i,\eps}(x)\bm{\mathrm V}_\eps}_\delta d^2x.
\end{split}
\end{equation*}
Here the symmetrization step is rather straightforward. For the terms that do not involve integrals:
\begin{equation*}
\begin{split}
\frac12\sum_{k\neq l}\frac{\ps{\alpha_k,\alpha_l}}{(z_0-z_k)(z_k-z_l)_\eps}&=\frac14\sum_{k\neq l}\frac{\ps{\alpha_k,\alpha_l}}{(z_k-z_l)_\eps}\left(\frac{1}{z_0-z_k}-\frac{1}{z_0-z_l}\right)\\
&=\frac{1}{4}\sum_{k\neq l}\frac{\ps{\alpha_k,\alpha_l}}{(z_0-z_k)(z_0-z_l)}+\frac14\sum_{k\neq l}\frac{\ps{\alpha_k,\alpha_l}}{(z_0-z_k)(z_0-z_l)}\left(\frac{z_k-z_l}{(z_k-z_l)_\eps}-1\right).
\end{split}
\end{equation*}
We then transform the terms that contain one integral as
\begin{align*}
    &\frac{\mu\gamma}{2}\sum_{i=1}^2\int_\C\sum_{k=1}^N\frac{\ps{\alpha_k,e_i}\theta_\delta(z_0-x)}{(z_0-z_k)(z_k-x)_\eps}\ps{V_{\gamma e_i,\eps}(x)\bm{\mathrm V}_\eps}_\delta d^2x\\
    =&\frac{\mu\gamma}{2}\sum_{i=1}^2\int_\C\sum_{k=1}^N\left(\frac{\theta_\delta(z_0-x)\ps{\alpha_k,e_i}}{(z_0-x)(z_k-x)_\eps}+\frac{\theta_\delta(z_0-x)\ps{\alpha_k,e_i}}{(z_0-x)(z_0-z_k)}\right)\ps{V_{\gamma e_i,\eps}(x)\bm{\mathrm V}_\eps}_\delta d^2x\\
    &+\frac{\mu\gamma}{2}\sum_{i=1}^2\int_\C\sum_{k=1}^N\frac{\theta_\delta(z_0-x)\ps{\alpha_k,e_i}}{(z_0-x)(z_0-z_k)}\left(\frac{z_k-x}{(z_k-x)_\eps}-1\right)\ps{V_{\gamma e_i,\eps}(x)\bm{\mathrm V}_\eps}_\delta d^2x.
\end{align*}
Combining our expressions yields
\begin{align*}
    &\ps{\bm {\mathrm T}_\eps(z_0)\bm{\mathrm V}_\eps}_\delta-\sum_{k=1}^N\left(\frac{\Delta_{\alpha_k}}{(z_0-z_k)^2}+\frac{\partial_{z_k}}{(z_0-z_k)}\right)\ps{\bm{\mathrm V}_\eps}_\delta=\\
    &\frac12\sum_{k=1}^N\frac{1}{(z_0-z_k)^2}\left[\ps{Q,\alpha_k}\left(\frac{(z_0-z_k)^2}{(z_0-z_k)^2_\eps}-1\right)-\frac12\ps{\alpha_k,\alpha_k}\left(\frac{(z_0-z_k)^2}{(z_0-z_k)_\eps(z_0-z_k)_\eps}-1\right)\right]\ps{\bm{\mathrm V}_\eps}_\delta \\
    &-\frac14\sum_{k\neq l}\frac{\ps{\alpha_k,\alpha_l}}{(z_0-z_k)(z_0-z_l)}\left(\frac{(z_0-z_k)(z_0-z_l)}{(z_0-z_k)_\eps(z_0-z_l)_\eps}-1+\frac{z_k-z_l}{(z_k-z_l)_\eps}-1\right)\ps{\bm{\mathrm V}_\eps}_\delta\\
    &-\frac {\mu \gamma }2\sum_{i=1}^2\int_\C\frac{\theta_\delta(z_0-x)}{(z_0-x)^2}\left(\gamma-\frac{\gamma(z_0-x)^2}{(z_0-x)_\eps(z_0-x)_\eps}\right)\ps{V_{\gamma e_i,\eps}(x)\bm{\mathrm V}_\eps}_\delta d^2x\\
    &-\frac {\mu \gamma }2\sum_{i=1}^2\int_\C\frac{\theta_\delta(z_0-x)}{(z_0-x)^2}\left(\sum_{k=1}^N\frac{(z_0-x)\ps{\alpha_k,e_i}}{(z_0-z_k)}\left(\frac{z_k-x}{(z_k-x)_\eps}-1\right)\right)\ps{V_{\gamma e_i,\eps}(x)\bm{\mathrm V}_\eps}_\delta d^2x\\
    &-\mu\sum_{i=1}^2\int_\C\frac{\theta_\delta(z_0-x)}{z_0-x}\sum_{k=1}^N\frac{\ps{\alpha_k,\gamma e_i}}{2(z_k-x)_\eps}-\frac{\theta_\delta(z_0-x)}{(z_0-x)^2_\eps}\ps{V_{\gamma e_i,\eps}(x)\bm{\mathrm V}_\eps}_\delta d^2x\\
    &-\frac {(\mu \gamma)^2}4\sum_{i,j=1}^2\ps{e_i,e_j}\int_{\C^2}\frac{\theta_\delta(z_0-x_1)\theta_\delta(z_0-x_2)}{(z_0-x_1)_\eps(z_0-x_2)_\eps}\ps{V_{\gamma e_i,\eps}(x_1)V_{\gamma e_j,\eps}(x_2)\bm{\mathrm V}_\eps}_\delta d^2x_1d^2x_2.
\end{align*}
where we have used the fact that $\sum_{p=1}^3\ps{h_p,\alpha_k}\ps{h_{p+1},\alpha_l}=-\frac12\ps{\alpha_k,\alpha_l}$.

From Lemma~\ref{approx_inverse}, all lines but the last two vanish when $\eps\to 0$. Now for $x\not\in\lbrace z_0,\cdots,z_N\rbrace$,
\begin{align*}
    &\partial_x\ps{V_{\gamma e_i,\eps}(x)\bm{\mathrm V}_\eps}_\delta=-\frac12\sum_{k=1}^N\frac{\ps{\alpha_k,\gamma e_i}}{(x-z_k)_\eps}\ps{V_{\gamma e_i,\eps}(x)\bm{\mathrm V}_\eps}_\delta+\frac{\mu\gamma^2}{2}\sum_{i=1}^2\int_\C \frac{\ps{e_i,e_j}}{(x-x_2)_\eps}\ps{V_{\gamma e_i,\eps}(x)V_{\gamma e_j,\eps}(x_2)\bm{\mathrm V}_\eps}_\delta d^2x_2.
\end{align*}
Using integration by parts for the term $\frac{\theta_\delta(z_0-x)}{(z_0-x)^2_\eps}\ps{V_{\gamma e_i,\eps}(x)\bm{\mathrm V}_\eps}_\delta$, 
\begin{equation*}
\begin{split}
    &-\mu\sum_{i=1}^2\int_\C\left(\frac{\theta_\delta(z_0-x)}{z_0-x}\sum_{k=1}^N\frac{\ps{\alpha_k,\gamma e_i}}{2(z_k-x)_\eps}-\frac{\theta_\delta(z_0-x)}{(z_0-x)^2_\eps}\right)\ps{V_{\gamma e_i,\eps}(x)\bm{\mathrm V}_\eps}_\delta d^2x\\
    ={}&-\mu\sum_{i=1}^2\int_\C\frac{\partial_x\theta_\delta(z_0-x)}{(z_0-x)_\eps}\ps{V_{\gamma e_i,\eps}(x)\bm{\mathrm V}_\eps}_\delta d^2x\\
    &+\frac {(\mu \gamma)^2}2\sum_{i,j=1}^2\ps{e_i,e_j}\int_{\C^2}\frac{\theta_\delta(z_0-x_1)\theta_\delta(z_0-x_2)}{(z_0-x_1)_\eps(x_1-x_2)_\eps}\ps{V_{\gamma e_i,\eps}(x_1)V_{\gamma e_j,\eps}(x_2)\bm{\mathrm V}_\eps}_\delta d^2x_1d^2x_2.
\end{split}
\end{equation*}
By symmetrisation and using that $\frac{1}{(z_0-x_1)(z_0-x_2)}=\frac{1}{(x_1-x_2)}\left(\frac{1}{z_0-x_1}-\frac1{z_0-x_2}\right)$, we see that the last integral is of order $o(\eps)$. Therefore,
\begin{equation*}
\begin{split}
    \ps{\bm {\mathrm T}_\eps(z_0)\bm{\mathrm V}_\eps}_\delta-\sum_{k=1}^N\left(\frac{\Delta_{\alpha_k}}{(z_0-z_k)^2}+\frac{\partial_{z_k}}{(z_0-z_k)}\right)\ps{\bm{\mathrm V}_\eps}_\delta
    ={}&\mathfrak{R}_\eps -\mu\sum_{i=1}^2\int_\C\frac{\partial_x\theta_\delta(z_0-x)}{(z_0-x)_\eps}\ps{V_{\gamma e_i,\eps}(x)\bm{\mathrm V}_\eps}_\delta d^2x
\end{split}
\end{equation*}
where $\mathfrak{R}_\eps$ vanishes when $\eps\rightarrow0$.  When $\eps$ goes to $0$, the only term that is left is given by
\begin{equation*}
-\mu\sum_{i=1}^2\int_\C\frac{\partial_x\theta_\delta(z_0-x)}{(z_0-x)}\ps{V_{\gamma e_i}(x)\bm{\mathrm V}}_\delta d^2x.
\end{equation*}
In order to prove the spin-two Ward identity~\eqref{Ward_stress} it only remains to make sure that the $\delta\to 0$ limit of the above expression is zero. We can make a change of variable $x\to z_0-\delta x$ to get
\begin{equation*}
-\mu\sum_{i=1}^2\int_{A(1/2,1)}\frac{\partial_x\theta_1(x)}{x}\ps{V_{\gamma e_i}(\delta x+z_0)\bm{\mathrm V}}_\delta d^2x
\end{equation*}
with $A(1/2,1)$ the annulus $A(1/2,1)$ of radii $1/2,1$ centered at the origin.
When $\delta\rightarrow0$, by continuity of the correlation functions and dominated convergence this converges to 
\begin{equation*}
-\mu\sum_{i=1}^2\ps{V_{\gamma e_i,\eps}(z_0)\bm{\mathrm V}_\eps}_{\delta=0}\int_{A(1/2,1)}\frac{\partial_x\theta_1(x)}{x}d^2x.
\end{equation*}
Since $\theta_1$ is rotation invariant, the latter integral is zero (one can use polar coordinates). This shows that
\begin{equation*}
\lim\limits_{\delta\rightarrow0}\lim\limits_{\eps\rightarrow0}\left(\ps{\bm {\mathrm T}_\eps(z_0)\bm{\mathrm V}_\eps}_\delta-\sum_{k=1}^N\left(\frac{\Delta_{\alpha_k}}{(z_0-z_k)^2}+\frac{\partial_{z_k}}{(z_0-z_k)}\right)\ps{\bm{\mathrm V}_\eps}_\delta\right)=0.
\end{equation*}
Since the correlation functions are differentiable in $z_0$ and all $z_k$, according to Proposition~\ref{regularity}, we already know that the $\eps,\delta\rightarrow0$ limit of $\sum\limits_{k=1}^N\left(\frac{\Delta_{\alpha_k}}{(z_0-z_k)^2}+\frac{\partial_{z_k}}{(z_0-z_k)}\right)\ps{\bm{\mathrm V}_\eps}_\delta$ exists. This entails the existence of $\lim\limits_{\delta\to 0}\lim\limits_{\eps\to 0}\ps{\bm {\mathrm T}_\eps(z_0)\bm{\mathrm V}_\eps}_\delta$, concluding the proof of the spin-two Ward identity.

\subsection{Computation of the spin-three Ward identity}
We proceed in a similar way for the spin-three Ward identity. We start by considering the regularized version of the probabilistic objects
\begin{multline*}
    \ps{\bm {\mathrm W}_\eps(z_0)\bm{\mathrm V}_\eps}_\delta=-\frac{q^2}{8}\ps{\ps{h_2,\partial^3\varphi_\eps(z_0)}\V_\eps}_\delta+\frac{q}{4}\ps{:\ps{h_2-h_1,\partial^2\varphi_\eps(z_0)}\ps{h_1,\partial\varphi_\eps(z_0)}:\V_\eps}_\delta\\
    +\frac q4\ps{:\ps{h_3-h_2,\partial^2\varphi_\eps(z_0)}\ps{h_3,\partial\varphi_\eps(z_0)}:\V_\eps}_\delta+\ps{:\ps{h_1,\partial\varphi_\eps(z_0}\ps{h_2,\partial\varphi_\eps(z_0)}\ps{h_3,\partial\varphi_\eps(z_0)}:\V_\eps}_{\delta}.
\end{multline*}
Applying Proposition~\ref{GaussianIPP} and more generally Equation~\eqref{eq:IPP_product} we see that this quantity is given by
\[
    \ps{\bm {\mathrm W}_\eps(z_0)\bm{\mathrm V}_\eps}_\delta=-\frac{1}{8}\left( q^2\mathrm{I}_{1}+\frac q2\mathrm{I}_{2}+\mathrm{I}_3\right) ,
\]
where the $(\mathrm I_i)_{i=1,2,3}$ correspond to the (rather lengthy) expressions:
\begin{align*}
    &\mathrm I_1=-\sum_{k=1}^N \frac{\ps{h_2,\alpha_k}}{(z_0-z_k)_\eps^3}\ps{\V_\eps}_\delta
    +\mu\gamma\sum_{i=1}^{2}\ps{h_2,e_i}\int_\C \frac{\theta_\delta(z_0-x)}{(z_0-x)^3_\eps}\ps{V_{\gamma e_i,\eps}(x)\bm{\mathrm V}_\eps}_\delta d^2x,\\
    &\mathrm{I}_2=\sum_{k,l=1}^N\frac{B(\alpha_k,\alpha_l)}{(z_0-z_k)_\eps^2(z_0-z_l)_\eps}\ps{\bm{\mathrm V}_\eps}_\delta\\
    &\hspace*{-1cm}-\mu\gamma \sum_{i=1}^2\int_\C \left(\sum_{k=1}^N\frac{B(\alpha_k,e_i)}{(z_0-z_k)_\eps^2(z_0-x)_\eps}+\frac{B(e_i,\alpha_k)}{(z_0-x)^2_\eps(z_0-z_k)_\eps}+ \frac{\gamma B(e_i,e_i)}{(z_0-x)^2_\eps(z_0-x)_\eps}\right)\theta_\delta(z_0-x)\ps{V_{\gamma e_i,\eps}(x)\bm{\mathrm V}_\eps}_\delta d^2x\\
    &+(\mu\gamma)^2\sum_{i,j=1}^2\int_{\C^2} B(e_i,e_j)\frac{\theta_\delta(z_0-x_1)}{(z_0-x_1)^2_\eps}\frac{\theta_\delta(z_0-x_2)}{(z_0-x_2)_\eps}\ps{V_{\gamma e_i,\eps}(x_1)V_{\gamma e_j,\eps}(x_2)\bm{\mathrm V}_\eps}_\delta d^2x_1d^2x_2\quad\text{and}
\end{align*}
\begin{align*}
    &\mathrm{I}_3=\sum_{k,l,p=1}^N \frac{\ps{h_1,\alpha_k}\ps{h_2,\alpha_l}\ps{h_3,\alpha_p}}{(z_0-z_k)_\eps(z_0-z_l)_\eps(z_0-z_p)_\eps}\ps{\bm{\mathrm V}_\eps}_\delta\\
    &\hspace*{-1.5cm}-\mu\gamma\sum_{i=1}^2\int_\C \left(\sum_{k,l=1}^N\frac{C(\alpha_k,\alpha_l,e_i)}{(z_0-z_k)_\eps(z_0-z_l)_\eps(z_0-x)_\eps}+\sum_{k=1}^N\frac{\gamma C(\alpha_k,e_i,e_i)}{(z_0-z_k)_\eps(z_0-x)_\eps(z_0-x)_\eps}\right)\theta_\delta(z_0-x)\ps{V_{\gamma e_i,\eps}(x)\bm{\mathrm V}_\eps}_\delta d^2x\\
    &+(\mu\gamma)^2\sum_{i,j=1}^2\int_{\C^2} \left(\sum_{k=1}^N\frac{C(\alpha_k,e_i,e_j)}{(z_0-z_k)_\eps(z_0-x_1)_\eps(z_0-x_2)_\eps}+\frac{\gamma C(e_i,e_i,e_j)}{(z_0-x_1)_\eps(z_0-x_1)_\eps(z_0-x_2)_\eps}\right)\\
    &\quad\quad\quad\quad\quad\quad\quad\theta_\delta(z_0-x_1)\theta_\delta(z_0-x_2)\ps{V_{\gamma e_i,\eps}(x_1)V_{\gamma e_j,\eps}(x_2)\bm{\mathrm V}_\eps}_\delta d^2x_1d^2x_2\\
    &\hspace*{-1cm}-(\mu\gamma)^3\sum_{i,j,f=1}^2\int_{\C^3} C(e_i,e_j,e_f)\frac{\theta_\delta(z_0-x_1)\theta_\delta(z_0-x_2)\theta_\delta(z_0-x_3)}{3(z_0-x_1)_\eps(z_0-x_2)_\eps(z_0-x_3)_\eps}\ps{V_{\gamma e_i,\eps}(x_1)V_{\gamma e_j,\eps}(x_2)V_{\gamma e_f,\eps}(x_3)\bm{\mathrm V}_\eps}_\delta d^2x_1d^2x_2d^2x_3.
\end{align*}
Using the properties of Lemma~\ref{integrability} for the regularized correlation functions we see that the $\eps\rightarrow0$ limit of the previous expression exists, and taking this limit is tantamount to erasing all the $\eps$. 

We proceed in the same way with the right-hand side in~\eqref{Ward_higher}. Recall that
\begin{align*}
    &\bm {\mathrm W_{-1,\eps}}(z_k,\alpha_k)\coloneqq -\frac qB(\alpha_k,\partial\varphi_\eps(z_k))
    -2C(\alpha_k,\alpha_k,\partial\varphi_\eps(z_k))\quad\text{and}\\
    &\bm {\mathrm W_{-2,\eps}}(z_k,\alpha_k)\coloneqq q\left(B(\partial^2\varphi_\eps( z_k),\alpha_k)-B(\alpha_k,\partial^2\varphi_\eps(z_k))\right)-2C(\alpha_k,\alpha_k,\partial^2\varphi_\eps(z_k)) +4:C(\alpha_k,\partial\varphi_\eps(z_k),\partial\varphi_\eps(z_k)):.
\end{align*}
As a consequence we see that this right-hand side is given by
\[
	-\frac18\sum_{k=1}^N\left(\frac{w^{(3)}(\alpha_k)}{(z_0-z_k)^3}
    +\frac{\mathrm J_{-1}^{(k)}}{(z_0-z_k)^2}+\frac{\mathrm J_{-2}^{(k)}}{z_0-z_k}\right)
\]
where the notations $\mathrm J_{-i}^{(k)}$ stand for
\begin{align*}
	\mathrm J_{-1}^{(k)}&=\frac{1}{2}\sum_{l\neq k}\frac{q B(\alpha_k,\alpha_l)+2C(\alpha_k,\alpha_k,\alpha_l)}{(z_k-z_l)_\eps}\ps{\V_\eps}_\delta\\
    &\quad -\frac{\mu}{2}\sum_{i=1}^2\int_\C\frac{q B(\alpha_k,\gamma e_i)+2C(\alpha_k,\alpha_k,\gamma e_i)}{(z_k-x)_\eps}\theta_\delta(z_0-x)\ps{V_{\gamma e_i,\eps}(z)\V_\eps}_\delta d^2x\quad\text{and}\\
    	\mathrm J_{-2}^{(k)}&=\frac{1}{2}\sum_{l\neq k}\frac{q\left(B(\alpha_l,\alpha_k)-B(\alpha_k,\alpha_l)\right)+2C(\alpha_k,\alpha_l-\alpha_k,\alpha_l)}{(z_k-z_l)_\eps^2}\ps{\V_\eps}_\delta\\
    &\quad +\sum_{k,l,p\text{ distinct}}\frac{C(\alpha_k,\alpha_l,\alpha_p)}{(z_k-z_l)_\eps(z_k-z_p)_\eps}\ps{\V_\eps}_\delta\\
	&\quad -\frac{\mu\gamma}{2}\sum_{i=1}^2\int_\C\left(\frac{q\left(B(e_i,\alpha_k)-B(\alpha_k, e_i)\right)+2C(\alpha_k,\gamma e_i-\alpha_k,e_i)}{(z_k-x)_\eps^2}\right)\theta_\delta(z_0-x)\ps{V_{\gamma e_i,\eps}(z)\V_\eps}_\delta d^2x\\
    &\quad -\frac{\mu\gamma}{2}\sum_{i=1}^2\int_\C \left(\sum_{l\neq k}\frac{2C^{\sigma}(\alpha_k,\alpha_l,e_i)}{(z_k-z_l)_\eps(z_k-x)_\eps}\right)\theta_\delta(z_0-x)\ps{V_{\gamma e_i,\eps}(z)\V_\eps}_\delta d^2x\\
    &\quad +(\mu\gamma)^2\sum_{i,j=1}^2\int_{\C^2}\frac{C(\alpha_k,e_i,e_j)}{(z_k-x_1)_\eps(z_k-x_2)_\eps}\theta_\delta(z_0-x_1)\theta_\delta(z_0-x_2)\ps{V_{\gamma e_i,\eps}(x_1)V_{\gamma e_j,\eps}(x_2)\V_\eps}_\delta d^2x_1d^2x_2
\end{align*}
with $C^{\sigma}(\alpha_k,\alpha_l,e_i)\coloneqq C(\alpha_k,\alpha_l,e_i)+C(\alpha_k,e_i,\alpha_l)$.
\subsubsection{First step: symmetrizing}
We now turn to the first step of the proof which consists in using symmetrization identities so that in the expression of the terms $\mathrm{J}_{-1,2}^{(k)}$ the only singularities that will occur will do so around the distinguished point $z_0$. To illustrate this, let us consider the first term $\mathrm{J}_{-1}^{(k)}$. Using the \lq\lq symmetrization identity" 
\[
\frac{1}{(z-y)^2(y-x)}=\frac{1}{(z-y)^2(z-x)}+\frac{1}{(z-y)(z-x)^2}+\frac{1}{(z-x)^2(y-x)}
\]
we can write that $\frac{\mathrm{J}_{-1}^{(k)}}{(z_0-z_k)^2}$ is given by
\begin{align*}
	    &\quad\frac1{2(z_0-z_k)^2}\sum_{l\neq k}\frac{q B(\alpha_k,\alpha_l)+2C(\alpha_k,\alpha_k,\alpha_l)}{(z_0-z_l)}\ps{\V_\eps}_\delta\\
        &+\frac1{2(z_0-z_k)}\sum_{l\neq k}\frac{q B(\alpha_k,\alpha_l)+2C(\alpha_k,\alpha_k,\alpha_l)}{(z_k-z_l)(z_k-z_l)_{\eps}}\ps{\V_\eps}_\delta \\
	    &- \frac12\sum_{l\neq k}\frac{q B(\alpha_k,\alpha_l)+2C(\alpha_k,\alpha_k,\alpha_l)}{(z_0-z_l)(z_k-z_l)(z_k-z_l)_{\eps}}\ps{\V_\eps}_\delta \\
	    &-\frac{\mu}{2(z_0-z_k)^2}\sum_{i=1}^2\int_\C\frac{q B(\alpha_k,\gamma e_i)+2C(\alpha_k,\alpha_k,\gamma e_i)}{(z_0-x)}\theta_{\delta}(z_0-x)\ps{V_{\gamma e_i,\eps}(z)\V_\eps}_\delta d^2x\\
	    &-\frac{\mu}{2(z_0-z_k)}\sum_{i=1}^2\int_\C\frac{q B(\alpha_k,\gamma e_i)+2C(\alpha_k,\alpha_k,\gamma e_i)}{(z_0-x)^2}\theta_{\delta}(z_0-x)\ps{V_{\gamma e_i,\eps}(z)\V_\eps}_\delta d^2x\\
	    &-\frac{\mu}{2}\sum_{i=1}^2\int_\C\frac{q B(\alpha_k,\gamma e_i)+2C(\alpha_k,\alpha_k,\gamma e_i)}{(z_0-x)^2(z_k-x)_\epsilon}\theta_{\delta}(z_0-x)\ps{V_{\gamma e_i,\eps}(z)\V_\eps}_\delta d^2x+\mathfrak{R}_{-1}(\eps,\delta)
\end{align*}
with
\begin{align*}
&\mathfrak{R}_{-1}(\eps,\delta)\coloneqq \sum_{l\neq k}\frac{q B(\alpha_k,\alpha_l)+2C(\alpha_k,\alpha_k,\alpha_l)}{2(z_0-z_k)^2}\left(\frac{1}{(z_k-z_l)_\eps}-\frac{1}{z_k-z_l}\right)\ps{\V_\eps}_\delta\\
&\hspace*{-0.5cm}-\frac{\mu}{2}\sum_{i=1}^2\int_\C\frac{q B(\alpha_k,\gamma e_i)+2C(\alpha_k,\alpha_k,\gamma e_i)}{(z_0-z_k)(z_0-x)}\left(\frac{1}{z_0-x}+\frac{1}{z_0-z_k}\right)\left(\frac{
z_k-x}{(z_k-x)_\eps}-1\right)\theta_{\delta}(z_0-x)\ps{V_{\gamma e_i,\eps}(z)\V_\eps}_\delta d^2x.
\end{align*}
According to Lemma~\ref{approx_inverse}, we readily see that 
\begin{align*}
    \lim\limits_{\eps\rightarrow0}\mathfrak{R}_{-1}(\eps,\delta)=0.
\end{align*}
In the new expression provided for $\mathrm{J}_{-1}^{(k)}$ the last integral is the only one that features a singular term which is not localized at $z_0$ and corresponds to $\frac{1}{(z_k-x)_\eps}$. To remove this singularity we will see that when combined with the term $\frac{\mathrm{J}_{-2}^{(k)}}{(z_0-z_k)}$ this quantity is nothing but a derivative of the correlation function. Therefore this term will be treated in the second part of the proof, that is will be removed by using integration by parts.

Let us now turn to $\mathrm{J}_{-2}^{(k)}$: its study is more involved but nonetheless the very same method still works. In order to keep things simple we explain how to treat the term involving a two-fold integral. The symmetrization method yields
\begin{align*}
    &\frac{1}{z_0-z_k}\mu^2\sum_{i,j=1}^2\int_{\C^2}\frac{C(\alpha_k,e_i,e_j)}{(z_k-x_1)_\eps(z_k-x_2)_\eps}\ps{V_{\gamma e_i,\eps}(x_1)V_{\gamma e_j,\eps}(x_2)\V_\eps}_\delta d^2x_1d^2x_2=\\
    &\mu^2\sum_{i,j=1}^2\int_{\C^2}\frac{C(\alpha_k,e_i,e_j)}{(z_0-z_k)(z_0-x_1)_\eps(z_0-x_2)_\eps}\ps{V_{\gamma e_i,\eps}(x_1)V_{\gamma e_j,\eps}(x_2)\V_\eps}_\delta d^2x_1d^2x_2\\
    &\hspace*{-0.5cm}+\mu^2\sum_{i,j=1}^2\int_{\C^2}\frac{C(\alpha_k,e_i,e_j)+C(\alpha_k,e_j,e_i)}{(z_0-x_1)(x_1-x_2)_\eps(z_k-x_1)_\eps}\theta_\delta(z_0-x_1)\theta_\delta(z_0-x_2)\ps{V_{\gamma e_i,\eps}(x_1)V_{\gamma e_j,\eps}(x_2)\V_\eps}_\delta d^2x_1d^2x_2\\
    &+\mathfrak{R}_{-2}(\eps,\delta),
\end{align*}
where $\mathfrak{R}_{-2}(\eps,\delta)$ is defined by
\begin{align*}
	&\hspace*{-1.25cm}\mu^2\sum_{i,j=1}^2\int_{\C^2}\frac{C(\alpha_k,e_i,e_j)}{(z_0-z_k)(z_0-x_1)(z_0-x_2)}\left(\frac{x_1-x_2}{(x_1-x_2)_\eps}-1\right)\theta_\delta(z_0-x_1)\theta_\delta(z_0-x_2)\ps{V_{\gamma e_i,\eps}(x_1)V_{\gamma e_j,\eps}(x_2)\V_\eps}_\delta d^2x_1d^2x_2\\
	&+\mu^2\sum_{i,j=1}^2\int_{\C^2}\frac{C(\alpha_k,e_i,e_j)}{(z_0-z_k)}\left(\frac{1}{(z_k-x_1)_\eps(z_k-x_2)_\eps}\right.\\
	&\hspace*{1cm}\left.-\frac1{(x_1-x_2)_\eps}\left(\frac{1}{(z_k-x_1)_\eps}-\frac{1}{(z_k-x_2)_\eps}\right)\right)\theta_\delta(z_0-x_1)\theta_\delta(z_0-x_2)\ps{V_{\gamma e_i,\eps}(x_1)V_{\gamma e_j,\eps}(x_2)\V_\eps}_\delta d^2x_1d^2x_2\\
	&+\mu^2\sum_{i,j=1}^2\int_{\C^2}\frac{C(\alpha_k,e_i,e_j)+C(\alpha_k,e_j,e_i)}{(z_0-z_k)(z_0-x_1)(x_1-x_2)_\eps}\theta_\delta(z_0-x_1)\theta_\delta(z_0-x_2)\ps{V_{\gamma e_i,\eps}(x_1)V_{\gamma e_j,\eps}(x_2)\V_\eps}_\delta d^2x_1d^2x_2.
\end{align*}
Similarly it will be convenient in the sequel to note that, using the symmetries between the integration variables and since the quantities $C(e_i,e_j,e_f)$ are explicit for $i,j,f\in\{1,2\}$, elementary but lengthy algebraic computations allow to write
\begin{align*}
&\hspace*{-1cm}(\mu\gamma)^3\sum_{i,j,f=1}^2\int_{\C^3} C(e_i,e_j,e_f) \frac{\theta_\delta(z_0-x_1)\theta_\delta(z_0-x_2)\theta_\delta(z_0-x_3)}{3(z_0-x_1)(z_0-x_2)(z_0-x_3)}\ps{V_{\gamma e_i,\eps}(x_1)V_{\gamma e_j,\eps}(x_2)V_{\gamma e_f,\eps}(x_3)\bm{\mathrm V}_\eps}_\delta d^2x_1d^2x_2d^2x_3\\
&\hspace*{-1cm}=-(\mu\gamma)^3\sum_{i\neq j}\ps{h_2,e_i}\int_{\C^3} \frac{\theta_\delta(z_0-x_1)\theta_\delta(z_0-x_2)\theta_\delta(z_0-x_3)}{(z_0-x_1)(x_1-x_2)_\eps(x_1-x_3)_\eps}\ps{V_{\gamma e_i,\eps}(x_1)V_{\gamma e_i,\eps}(x_2)V_{\gamma e_j,\eps}(x_3)\bm{\mathrm V}_\eps}_\delta d^2x_1d^2x_2d^2x_3\\
    &+\mathfrak{R}_{-3}(\eps,\delta)
\end{align*}
where $\mathfrak{R}_{-3}(\eps,\delta)$ is similar to $\mathfrak{R}_{-2}(\eps,\delta)$.
Collecting all the terms and using the identity based on the explicit expression for $B$ and $C$ 
\[
	q\left(B(e_i,\alpha_k)-B(\alpha_k, e_i)\right)+2C(\alpha_k,\gamma e_i-\alpha_k,e_i)=\frac4\gamma\ps{h_2,e_i}\omega_{3-i}(\alpha_k)\left(1+\frac12\ps{\alpha_k,\gamma e_i}\right)
\] (where $\omega_{3-i}(\alpha_k)$ is a shorthand for $\ps{\omega_{3-i},\alpha_k}$) we can write that
\begin{align*}
    &q^2\mathrm{I}_{1}+\frac q2\mathrm{I}_{2}+\mathrm{I}_3-\sum_{k=1}^N\left(\frac{w^{(3)}(\alpha_k)}{(z_0-z_k)^3}
    +\frac{\mathrm J_{-1}^{(k)}}{(z_0-z_k)^2}+\frac{\mathrm J_{-2}^{(k)}}{z_0-z_k}\right)=\\
    &+2\mu q\sum_{i=1}^{2}\ps{h_2,e_i}\int_\C \frac{\theta_\delta(z_0-x)}{(z_0-x)^3}\ps{V_{\gamma e_i,\eps}(x)\bm{\mathrm V}_\eps}_\delta d^2x\\
    &+\frac{\mu\gamma}{2}\sum_{i=1}^2\int_\C\sum_{k=1}^N\frac{q B(e_i,\alpha_k)+2C(\alpha_k,\gamma e_i,e_i)}{(z_0-x)^2(z_k-x)_\eps}\theta_{\delta}(z_0-x)\ps{V_{\gamma e_i,\eps}(z)\V_\eps}_\delta d^2x\\
    &+2\mu\sum_{i=1}^2\ps{h_2,e_i}\int_\C\sum_{k=1}^N\frac{\theta_{\delta}(z_0-x)\omega_{3-i}(\alpha_k)}{(z_0-x)(z_k-x)_\eps}\left(\frac{1+\frac12\ps{\alpha_k,\gamma e_i}}{(z_k-x)_\eps}+\sum_{l\neq k}\frac{\frac12\ps{\alpha_l, \gamma e_i}}{(z_l-x)_\eps}\right)\ps{V_{\gamma e_i,\eps}(z)\V_\eps}_\delta d^2x\\
    &+\frac{(\mu\gamma)^2}2\sum_{i,j=1}^2\int_{\C^2} \left(qB(e_i,e_j)+2\gamma C(e_i,e_i,e_j)\right)\frac{\theta_\delta(z_0-x_1)}{(z_0-x_1)^2}\frac{\theta_\delta(z_0-x_2)}{(x_1-x_2)_\eps}\ps{V_{\gamma e_i,\eps}(x_1)V_{\gamma e_j,\eps}(x_2)\bm{\mathrm V}_\eps}_\delta d^2x_1d^2x_2\\
    &-2\mu^2\gamma\sum_{i\neq j}^2\ps{h_2,e_i}\int_{\C^2} \frac{\theta_\delta(z_0-x_1)}{(z_0-x_1)}\frac{\theta_\delta(z_0-x_2)}{(x_1-x_2)_\eps^2}\ps{V_{\gamma e_i,\eps}(x_1)V_{\gamma e_j,\eps}(x_2)\bm{\mathrm V}_\eps}_\delta d^2x_1d^2x_2\\
    &-(\mu\gamma)^2\sum_{i,j=1}^2\int_{\C^2}\sum_{k=1}^N \frac{\theta_\delta(z_0-x_1)\theta_\delta(z_0-x_2)}{(z_0-x_1)(x_1-x_2)_\eps}\frac{C^{\sigma}(\alpha_k,e_i,e_j)}{(z_k-x_1)_\eps}\ps{V_{\gamma e_i,\eps}(x_1)V_{\gamma e_j,\eps}(x_2)\V_\eps}_\delta d^2x_1d^2x_2\\
    &\hspace*{-0cm}+(\mu\gamma)^3\sum_{i\neq j}\ps{h_2,e_i}\int_{\C^3} \frac{\theta_\delta(z_0-x_1)\theta_\delta(z_0-x_2)\theta_\delta(z_0-x_3)}{(z_0-x_1)(x_1-x_2)_\eps(x_2-x_3)_\eps}\ps{V_{\gamma e_i,\eps}(x_1)V_{\gamma e_i,\eps}(x_2)V_{\gamma e_j,\eps}(x_3)\bm{\mathrm V}_\eps}_\delta d^2x_1d^2x_2d^2x_3\\
    &+\mathfrak{R}(\eps,\delta)
\end{align*}
where $\mathfrak{R}(\eps,\delta)$ is a combination of $\mathfrak{R}_{-1,-2,-3}(\eps,\delta)$. At first we are concerned with the $\eps\rightarrow0$ limit of this remainder. There are two terms for which it is not obvious that they converge to zero (here $F$ is some bounded function):
\begin{align*}
    \int_{\C}\frac{F(x)}{(x-z_k)_\eps}\left(\frac{x-z_k}{(x-z_k)_\eps}-1\right)\ps{V_{\gamma e_i,\eps}(x)\V_\eps}_\delta d^2x,\quad\text{and}
\end{align*}
\begin{align*}
	&\mu^2\sum_{i,j=1}^2\int_{\C^2}\frac{C(\alpha_k,e_i,e_j)}{(z_0-z_k)}\left(\frac{1}{(z_k-x_1)_\eps(z_k-x_2)_\eps}-\frac1{(x_1-x_2)_\eps}\left(\frac{1}{(z_k-x_1)_\eps}-\frac{1}{(z_k-x_2)_\eps}\right)\right)\\
    &\quad\quad\quad\quad\quad\quad\theta_\delta(z_0-x_1)\theta_\delta(z_0-x_2)\ps{V_{\gamma e_i,\eps}(x_1)V_{\gamma e_j,\eps}(x_2)\V_\eps}_\delta d^2x_1d^2x_2.\\
\end{align*}
However making the change of variables $x\leftrightarrow4R\eps x$ (where recall $R$ from the regularization scheme~\eqref{regularization}) and using the same reasoning as in the proof of Lemma~\ref{approx_inverse} we see that actually such quantities will vanish in the $\eps\rightarrow0$ limit. Therefore $\lim\limits_{\eps\rightarrow0}\mathfrak{R}(\eps,\delta)=0$ so we may no longer consider it in subsequent computations.

\subsubsection{Second step: integrating by parts}We are now in position to address the second step of our proof, that is to use integration by parts in order to remove the singular terms that occur away from $z_0$. Indeed we can notice that
\begin{multline*}
    \partial_{x}\left(\frac{1}{(z_x-x)_\eps}\ps{V_{\gamma e_i,\eps}(x)\V_\eps}_\delta\right)=\left(\frac{1+\frac12\ps{\alpha_k,\gamma e_i}}{(z_k-x)^2_\eps}+\sum_{l\neq k}\frac{\frac12\ps{\alpha_l, \gamma e_i}}{(z_k-x)_\eps(z_l-x)_\eps}\right)\ps{V_{\gamma e_i,\eps}(x)\V_\eps}_\delta\\
    +\frac{\mu\gamma^2}{2}\sum_{j=1}^2\int_\C\frac{\ps{e_i,e_j}\theta_\delta(z_0-x_2)}{(z_k-x)_\eps(x-x_2)_\eps}\ps{V_{\gamma e_i,\eps}(x)V_{\gamma e_j,\eps}(x_2)\V_\eps}_\delta d^2x_2.
\end{multline*}
This implies, using integration by parts (thanks to Lemma~\ref{integrability} this is indeed possible), that
\begin{align*}
&2\mu\sum_{i=1}^2\ps{h_2,e_i}\int_\C\sum_{k=1}^N\frac{\theta_{\delta}(z_0-x)\omega_{3-i}(\alpha_k)}{(z_0-x)(z_k-x)_\eps}\left(\frac{1+\frac12\ps{\alpha_k,\gamma e_i}}{(z_k-x)_\eps}+\sum_{l\neq k}\frac{\frac12\ps{\alpha_l, \gamma e_i}}{(z_l-x)_\eps}\right)\ps{V_{\gamma e_i,\eps}(x)\V_\eps}_\delta d^2x\\
&=-2\mu\sum_{i=1}^2\ps{h_2,e_i}\int_\C\sum_{k=1}^N\partial_x\left(\frac{\theta_{\delta}(z_0-x)}{(z_0-x)}\right)\frac{\omega_{3-i}(\alpha_k)}{(z_k-x)_\eps}\ps{V_{\gamma e_i,\eps}(x)\V_\eps}_\delta d^2x\\
&-(\mu\gamma)^2\sum_{i,j=1}^2\ps{h_2,e_i}\ps{e_i,e_j}\int_{\C^2}\sum_{k=1}^N\frac{\theta_{\delta}(z_0-x_1)\theta_{\delta}(z_0-x_2)\omega_{3-i}(\alpha_k)}{(z_0-x_1)(z_k-x_1)_\eps(x_1-x_2)_\eps}\ps{V_{\gamma e_i,\eps}(x_1)V_{\gamma e_j,\eps}(x_2)\V_\eps}_\delta d^2x_1d^2x_2.
\end{align*}
Since $B(e_i,\alpha_k)+2C(\alpha_k,e_i,e_i)=2\ps{h_2,e_i}\omega_{i}(\alpha_k)+\ps{h_2,e_i}\omega_{3-i}(\alpha_k)=\ps{h_2,e_i}\ps{e_i,\alpha_k}$, we can use again integration by parts to get that 
\begin{align*}
    &\frac{\mu\gamma}{2}\sum_{i=1}^2\int_\C\sum_{k=1}^N\frac{q B(e_i,\alpha_k)+2C(\alpha_k,\gamma e_i,e_i)}{(z_0-x)^2(z_k-x)_\eps}\theta_{\delta}(z_0-x)\ps{V_{\gamma e_i,\eps}(x)\V_\eps}_\delta d^2x\\
    &+2\mu\sum_{i=1}^2\ps{h_2,e_i}\int_\C\sum_{k=1}^N\frac{\theta_{\delta}(z_0-x)\omega_{3-i}(\alpha_k)}{(z_0-x)(z_k-x)_\eps}\left(\frac{1+\frac12\ps{\alpha_k,\gamma e_i}}{(z_k-x)_\eps}+\sum_{l\neq k}\frac{\frac12\ps{\alpha_l, \gamma e_i}}{(z_l-x)_\eps}\right)\ps{V_{\gamma e_i,\eps}(x)\V_\eps}_\delta d^2x\\
\end{align*}
is actually equal to 
\begin{align*}
&-\mu q\sum_{i=1}^2\ps{h_2,e_i}\int_\C\partial_x\left(\frac{\theta_{\delta}(z_0-x)}{(z_0-x)^2}\right)\ps{V_{\gamma e_i,\eps}(x)\V_\eps}_\delta d^2x\\
&-2\mu\sum_{i=1}^2\ps{h_2,e_i}\int_\C\sum_{k=1}^N\frac{\partial_x\theta_{\delta}(z_0-x)}{(z_0-x)}\frac{\ps{\alpha_k,\omega_{3-i}}}{(z_k-x)_\eps}\ps{V_{\gamma e_i,\eps}(x)\V_\eps}_\delta d^2x\\
&-\frac{(\mu\gamma)^2q}{2} \sum_{i,j=1}^2\ps{h_2,e_i}\ps{e_i,e_j}\int_{\C^2}\frac{\theta_{\delta}(z_0-x_1)\theta_{\delta}(z_0-x_2)}{(z_0-x)^2(x_1-x_2)_\eps}\ps{V_{\gamma e_i,\eps}(x)\V_\eps}_\delta d^2x\\
&-(\mu\gamma)^2\sum_{i,j=1}^2\ps{h_2,e_i}\ps{e_i,e_j}\int_{\C^2}\sum_{k=1}^N\frac{\theta_{\delta}(z_0-x_1)\theta_{\delta}(z_0-x_2)\omega_{3-i}(\alpha_k)}{(z_0-x_1)(z_k-x_1)_\eps(x_1-x_2)_\eps}\ps{V_{\gamma e_i,\eps}(x_1)V_{\gamma e_j,\eps}(x_2)\V_\eps}_\delta d^2x_1d^2x_2.
\end{align*}
As a consequence we see that in the expression of 
\[q^2\mathrm{I}_{1}+\frac q2\mathrm{I}_{2}+\mathrm{I}_3-\sum_{k=1}^N\left(\frac{w^{(3)}(\alpha_k)}{(z_0-z_k)^3}+\frac{\mathrm J_{-1}^{(k)}}{(z_0-z_k)^2}+\frac{\mathrm J_{-2}^{(k)}}{z_0-z_k}\right)
\]
the only remaining terms are the $1$-fold integral given by
\begin{equation*}
\mathfrak{R}_1(\eps,\delta)\coloneqq -\mu \sum_{i=1}^2\ps{h_2,e_i}\int_\C\left( \frac{q\partial_x\theta_{\delta}(z_0-x)}{(z_0-x)^2}+ 2 \sum_{k=1}^N\frac{\partial_x\theta_{\delta}(z_0-x)}{(z_0-x)}\frac{\ps{\alpha_k,\omega_{3-i}}}{(z_k-x)_\eps}\right)\ps{V_{\gamma e_i,\eps}(z)\V_\eps}_\delta d^2x
\end{equation*}
as well as, using the definitions of $B$ and $C$, the $2$ and $3$-fold integrals
\begin{align*}
    &+2\mu^2\gamma\sum_{i\neq j}\ps{h_2,e_i}\int_{\C^2} \frac{\theta_\delta(z_0-x_1)}{(z_0-x_1)^2}\frac{\theta_\delta(z_0-x_2)}{(x_1-x_2)_\eps}\ps{V_{\gamma e_i,\eps}(x_1)V_{\gamma e_j,\eps}(x_2)\bm{\mathrm V}_\eps}_\delta d^2x_1d^2x_2\\
    &-2\mu^2\gamma\sum_{i\neq j}\ps{h_2,e_i}\int_{\C^2} \frac{\theta_\delta(z_0-x_1)}{(z_0-x_1)}\frac{\theta_\delta(z_0-x_2)}{(x_1-x_2)_\eps^2}\ps{V_{\gamma e_i,\eps}(x_1)V_{\gamma e_j,\eps}(x_2)\bm{\mathrm V}_\eps}_\delta d^2x_1d^2x_2\\
    &+(\mu\gamma)^2\sum_{i\neq j}\ps{h_2,e_i}\int_{\C^2}\frac{\theta_\delta(z_0-x_1)\theta_\delta(z_0-x_2)}{(z_0-x_1)(x_1-x_2)_\eps}\sum_{k=1}^N \frac{\ps{\alpha_k,e_i}}{(z_k-x_1)_\eps}\ps{V_{\gamma e_i,\eps}(x_1)V_{\gamma e_j,\eps}(x_2)\V_\eps}_\delta d^2x_1d^2x_2\\
	&+(\mu\gamma)^3\sum_{i\neq j}\ps{h_2,e_i}\int_{\C^3} \frac{\theta_\delta(z_0-x_1)\theta_\delta(z_0-x_2)\theta_\delta(z_0-x_3)}{(z_0-x_1)(x_1-x_2)_\eps(x_1-x_3)_\eps}\ps{V_{\gamma e_i,\eps}(x_1)V_{\gamma e_i,\eps}(x_2)V_{\gamma e_j,\eps}(x_3)\bm{\mathrm V}_\eps}_\delta d^2x_1d^2x_2d^2x_3.
\end{align*}
We can use integration by parts in the same way as we have proceeded for the $1$-fold integrals. For this we notice that the above quantity is nothing but
\[
-2\mu^2\gamma\sum_{i\neq j}\ps{h_2,e_i}\int_{\C^2}\partial_{x_1}\left( \frac{\theta_\delta(z_0-x_1)}{(z_0-x_1)}\frac{\theta_\delta(z_0-x_2)}{(x_1-x_2)_\eps}\ps{V_{\gamma e_i,\eps}(x_1)V_{\gamma e_j,\eps}(x_2)\bm{\mathrm V}_\eps}_\delta\right) d^2x_1d^2x_2+\mathfrak{R}_2(\eps,\delta)
\]
where we have introduced another remainder term
\begin{equation*}
    \mathfrak{R}_2(\eps,\delta)\coloneqq -2\mu^2\gamma\sum_{i,j=1}^2\ps{h_2,e_i}\int_{\C^2}\frac{\partial_{x_1}\theta_{\delta}(z_0-x_1)\theta_{\delta}(z_0-x_2)}{(z_0-x_1)(x_1-x_2)_\eps}\ps{V_{\gamma e_i,\eps}(x_1)V_{\gamma e_j,\eps}(x_2)\V_\eps}_\delta d^2x_1d^2x_2.
\end{equation*}
Combining all the terms we get
\[
q^2\mathrm{I}_{1}+\frac q2\mathrm{I}_{2}+\mathrm{I}_3-\sum_{k=1}^N\left(\frac{w^{(3)}(\alpha_k)}{(z_0-z_k)^3}+\frac{\mathrm J_{-1}^{(k)}}{(z_0-z_k)^2}+\frac{\mathrm J_{-2}^{(k)}}{z_0-z_k}\right)=\mathfrak{R}(\eps,\delta)+\mathfrak{R}_1(\eps,\delta)+\mathfrak{R}_2(\eps,\delta).
\]
\subsubsection{Last step: taking the limit}
Therefore in order to prove the Ward identity~\eqref{Ward_higher} it suffices to show that the remaining terms $\mathfrak{R}_{1,2}$ actually vanish as $\eps,\delta\rightarrow0$, since we have already seen that the term $\mathfrak{R}(\eps,\delta)$ coming from the symmetrization was disappearing in the limit. As we will see this is ensured by the regularity of the correlation functions, as stated in Proposition~\ref{regularity}. 

Indeed, let us start by considering $\mathfrak{R}_1(\eps,\delta)$. We need to ensure the convergence as $\eps,\delta\rightarrow0$ of the one-fold integral involved. To start with, we can use integration by parts following the lines of the proof of Proposition~\ref{regularity} to see that the limit of this term is actually given by the $\delta\rightarrow0$ limit of expressions of the type
\[
\int_\C \frac{\partial_x\theta_\delta(x)}{x^2}F_\delta(x)d^2x,
\]
where, thanks to Proposition~\ref{regularity}, $(F_\delta)_{\delta>0}$ is uniformly bounded in $C^1$ in a neighbourhood of the origin. Noticing that $\partial_x\theta_\delta(x)$ is supported in the annulus $A(1/2,1)$ of radii $1/2,1$ centered at the origin and making the change of variable $x\leftrightarrow\delta x$ the latter may be put under the form 
\[
\int_{A(1/2,1)} \frac{\partial_x\theta_1(x)}{x^2}\frac{F_\delta(\delta x)}{\delta}d^2x.
\]
Since $F_\delta$ is $C^1$ we can write its Taylor expansion near the origin; also since $\theta(x)$ is rotation invariant we know that
\[
  \int_{A(1/2,1)} \frac{\partial\theta_1(x)}{x^2}x^kd^2x=\int_{A(1/2,1)} \frac{\partial\theta_1(x)}{x^2}\bar x^{-k}d^2x=0  
\]
for any integer $k\neq3$. As a consequence integrating the Taylor expansion $F_\delta(\delta x)=F_\delta(0)+\delta \left(x \partial_xF_\delta(0)+\bar x \partial_{\bar x}F_\delta(0)\right) + o(\delta)$ yields $0$ plus a term which does vanish in the limit.
The expression $\mathfrak{R}_2(\eps,\delta)$ vanishes using the same reasoning; one subtlety here being that we have to consider what happens when $x_1$ and $x_2$ get close one to the other. However the same reasoning as the one we have described above combined with the one used in the proof of Proposition~\ref{regularity} allows to conclude in the same way. This shows that the following is true:
\[
    \lim\limits_{\delta\rightarrow0}\lim\limits_{\eps\rightarrow0} \left(\ps{\bm {\mathrm W}_\eps(z_0)\bm{\mathrm V}_\eps}_\delta+\frac18\sum_{l=1}^N\left(\frac{w(\alpha_l)}{(z_0-z_l)^3}
    +\frac{\bm {\mathcal W_{-1,\eps}}(z_l,\alpha_l)}{(z_0-z_l)^2}+\frac{\bm {\mathcal W_{-2,\eps}}(z_l,\alpha_l)}{z_0-z_l}\right)\ps{\prod_{k=1}^NV_{\alpha_k,\eps}(z_k)}_\delta\right)=0.
\]
To finish up with the proof of our main result, it remains to show that the limiting quantities 
\[
%\ps{\bm {\mathrm W}(z_0)\bm{\mathrm V}}=\lim\limits_{\delta\rightarrow0}\lim\limits_{\eps\rightarrow0} \ps{\bm {\mathrm W}_\eps(z_0)\bm{\mathrm V}_\eps}_\delta\quad\text{and}\quad
\bm {\mathcal W}_{-i}(z_l,\alpha_l)\ps{\bm{\mathrm V}}=\lim\limits_{\delta\rightarrow0}\lim\limits_{\eps\rightarrow0}\bm {\mathcal W}_{-i,\eps}(z_l,\alpha_l)\ps{\bm{\mathrm V}_\eps}_\delta
\]
do exist for $i=1,2$. Convergence of $\bm {\mathcal W}_{-1}(z_l,\alpha_l)\ps{\bm{\mathrm V}_\eps}_\delta$ follows from the very same argument that allows to prove the differentiability of the correlation functions. Namely quantities of the form \[\int_\C\frac{1}{(z_l-x)_\eps}\ps{V_{\gamma e_i,\eps}(x)\V_\eps}d^2x\] admit a limit when $\eps\rightarrow0$. As for $\bm {\mathcal W}_{-2,\eps}(z_l,\alpha_l)\ps{\bm{\mathrm V_\eps}}_\delta$, we note that \[
q\left(B(e_i,\alpha)-B(\alpha, e_i)\right)+2C(\alpha,\gamma e_i-\alpha,e_i)=\frac4\gamma\ps{h_2,e_i}\omega_{3-i}(\alpha)\left(1+\frac12\ps{\alpha,\gamma e_i}\right).
\]As a consequence symmetrization identities allow to rewrite the integrals in $\bm {\mathcal W}_{-2,\eps}(z_k,\alpha_k)\ps{\bm{\mathrm V_\eps}}$ as 
\begin{align*}
&-\mu\gamma\sum_{i=1}^2\int_\C\sum_{l\neq k}\left(\frac{C^{\sigma}(\alpha_k,\alpha_l,e_i)}{(z_k-z_l)_\eps(z_k-x)_\eps}-\frac{\ps{h_2,e_i}\omega_{3-i}(\alpha_k)\ps{\alpha_l,e_i}}{(z_l-x)_\eps(z_k-x)_\eps}\right)\ps{V_{\gamma e_1,\eps}(x)\V_\eps}_\delta d^2x\\
&+(\mu\gamma)^2\int_{\C^2}\frac{\ps{e_1,\alpha_k}}{(z_k-x_1)_\eps(x_1-x_2)_\eps}+\frac{\ps{e_2,\alpha_k}}{(z_k-x_2)_\eps(x_1-x_2)_\eps}\ps{V_{\gamma e_1,\eps}(x_1)V_{\gamma e_2,\eps}(x_2)\V_\eps}_\delta d^2x_1d^2x_2.
\end{align*}
We can proceed in the same way as in the proof that the correlation functions are $C^2$ (see Subsection~\ref{proof_C2}) to show that the limit does exist. This wraps up the proof of our main result.

%%%%%%%%%%%%%%%%%%%%%%%%%%%%%%%%%%%%%%%%%%%%%%%%%%%%%%%%%%%%%%%%%%%%%%%%%%%%%%%%%%%%%%%%%%%%%%%%%%%%%%%%%%%%%%%%%%%%%%%%%%%%%%%%%%%%%%%%%%%%%%%%     
\subsection{Global Ward identities}\label{sub:global_Ward}
The Ward identity \eqref{Ward_higher} that we have just proved shows that the probabilistic model thus defined is indeed consistent with the expectations of the physics literature and may be understood as a manifestation of the higher-spin symmetry enjoyed by the $\mathfrak{sl}_3$ Toda theory. In addition to this identity there is a second building block which is fundamental in the study of Toda theories: the existence of global Ward identities~\eqref{global_Ward} which are manifestations of the (global) higher-spin symmetry of the theory. In order to prove that the equations~\eqref{global_Ward} hold we will prove that the higher-spin current is \lq\lq holomorphic at infinity", a fact which is axiomatic in the physics literature and by which is meant that 
\[
    \bm{\mathrm{W}}(z_0)\sim\frac{1}{z_0^6}
\]
as $z_0\rightarrow\infty$. To do so we rely on the fact that $\bm{\mathrm{W}}$ behaves like a covariant tensor of order three in the following sense.
\begin{proposition}\label{holo_inf}
Under the Seiberg bounds~\eqref{seiberg_bounds} and for any M\"obius transform $\psi$ of the plane
\begin{equation}
    \ps{\bm {\mathrm W}(z_0)\prod_{k=1}^NV_{\alpha_k}(z_k)} =\psi'(z_0)^3\prod_{l=1}^N\norm{\psi'(z_l)}^{2\Delta_{\alpha_l}}\ps{\bm {\mathrm W}(\psi(z_0))\prod_{k=1}^NV_{\alpha_k}(\psi(z_k))}.
\end{equation}
In particular when $\psi:z\mapsto\frac1z$ the asymptotic behaviour of $\bm{\mathrm{W}}(z_0)$ as $z_0\rightarrow\infty$ is given by
\begin{equation}\label{eq:asymptot}
   \ps{\bm {\mathrm W}(z_0)\prod_{k=1}^NV_{\alpha_k}(z_k)} \sim-\frac{1}{z_0^6}\prod_{l=1}^N\norm{z_l}^{-4\Delta_{\alpha_l}}\ps{\bm {\mathrm W}(0)\prod_{k=1}^NV_{\alpha_k}(\frac1{z_k})}.
\end{equation}
\end{proposition}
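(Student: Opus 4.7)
The plan is to establish the covariance identity by reducing to the generators of the Möbius group and treating each one at the level of the regularized path integral, then passing to the limit. Since both sides of the claimed identity behave multiplicatively under composition of $\psi$'s, it suffices to handle translations $z\mapsto z+a$, complex dilations $z\mapsto \lambda z$, and the inversion $\sigma:z\mapsto 1/z$. Translations are direct for both the correlation function (which is translation invariant as a consequence of conformal covariance, see~\cite[Theorem~3.1]{Toda_construction}) and the current (which depends on $\varphi_\eps$ only through its derivatives). Complex dilations produce, on the one hand, the scaling factor $|\lambda|^{2\Delta_{\alpha_l}}$ for each Vertex Operator from the scaling covariance of Toda correlation functions, and on the other hand a factor $\lambda^3$ coming from the three derivatives of $\varphi_\eps$ appearing in the Miura expression~\eqref{eq:W_tensor}, which is exactly $\psi'(z_0)^3$.

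The substantive case is the inversion $\sigma$. The plan is to perform the change of variables $\varphi \leftrightarrow \varphi\circ\sigma + Q\ln|\sigma'|$ in the probabilistic representation~\eqref{def:TPI}. The spherical metric $\hat g$ is $\sigma$-invariant and the Green function $G_{\hat g}$ is Möbius-invariant, so the GFF $\X$ is invariant in law under pullback by $\sigma$ (up to a zero-mode shift absorbed in the $\bm c$-integration). Under this substitution the Vertex Operators produce the factors $|\sigma'(z_l)|^{2\Delta_{\alpha_l}}$, the GMC measures and the Toda action are preserved, and a direct chain-rule computation starting from the Miura formula~\eqref{eq:W_tensor} shows that the regularized current transforms as $\sigma'(z_0)^3\,\bm{\mathrm W}_\eps(\sigma(z_0))$ plus corrections built from $\sigma''/\sigma'$ and $\sigma'''/\sigma'$. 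These corrections organize themselves as multiples of the Schwarzian derivative $\{\sigma,z\}$, and vanish identically because $\sigma$ is Möbius. A guiding check is the analogous computation for the stress-energy tensor: with $\tilde\varphi(z)=\varphi(1/z)-2Q\ln|z|$ one verifies directly that $\ps{Q,\partial^2\tilde\varphi}-\ps{\partial\tilde\varphi,\partial\tilde\varphi}$ equals $\frac1{z^4}\bm{\mathrm T}(1/z)=\sigma'(z)^2\bm{\mathrm T}(\sigma(z))$, and the spin-three analogue is an algebraically longer but formally identical calculation for which the specific shift $-\tfrac q8\partial\bm{\mathrm T}$ included in the definition of $\bm{\mathrm W}$ is precisely what cancels the residual anomaly.

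The main obstacle is that the Euclidean $\eps$-mollification in~\eqref{regularization} does not commute with pullback by $\sigma$, so the change of variables is only exact at the regularized level up to discrepancies depending on $\eps$ and $\delta$. These must be shown to vanish in the $\eps,\delta\to 0$ limit by estimates entirely analogous to those deployed in Section~\ref{section_proof}, with Lemma~\ref{approx_inverse} and the integrability bounds of Lemma~\ref{integrability} as key inputs, together with the $C^2$-regularity of the correlation function from Proposition~\ref{regularity}. The asymptotic~\eqref{eq:asymptot} then follows by specializing to $\psi=\sigma$, observing that $\sigma'(z_0)^3=-1/z_0^6$ and $|\sigma'(z_l)|^{2\Delta_{\alpha_l}}=|z_l|^{-4\Delta_{\alpha_l}}$, and using continuity of the right-hand side at $\sigma(z_0)=1/z_0\to 0$.
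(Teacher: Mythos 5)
Your proposal is correct in substance, and its algebraic core --- the chain-rule computation showing that $\bm{\mathrm W}$ applied to $\varphi_\eps\circ\psi+Q\ln\norm{\psi'}$ equals $\psi'^3\,\bm{\mathrm W}(\varphi_\eps)\circ\psi$, with the residual $\psi''$, $\psi'''$ terms cancelling for M\"obius maps --- is exactly the computation the paper carries out. Where you diverge is in the organization of the probabilistic part. The paper does not decompose $\psi$ into generators and does not redo a change of variables in the path integral: it treats a general M\"obius map in one stroke and then invokes the already-established covariance identity~\cite[Equation (3.8)]{Toda_construction}, valid for an arbitrary continuous functional $F$ of the field, namely
\begin{equation*}
\lim_{\eps\to0}\ps{F(\varphi_\eps)\prod_{k=1}^NV_{\alpha_k,\eps}(z_k)}=\prod_{k=1}^N\norm{\psi'(z_k)}^{2\Delta_{\alpha_k}}\lim_{\eps\to0}\ps{F(\varphi_\eps\circ\psi+Q\ln\norm{\psi'})\prod_{k=1}^NV_{\alpha_k,\eps}(\psi(z_k))},
\end{equation*}
applied with $F=\bm{\mathrm W}$. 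This outsources all the measure-theoretic work (invariance in law of the GFF, behaviour of the GMC and of the zero mode, and the mollification/pullback discrepancy you rightly flag as the main obstacle) to the construction paper, so no estimates from Section~\ref{section_proof} or Lemmas~\ref{approx_inverse} and~\ref{integrability} are needed here. Your route is more self-contained but considerably heavier, since you would have to re-prove at the regularized level what~\cite{Toda_construction} already provides; note also that for dilations the spherical metric is not preserved, so your "scaling covariance" step is in any case an appeal to the same cited conformal covariance rather than to an isometry argument, which somewhat undercuts the benefit of the generator decomposition. Two small remarks: your observation that the shift $-\frac q8\partial\bm{\mathrm T}$ in the definition of $\bm{\mathrm W}$ is what makes the anomalous terms cancel is consistent with the paper's computation; and both your argument and the paper's share the same mild gap, namely that $\bm{\mathrm W}$ involves Wick products of derivatives of the field and is therefore not literally a continuous functional on $\mathrm{H}^{-1}$, so the covariance identity is really being applied at the $\eps$-regularized level before passing to the limit.
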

Assume for now that this statement holds. Then we see that, when $z_0\rightarrow\infty$ the left-hand side in the Ward identity~\eqref{Ward_higher} is therefore  asymptotic to $\frac{1}{z_0^6}$. On the other hand the leading term in the right-hand side of Equation~\eqref{Ward_higher} is given (up to the prefactor $-\frac18$) by 
\[
    \frac1{z_0}\sum_{l=1}^N \bm {\mathcal W_{-2}}(z_l,\alpha_l)\ps{\prod_{k=1}^NV_{\alpha_k}(z_k)}.
\]
Therefore in order for these two asymptotics to be consistent we need to assume that that the $n=0$ global Ward identity~\eqref{global_Ward} holds:\[
\sum_{l=1}^N \bm {\mathcal W_{-2}}(z_l,\alpha_l)\ps{\prod_{k=1}^NV_{\alpha_k}(z_k)}=0.
\]
We may proceed in the same way for the other terms that appear in the asymptotic of the right-hand side of Equation~\eqref{global_Ward}. To do so let us wite the right-hand side of Equation~\eqref{Ward_higher} as
\[
-\frac18\sum_{l=1}^N\left(\frac1{z_0^3}\frac{w(\alpha_l)}{(1-\frac{z_l}{z_0})^3}
    +\frac1{z_0^2}\frac{\bm {\mathcal W_{-1}^{(l)}}}{(1-\frac{z_l}{z_0})^2}+\frac1{z_0}\frac{\bm {\mathcal W_{-2}^{(l)}}}{1-\frac{z_l}{z_0}}\right)\ps{\prod_{k=1}^{N}V_{\alpha_k}(z_k)}
\]
and expand in powers of $\frac1{z_0}$ the terms of the form $\frac1{(1-\frac{z_l}{z_0})^p}$, $p=1,2,3$. Then the asymptotic as $z_0\rightarrow\infty$ can be expanded as negative powers of $z_0$, and corresponding coefficients in this expansion are given by
\[
    \frac{1}{z_0^p}\sum_{l=1}^N \left(z_l^{p-1}\bm {\mathcal W_{-2}}(z_l,\alpha_l)+(p-1)z_l^{p-2}\bm {\mathcal W_{-1}}(z_l,\alpha_l)+\frac{(p-1)(p-2)}{2}z_l^{n-2}w(\alpha_l)\right)\ps{\prod_{k=1}^NV_{\alpha_k}(z_k)}.
\]
Therefore to be consistent with the asymptotic prescribed by Equation~\eqref{eq:asymptot} we need to have, as soon as $0\leq n\leq 4$,
\[
\sum_{l=1}^N \left(z_l^n\bm {\mathcal W_{-2}}(z_l,\alpha_l)+nz_l^{n-1}\bm {\mathcal W_{-1}}(z_l,\alpha_l)+\frac{n(n-1)}{2}z_l^{n-3}w(\alpha_l)\right)\ps{\prod_{k=1}^NV_{\alpha_k}(z_k)}=0.
\]
In other words the global Ward identities~\eqref{global_Ward} hold.
\begin{remark}
With a little bit of extra work it is possible to show that the right-hand side in the asymptotic of Equation~\eqref{eq:asymptot} is given by 
\[
-\frac18\sum_{l=1}^N \left(z_l^5\bm {\mathcal W_{-2}}(z_l,\alpha_l)+5z_l^{4}\bm {\mathcal W_{-1}}(z_l,\alpha_l)+10z_l^3w(\alpha_l)\right)\ps{\prod_{k=1}^NV_{\alpha_k}(z_k)}
\]
so we may not learn anything new from the exact value of the leading term in the asymptotic of the higher-spin current. 
\end{remark}
\begin{proof}[Proof of Proposition~\ref{holo_inf}]
Let us come back to the regularized correlation functions $\ps{\bm {\mathrm W}_\eps(z_0)\prod_{k=1}^NV_{\alpha_k,\eps}(z_k)}$ defined thanks to the mollified field
$\varphi_\eps=\X_\eps+\frac Q2\ln\hat g_\eps+\bm c$. Since the latter is smooth, its derivatives and the ones of $\varphi_\eps\circ\psi$ are well-defined. They are given by
\begin{align*}
    \partial\left(\varphi_\eps\circ\psi\right)&=\psi' \partial\varphi_\eps\circ\psi\\
    \partial^2\left(\varphi_\eps\circ\psi\right)&=\psi'' \partial\varphi_\eps\circ\psi+(\psi')^2 \partial^2\varphi_\eps\circ\psi\\
    \partial^3\left(\varphi_\eps\circ\psi\right)&=\psi''' \partial\varphi_\eps\circ\psi+3\psi'' \psi'\partial^2\varphi_\eps\circ\psi+(\psi')^3\partial^3\varphi_\eps\circ\psi.
\end{align*}
We can proceed in the same way for the map $\ln\norm{\psi'}$ and yields
\begin{align*}
    \partial\left(\ln\norm{\psi'}\right)=\frac{\psi''}{2\psi'}\quad\text{and}\quad\partial^2\left(\ln\norm{\psi'}\right)=\frac{\psi'''\psi'-(\psi'')^2}{2(\psi')^2}\cdot
\end{align*}
We can now apply the tensor $\bm{\mathrm{W}}$ to the field $\varphi_\eps\circ\psi+Q\ln\norm{\psi'}$ instead of $\varphi_\eps$, yielding
\begin{align*}
    &\bm{\mathrm{W}}(\varphi_\eps\circ\psi+Q\ln\norm{\psi'})=-\frac{q^2}{8}\ps{h_2,(\psi')^3\partial^3\varphi_\eps+3\psi'' \psi'\partial^2\varphi_\eps+\psi''' \partial\varphi_\eps}\circ\psi\\
    &+\frac{q}{4}\Big(\psi'\ps{h_2-h_1,\psi'' \partial\varphi_\eps+(\psi')^2 \partial^2\varphi_\eps}\ps{h_1,\partial\varphi_\eps}+\psi'\ps{h_3-h_2,\psi'' \partial\varphi_\eps+(\psi')^2 \partial^2\varphi_\eps}\ps{h_3,\partial\varphi_\eps}\\
    &+q\frac{\psi''}{2\psi'}\ps{3h_2,\psi'' \partial\varphi_\eps+(\psi')^2 \partial^2\varphi_\eps}+q\frac{\psi'''\psi'-(\psi'')^2}{2\psi'}\ps{h_2, \partial\varphi_\eps}\Big)\circ\psi\\
    &+(\psi')^3\ps{h_1,\partial\varphi_\eps}\ps{h_2,\partial\varphi_\eps}\ps{h_3,\partial\varphi_\eps}\circ\psi+\frac q2\psi'\psi''\ps{h_2,\partial\varphi_\eps}\ps{h_3-h_1,\partial\varphi_\eps}\circ\psi-q^2\frac{(\psi'')^2}{4\psi'}\ps{h_2,\partial\varphi_\eps}\circ\psi.
\end{align*}
Therefore we end up with the equality
\[
\bm{\mathrm{W}}(\varphi_\eps\circ\psi+Q\ln\norm{\psi'})=(\psi')^3\bm{\mathrm{W}}(\varphi_\eps)\circ\psi.
\]
On the other hand we know from~\cite[Equation (3.8)]{Toda_construction} that, provided that the limit exists, 
\begin{equation}
    \lim\limits_{\eps\rightarrow0}\ps{F(\varphi_\eps)\prod_{k=1}^NV_{\alpha_k,\eps}(z_k)} =\prod_{k=1}^N\norm{\psi'(z_k)}^{2\Delta_{\alpha_k}}\lim\limits_{\eps\rightarrow0}\ps{F(\varphi_\eps\circ\psi+Q\ln\norm{\psi'})\prod_{k=1}^NV_{\alpha_k,\eps}(\psi(z_k))}
\end{equation}
for any continuous map $F$ on $\mathrm{H}^{-1}(\C\to\mathfrak{h}_3^*,\hat g)$.
As a consequence with $F=\bm{\mathrm{W}}$, in the $\eps\rightarrow0$ limit we will be left with
\[
\ps{\bm {\mathrm W}(z_0)\prod_{k=1}^NV_{\alpha_k}(z_k)} =\psi'(z_0)^3\prod_{k=1}^N\norm{\psi'(z_k)}^{2\Delta_{\alpha_k}}\ps{\bm {\mathrm W}(\psi(z_0))\prod_{k=1}^NV_{\alpha_k}(\psi(z_k))}.
\]
This is the result we were looking for.
\end{proof}

%%%%%%%%%%%%%%%%%%%%%%%%%%%%%%%%%%%%%%%%%%%%%%
%%%%%%%%%%%%%%%%%%%%%%%%%%%%%%%%%%%%%%%%%%%%%%

\section{Degenerate fields and a BPZ-type equation}\label{section:BPZ}
In this section we shed light on another manifestation, beyond the Ward identities~\eqref{Ward_higher} and~\eqref{global_Ward}, of the higher-spin symmetry enjoyed by the $\mathfrak{sl}_3$ Toda theory. This is achieved by showing existence of degenerate fields ---that correspond to singular vectors in $W$-algebras--- which in turn implies a BPZ-type differential equation on certain correlation functions. To do so we rely on the explicit form of the $W$-descendent fields shown in the present document. To the best of our knowledge this approach is entirely new and differs from the standard method based on the study of singular vectors of $W$-algebras. For more details on this method see \emph{e.g.} Parts 2 and 3 in~\cite{FaLu}. Throughout this section we will denote by $\V=\prod_{k=1}^NV_{\alpha_k}(z_k)$ a product of Vertex Operators and by $\V_\eps$ its regularized counterpart.

\subsection{A first application to degenerate fields at level one}
To start with let us investigate the existence of (partially) degenerate fields at the level one. These correspond to Vertex Operators $V_\alpha$ for which the weight $\alpha$ takes a special value, which allows to write down the quantity $\bm{\mathcal W_{-1}}$ in terms of derivatives of the correlation functions.

As explained throughout this document, the first $W$-descendent can be expressed in terms of the Toda field via 
\[
\bm{\mathcal W_{-1}^{(1)}}\ps{\V}=\lim\limits_{\eps\rightarrow0}\ps{\bm{\mathrm W_{-1,\eps}}(z_1,\alpha_1)\V_\eps}
\]
where we have set, as before,
\[
\bm{\mathrm W_{-1,\eps}}(z,\alpha)=- qB(\alpha,\partial\varphi_\eps(z))
    -2C(\alpha,\alpha,\partial\varphi_\eps(z)).
\]
Our goal here is to express $\bm{\mathcal W_{-1}}$ in terms of derivatives of the field, and to do so we may introduce the first Virasoro descendent of $V_{\alpha_1}$ by setting 
\[\bm{\mathcal L_{-1}^{(1)}}\ps{\V}\coloneqq \partial_{z_1}\ps{\V}.
\]
Then we use Equation~\eqref{correl_approx} to rewrite the derivative of the correlation functions (we omit the $\delta$-dependence of the correlation functions since $z_1$ does not appear in the correlation functions we consider) as
 \begin{equation*}
  \partial_{z_1}\ps{\V_\eps} = \Big\langle\ps{\alpha_1,\partial\varphi_\eps(z_1)}\V_\eps\Big\rangle.
\end{equation*}
Hence the Virasoro descendent $\bm{\mathcal L_{-1}^{(1)}}\ps{\V}$ can be represented as the limit
\[
\lim\limits_{\eps\rightarrow0}\Big\langle\ps{\alpha_1,\partial\varphi_\eps(z_1)}\V_\eps\Big\rangle.
\]
As a consequence we can express the first $W$-descendent in terms of the first Virasoro descendent when $\bm{\mathrm W_{-1,\eps}}(z,\alpha)$ and $\ps{\alpha,\partial\varphi_\eps(z)}$ are proportional. Therefore we will say that the primary field is degenerate at the level one when there exists a real $\kappa$ for which 
\[
\bm{\mathrm W_{-1,\eps}}(z,\alpha)=\kappa\ps{\alpha,\partial\varphi_\eps(z)}.
\]
Put differently we need to find $\alpha$ such that for all $u\in\mathfrak{h}^*$,
\[
- qB(\alpha,u)
    -2C(\alpha,\alpha,u)=\kappa\ps{\alpha,u}.
\]
Taking respectively $u=e_1$ and $u=e_2$ and setting $\alpha\coloneqq \alpha^1\omega_1+\alpha^2\omega_2$ we see that we must have 
\begin{align*}
    \kappa \alpha^1= q\alpha^1-2\alpha^1 \frac{2\alpha^2+\alpha^1}3\quad\text{and}\quad \kappa \alpha^2= -q\alpha^2+2\alpha^2 \frac{2\alpha^1+\alpha^2}3
\end{align*}
which implies that either $\alpha^1\alpha^2=0$ or $q=\alpha^1+\alpha^2$.
\begin{proposition}\label{degenerate_1}
Degenerate fields at level one are given by the $V_{\alpha}$ with $\alpha$ of the form $\chi\omega_1$ or $\chi\omega_2$ for $\chi<q$, or $\lambda\omega_1 + (q-\lambda)\omega_2$ for some $0<\lambda<q$. In that case we have the relation
\begin{equation}
    \bm{\mathcal W_{-1}^{(1)}}\ps{V_{\alpha}(z_1)\prod_{k=2}^NV_{\alpha_k}(z_k)}=\frac{3w(\alpha)}{2\Delta_\alpha}\partial_{z_1}\ps{V_\alpha(z_1)\prod_{k=2}^NV_{\alpha_k}(z_k)},
\end{equation}
valid as soon as the $\bm z$ are distinct and the $\bm \alpha$ satisfies the Seiberg bounds~\eqref{seiberg_bounds}.
\end{proposition}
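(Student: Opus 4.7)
The plan is to complete what the excerpt has already begun. The proportionality condition $\bm{\mathrm W_{-1,\eps}}(z,\alpha)=\kappa\,\ps{\alpha,\partial\varphi_\eps(z)}$ is purely linear in $\partial\varphi_\eps(z)\in\mathfrak{h}_3^*$, so it suffices to test against $u=e_1,e_2$. The two scalar equations displayed in the excerpt admit three disjoint families of non-trivial solutions: $\alpha^1=0$, $\alpha^2=0$, or $\alpha^1+\alpha^2=q$, corresponding in the $(\omega_1,\omega_2)$-basis to $\alpha=\chi\omega_1$, $\alpha=\chi\omega_2$, and $\alpha=\lambda\omega_1+(q-\lambda)\omega_2$. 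The Seiberg bound $\ps{\alpha,e_i}<q$ then forces $\chi<q$ in the first two families and $0<\lambda<q$ in the third. Reading the proportionality constant off the scalar equations yields $\kappa=q-\tfrac{2\chi}{3}$, $\kappa=\tfrac{2\chi}{3}-q$, and $\kappa=\tfrac{2\lambda-q}{3}$ respectively.

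The heart of the proof is then to verify that $\kappa=\tfrac{3w(\alpha)}{2\Delta_\alpha}$ in each family, which is a direct computation using $\ps{h_i,h_j}=\delta_{ij}-\tfrac13$, $\omega_1=h_1$, $\omega_2=-h_3$ and $Q=q(h_1-h_3)$. For $\alpha=\chi\omega_1$ one finds $\ps{\alpha-Q,h_1}=\tfrac{2\chi-3q}{3}$, $\ps{\alpha-Q,h_2}=-\tfrac{\chi}{3}$, $\ps{\alpha-Q,h_3}=\tfrac{3q-\chi}{3}$, giving $w(\alpha)=\tfrac{\chi(3q-2\chi)(3q-\chi)}{27}$; combined with $\Delta_\alpha=\tfrac{\chi(3q-\chi)}{6}$ this yields $\tfrac{3w(\alpha)}{2\Delta_\alpha}=q-\tfrac{2\chi}{3}=\kappa$. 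The case $\alpha=\chi\omega_2$ follows from the Dynkin involution $h_1\leftrightarrow -h_3$, $h_2\leftrightarrow -h_2$, which fixes $Q$ and $\Delta_\alpha$ but flips the sign of $w(\alpha)$, in agreement with the sign flip of $\kappa$. For the third family, $\ps{\alpha,Q}=q^2$ and $\ps{\alpha,\alpha}=\tfrac{2}{3}(\lambda^2-\lambda q+q^2)$ give $\Delta_\alpha=\tfrac{(2q-\lambda)(q+\lambda)}{6}$, while $\ps{\alpha-Q,h_1}=\tfrac{\lambda-2q}{3}$, $\ps{\alpha-Q,h_2}=\tfrac{q-2\lambda}{3}$, $\ps{\alpha-Q,h_3}=\tfrac{\lambda+q}{3}$ give $w(\alpha)=\tfrac{(2q-\lambda)(2\lambda-q)(\lambda+q)}{27}$, whence $\tfrac{3w(\alpha)}{2\Delta_\alpha}=\tfrac{2\lambda-q}{3}=\kappa$.

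With the pointwise operator identity in hand, the final step is to take the $(\eps,\delta)\to 0$ limit. A Gaussian integration by parts in the spirit of Proposition~\ref{GaussianIPP}, but applied at the insertion point $z_1$ rather than at the current point $z_0$ (so no $\delta$-regularization is needed on the relevant side), rewrites $\ps{\ps{\alpha_1,\partial\varphi_\eps(z_1)}\V_\eps}_\delta$ as $\partial_{z_1}\ps{\V_\eps}_\delta$, and Proposition~\ref{regularity} guarantees convergence of this derivative as $\eps,\delta\to 0$. Multiplication by $\kappa=\tfrac{3w(\alpha_1)}{2\Delta_{\alpha_1}}$ then completes the argument. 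The main obstacle is the explicit algebraic verification of the second paragraph: the three cases should ultimately collapse to a single $S_3$-symmetric identity on $\mathfrak{h}_3^*$, but the natural common packaging is not transparent and each family has to be checked individually.
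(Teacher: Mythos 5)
Your proposal is correct and follows essentially the same route as the paper: the paper derives the proportionality condition $\bm{\mathrm W_{-1,\eps}}(z,\alpha)=\kappa\ps{\alpha,\partial\varphi_\eps(z)}$, tests it against $u=e_1,e_2$ to obtain $\alpha^1\alpha^2=0$ or $\alpha^1+\alpha^2=q$, and identifies $\partial_{z_1}\ps{\V_\eps}$ with $\ps{\ps{\alpha_1,\partial\varphi_\eps(z_1)}\V_\eps}$, exactly as you do. The only difference is that you explicitly verify the identification $\kappa=\tfrac{3w(\alpha)}{2\Delta_\alpha}$ in each of the three families (your computations check out), a step the paper leaves implicit.
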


%%%%%%%%%%%%%%%%%%%%%%%%%%%%%%%%%%%%%%%%%%%%%%%%%%%%%%%%
\subsection{Degenerate fields at the levels two and three}
We may proceed in the same way for other descendents at the levels two and three. Vertex Operators that satisfy similar requirements as above but for descendents at the second or third level will be called \emph{degenerate fields at the second or third level}. These are also sometimes referred to as \emph{fully degenerate fields} by contrast with degenerate fields at the first level only. In order to keep notations simple we will omit the Wick product convention in this subsection: for instance quantities such as $\ps{\partial\varphi(z),\partial\varphi(z)}$ are to be understood as $:\ps{\partial\varphi(z),\partial\varphi(z)}:$ . Similarly we no longer use the $\theta$-regularization of the correlation functions in order not to overload notations.

We start by considering the Virasoro descendents of order two. We define them by the expressions 
\begin{align}
    &\bm{\mathrm L_{-(1,1)}}(z,\alpha)\coloneqq\ps{\alpha,\partial^2\varphi(z)}+\ps{\alpha,\partial\varphi(z)}^2,\\
    &\bm{\mathrm L_{-2}}(z,\alpha)\coloneqq\ps{Q+\alpha,\partial^2\varphi(z)}-\ps{\partial\varphi(z),\partial\varphi(z)}.
\end{align}
By doing so it is immediate to see that
\begin{align*}
    \ps{\bm{\mathrm L_{-(1,1)}}(z_1,\alpha_1)\V}&=\partial_{z_1}^2\ps{\V}\\
    &=\left(\bm{\mathcal L_{-1}^{(1)}}\right)^2\ps{\V}.
\end{align*}
The effect of the $L_{-2}$ descendent on the correlation functions is less obvious; however conformal invariance allows to claim the following statement.
\begin{lemma}\label{lemma:descendents_two}
Set $\bm{\mathcal L_{-2}^{(1)}}\ps{\V}\coloneqq\lim\limits_{\eps\rightarrow0}\ps{\bm{\mathrm L_{-2,\eps}}(z,\alpha)V_{\alpha,\eps}(z)\V_\eps}$. Then
\begin{equation}
    \bm{\mathcal L_{-2}^{(1)}}\ps{V_{\alpha}(z)\V}=\sum_{k=2}^N\left(\frac{\partial_{z_k}}{z-z_k}+\frac{\Delta_{\alpha_k}}{(z-z_k)^2}\right)\ps{V_{\alpha}(z)\V}.
\end{equation}
\end{lemma}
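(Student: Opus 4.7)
The strategy rests on the algebraic identity $\bm{\mathrm L_{-2}}(z,\alpha) = \bm{\mathrm T}(z) + \ps{\alpha, \partial^2\varphi(z)}$ combined with the spin-two Ward identity of Theorem~\ref{main_theorem}. First, for $z_0$ outside $\{z, z_2, \dots, z_N\}$, that Ward identity applied to the correlation with insertions at $z, z_2, \dots, z_N$ yields
\begin{equation*}
\ps{\bm{\mathrm T}(z_0) V_\alpha(z) \V} = \left(\frac{\Delta_\alpha}{(z_0-z)^2} + \frac{\partial_z}{z_0-z}\right)\ps{V_\alpha(z)\V} + F(z_0),
\end{equation*}
with $F(z_0) \coloneqq \sum_{k=2}^N \left[\frac{\Delta_{\alpha_k}}{(z_0-z_k)^2} + \frac{\partial_{z_k}}{z_0-z_k}\right]\ps{V_\alpha(z)\V}$. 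Since $F$ is real analytic in $z_0$ near $z_0 = z$ and $F(z)$ coincides with the right-hand side of the lemma, the proof reduces to establishing the coincident-point identification
\begin{equation*}
\bm{\mathcal L_{-2}^{(1)}}\ps{V_\alpha(z) \V} = \lim_{z_0 \to z}\left\{\ps{\bm{\mathrm T}(z_0) V_\alpha(z)\V} - \left(\frac{\Delta_\alpha}{(z_0-z)^2} + \frac{\partial_z}{z_0-z}\right)\ps{V_\alpha(z)\V}\right\}.
\end{equation*}

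To establish this identification I would work at the regularized level. Applying Proposition~\ref{GaussianIPP} and Equation~\eqref{eq:IPP_product} to $\ps{\bm{\mathrm T}_\eps(z_0) V_{\alpha,\eps}(z) \V_\eps}$ for $z_0 \neq z$, the diagonal self-contractions (where the derivatives of $\varphi_\eps$ at $z_0$ contract with $V_{\alpha,\eps}(z)$) produce a coefficient $\frac{1}{2}\ps{Q,\alpha} - \frac{1}{4}\ps{\alpha,\alpha} = \Delta_\alpha$ in front of $(z_0-z)_\eps^{-2}$, thereby reproducing the $\Delta_\alpha/(z_0-z)^2$ singularity predicted by the Ward identity. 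The off-diagonal cross-contractions with $V_{\alpha_k,\eps}(z_k)$ for $k \geq 2$, after the symmetrization identities of the type used in Section~\ref{section_proof}, produce a singular part $\frac{1}{z_0-z}\partial_z \ps{V_{\alpha,\eps}(z)\V_\eps}$ plus a regular contribution. Subtracting the singularities, letting $z_0 \to z$, and then $\eps \to 0$, the regular part is identified term by term, by a second round of Gaussian IPP applied to $\ps{Q+\alpha, \partial^2\varphi_\eps(z)}$ and to the Wick-ordered product $:\ps{\partial\varphi_\eps(z), \partial\varphi_\eps(z)}:$, with $\ps{\bm{\mathrm L_{-2,\eps}}(z,\alpha) V_{\alpha,\eps}(z)\V_\eps}$.

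The main obstacle is the careful handling of the double limit $\eps \to 0$ and $z_0 \to z$: the coincident-point evaluation generates $\eps$-divergent self-contractions of order $1/(0)_\eps^2$ that have to cancel exactly. The specific combination $\ps{Q+\alpha, \partial^2\varphi(z)} - :\ps{\partial\varphi(z), \partial\varphi(z)}:$ defining $\bm{\mathrm L_{-2}}$ is chosen precisely so that, when expanded via Gaussian IPP, the divergent self-contractions recombine into the singular OPE coefficients $\Delta_\alpha/(z_0-z)^2$ and $\partial_z/(z_0-z)$, while the $(z_0-z)^0$-coefficient produces $\bm{\mathcal L_{-2}^{(1)}}$. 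This parallels and refines the cancellation machinery of Section~\ref{section_proof}, now adapted to a coincident-point setting; the integrability bounds of Lemma~\ref{integrability} and the $C^1$-regularity from Proposition~\ref{regularity} should be enough to justify the interchange of limits. Once the identification is verified, combining it with the first paragraph concludes the proof.
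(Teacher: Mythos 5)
Your opening reduction is a genuinely different route from the paper's: you invoke the already-proven spin-two Ward identity~\eqref{Ward_stress} for the $(N+1)$-point function and observe that the right-hand side of the lemma is precisely the regular part $F(z_0)$ of $\ps{\bm{\mathrm T}(z_0)V_\alpha(z)\V}$ evaluated at $z_0=z$. That step is valid (continuity of $F$ near $z_0=z$ follows from Proposition~\ref{regularity}), and it is an attractive conceptual framing. The paper does not argue this way at all: it never introduces a separated point $z_0$, but instead expands the right-hand side $\sum_{k}\bigl(\frac{\partial_{z_k}}{z-z_k}+\frac{\Delta_{\alpha_k}}{(z-z_k)^2}\bigr)\ps{V_\alpha(z)\V}$ directly via Gaussian integration by parts, applies symmetrization identities and Stokes' formula, and matches the result term by term with the Gaussian IPP expansion of $\ps{\bm{\mathrm L_{-2,\eps}}(z,\alpha)V_{\alpha,\eps}(z)\V_\eps}$ at the coincident point.

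The gap is that your ``coincident-point identification'' --- the claim that $\lim_{\eps\to0}\ps{\bm{\mathrm L_{-2,\eps}}(z,\alpha)V_{\alpha,\eps}(z)\V_\eps}$ equals $\lim_{z_0\to z}$ of the subtracted $\bm{\mathrm T}$-correlation --- carries the entire content of the lemma, and your sketch of it does not close. Two concrete problems. First, the order of limits: the quantities $\ps{\bm{\mathrm T}(z_0)V_\alpha(z)\V}$ and the subtracted singular terms are only defined after $\eps\to0$ at fixed $z_0\neq z$, so in the correct order there are \emph{no} ``$\eps$-divergent self-contractions of order $1/(0)_\eps^2$'' to cancel; the divergences you describe arise only if one sets $z_0=z$ before $\eps\to0$, which is not what either side of your identification means. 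What actually has to be shown is that two separately finite quantities --- the $z_0\to z$ limit of the subtracted separated-point expression and the $\eps\to0$ limit of the coincident-point expression --- agree, and this requires comparing their IPP expansions term by term. Second, in the IPP expansion of the coincident-point object, the integral terms carry kernels of the form $\frac{\ps{Q+\alpha,e_i}}{(z-x)_\eps^2}\ps{V_{\gamma e_i,\eps}(x)V_{\alpha,\eps}(z)\V_\eps}$, whose integrand behaves like $\norm{x-z}^{-2-\gamma\ps{\alpha,e_i}}$ near $x=z$ and is \emph{not} absolutely integrable for $\ps{\alpha,e_i}\geq 0$; it only becomes convergent after being combined with the other integral terms and integrated by parts via Stokes' formula, exactly as in the paper's proof. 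Your proposal defers all of this to ``a second round of Gaussian IPP'' justified by Lemma~\ref{integrability} and Proposition~\ref{regularity}, but those results do not by themselves control this borderline singularity; the specific recombination is where the work lies, and without it the identification --- and hence the lemma --- is not established.
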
 
\begin{proof}
At the regularized level, the right-hand side divided by the non-zero quantity $\ps{V_{\alpha}(z)\V}$ is given by
\begin{align*}
&=\sum_{k\neq l}\frac{\ps{\alpha_k,\alpha_l}}{2(z-z_k)(z_l-z_k)_\eps}+\sum_{k=1}^N\frac{\ps{\alpha_k,\alpha}}{2(z-z_k)(z-z_k)_\eps}+\frac{\Delta_{\alpha_k}}{(z-z_k)^2} \quad + \rm I\\
&=\sum_{k\neq l}\frac{\ps{\alpha_k,\alpha_l}}{4(z_l-z_k)}\left(\frac{1}{z-z_k}-\frac{1}{z-z_l} \right)+\sum_{k=1}^N\frac{\ps{\alpha_k,Q+\alpha}}{2(z-z_k)^2}-\frac{\ps{\alpha_k,\alpha_k}}{4(z-z_k)^2} \quad +o_\eps(1)\quad+  \rm I\\
&=-\sum_{k\neq l}\frac{\ps{\alpha_k,\alpha_l}}{4(z-z_k)(z-z_l)}+\sum_{k=1}^N\frac{\ps{\alpha_k,Q+\alpha}}{2(z-z_k)^2}-\frac{\ps{\alpha_k,\alpha_k}}{4(z-z_k)^2} \quad +o_\eps(1)\quad+  \rm I\\
&=-\sum_{k,l=1}^N\frac{\ps{\alpha_k,\alpha_l}}{4(z-z_k)(z-z_l)}+\sum_{k=1}^N\frac{\ps{\alpha_k,Q+\alpha}}{2(z-z_k)^2} \quad +o_\eps(1)\quad + \rm I,
\end{align*}
where $\ps{V_{\alpha}(z)\V}\rm I$ is equal to:
\[
\frac {\mu \gamma}2\sum_{i=1}^2\int_\C \sum_{k=1}^N\frac{\ps{\alpha_k,e_i}}{2(z-z_k)(z_k-x)_\eps}\ps{V_{\gamma e_i,\eps}(x)V_{\alpha,\eps}(z)\bm{\mathrm V}_\eps}_\delta d^2x.
\]
Using the same reasoning as in Section~\ref{section_proof} involving symmetrisation and Stokes' formula, the latter is equal (up to terms that vanish in the $\eps\rightarrow0$ limit) to
\begin{align*}
&\frac {\mu \gamma}2\sum_{i=1}^2\int_\C\left(\sum_{k=1}^N\frac{\ps{\alpha_k,e_i}}{2(z-z_k)(z-x)_\eps}-\frac{\ps{\alpha,e_i}+\frac2\gamma}{(z-x)^2_\eps}\right)\ps{V_{\gamma e_i,\eps}(x)V_{\alpha,\eps}(z)\bm{\mathrm V}_\eps}_\delta d^2x\\
    &-\frac {(\mu \gamma)^2}4\sum_{i,j=1}^2\int_{\C^2}\frac{\ps{e_i,e_j}}{(z-x_1)_\eps(z-x_2)_\eps}\ps{V_{\gamma e_i,\eps}(x_1)V_{\gamma e_j,\eps}(x_2)V_{\alpha,\eps}(z)\bm{\mathrm V}_\eps}_\delta d^2x_1d^2x_2.
\end{align*}
This expression coincides with the left-hand side in the $\eps\rightarrow0$ limit by applying Gaussian integration by parts in the same spirit as in Section~\ref{section_proof}, concluding the proof:
\begin{align*}
\ps{\bm{\mathrm L_{-2,\eps}}(z,\alpha)V_{\alpha,\eps}(z)\V_\eps}&=\sum_{k=1}^N\frac{\ps{Q+\alpha,\alpha_k}}{2(z-z_k)^2_\eps}\ps{V_{\alpha,\eps}(z)\bm{\mathrm V}_\eps} -\frac {\mu \gamma}2\sum_{i=1}^2\int_\C\frac{\ps{Q+\alpha,e_i}}{(z-x)^2_\eps}\ps{V_{\gamma e_i,\eps}(x)V_{\alpha,\eps}(z)\bm{\mathrm V}_\eps}d^2x\\
    -&\sum_{k,l=1}^N\frac{\ps{\alpha_k,\alpha_l}}{4(z-z_k)_\eps(z-z_l)_\eps}\ps{V_{\alpha,\eps}(z)\bm{\mathrm V}_\eps}\\
    +\frac{\mu \gamma }4&\sum_{i=1}^2\int_\C\left(\sum_{k=1}^N\frac{\ps{\alpha_k,e_i}}{(z-z_k)_\eps(z-x)_\eps}+\frac{\gamma\ps{e_i,e_i}}{(z-x)_\eps(z-x)_\eps}\right)\ps{V_{\gamma e_i,\eps}(x)V_{\alpha,\eps}(z)\bm{\mathrm V}_\eps}d^2x\\
    -&\frac {(\mu \gamma)^2}4\sum_{i,j=1}^2\int_{\C^2}\frac{\ps{e_i,e_j}}{(z-x_1)_\eps(z-x_2)_\eps}\ps{V_{\gamma e_i,\eps}(x_1)V_{\gamma e_j,\eps}(x_2)V_{\alpha,\eps}(z)\bm{\mathrm V}_\eps} d^2x_1d^2x_2.
\end{align*}
\end{proof}
Thanks to the expression of the Virasoro descendents at the level two, we can readily search for degenerate fields at the level two. As before, we think of them as Vertex Operators for which the $W$-descendent at the level two, $\bm{\mathrm W_{-2}}(z,\alpha)$, can be expressed as a linear combination of Virasoro descendents at the level two. Put differently, these are $\alpha$ such that there exist $\kappa_1,\kappa_2$ real numbers with
\[
    \kappa_1\bm{\mathrm L_{-(1,1)}}+\kappa_2\bm{\mathrm L_{-2}}=\bm{\mathrm W_{-2}}.
\]
The latter implies that $\alpha=\alpha_1\omega_1+\alpha_2\omega_2$ is a solution of the following set of equations
\begin{align*}
    \left\lbrace
    \begin{array}{cc}
      \kappa_1\alpha_1^2-2\kappa_2&=2\frac{2\alpha_2+\alpha_1}{3}\\
        \kappa_1\alpha_2^2-2\kappa_2&=-2\frac{2\alpha_1+\alpha_2}{3}\\
        \kappa_1\alpha_1\alpha_2+\kappa_2&=2\frac{\alpha_1-\alpha_2}{3}\\
        \kappa_1\alpha_1+\kappa_2(q+\alpha_1)&=-(q+\alpha_1)\frac{2\alpha_2+\alpha_1}{3}\\
        \kappa_1\alpha_2+\kappa_2(q+\alpha_2)&=(q-\alpha_2)\frac{2\alpha_1+\alpha_2}{3}.
    \end{array}
    \right.
\end{align*}
Explicit computations show the following.
\begin{proposition}
Degenerate fields at the level two are given by the Vertex Operators $V_\alpha$ with $\alpha$ of the form $-\chi\omega_{i}$ where $\chi$ is either $\frac2\gamma$ or $\gamma$ and $i\in\{1,2\}$. In that case 
\begin{equation}
\begin{split}
    \bm{\mathrm W_{-2}}(\cdot,\alpha)&=-\frac{4}{\chi}\bm{\mathrm L_{-(1,1)}}(\cdot,\alpha)-\frac{4\chi}3\bm{\mathrm L_{-2}}(\cdot,\alpha),\text{ hence}\\
    \bm{\mathcal W_{-2}^{(1)}}\ps{V_{\alpha}(z_1)\prod_{k=2}^NV_{\alpha_k}(z_k)}&=-\left(\frac{4}{\chi}\left(\bm{\mathcal L_{-1}^{(1)}}\right)^2+\frac{4\chi}3\bm{\mathcal L_{-2}^{(1)}}\right)\ps{V_{\alpha}(z_1)\prod_{k=2}^NV_{\alpha_k}(z_k)}.
\end{split} 
\end{equation}
\end{proposition}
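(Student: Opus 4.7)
My approach is to recast the claim as an operator identity in the $\eps$-regularized field and then solve the resulting algebraic system. Writing the required proportionality as
\[
\bm{\mathrm W_{-2,\eps}}(z,\alpha) = \kappa_1 \bm{\mathrm L_{-(1,1),\eps}}(z,\alpha) + \kappa_2 \bm{\mathrm L_{-2,\eps}}(z,\alpha),
\]
with $\kappa_1, \kappa_2 \in \R$ to be determined, one observes that both sides are polynomials of degree one in $\partial^2\varphi_\eps(z)$ and quadratic in $\partial\varphi_\eps(z)$ (after Wick ordering). Matching the coefficients of $\partial^2\varphi_\eps$ and those of the symmetric bilinear expression in $\partial\varphi_\eps$ decouples the problem into a quadratic condition
\[
\kappa_1 \langle \alpha, v\rangle^2 - \kappa_2 \langle v, v\rangle = 4 C(\alpha, v, v) \text{ for all } v \in \mathfrak{h}_3^*,
\]
and a linear condition
\[
\langle (\kappa_1 + \kappa_2)\alpha + \kappa_2 Q, u\rangle = q\bigl[B(u,\alpha) - B(\alpha,u)\bigr] - 2 C(\alpha,\alpha,u) \text{ for all } u \in \mathfrak{h}_3^*.
\]

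I would first exploit the quadratic condition. Writing $\alpha = \alpha_1 \omega_1 + \alpha_2 \omega_2$ and parametrizing $v$ by $v_1, v_2$ via $v_3 = -v_1 - v_2$, the three scalar equations obtained by matching the $v_1^2, v_2^2$ and $v_1 v_2$ coefficients yield, after subtraction in pairs, the factored identities
\[
\kappa_1 \alpha_1 (\alpha_1 + 2\alpha_2) = 4\alpha_1, \qquad \kappa_1 \alpha_2 (2\alpha_1 + \alpha_2) = -4\alpha_2.
\]
These force $\alpha_1 = 0$, $\alpha_2 = 0$, or $\alpha_1 + \alpha_2 = 0$. The last alternative is a parasitic branch which I would eliminate using the linear condition: setting $\alpha = \lambda(\omega_1 - \omega_2)$, the antisymmetric combination $B(u,\alpha) - B(\alpha,u)$ together with the $\kappa_2 Q$ term contribute a discrepancy proportional to $-4q\lambda$ between the $u_1$ and $u_3$ components of the linear condition (the remaining pieces being symmetric in $u_1 \leftrightarrow u_3$), and agreement forces $\lambda = 0$.

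By the Dynkin symmetry $\omega_1 \leftrightarrow \omega_2$ it suffices to analyze $\alpha = -\chi \omega_1$ with $\chi > 0$. Here the quadratic condition collapses to $\kappa_2 = -4\chi/3$ and $\kappa_1 \chi^2 = -4\chi$, hence $\kappa_1 = -4/\chi$. Substituting into the linear condition and requiring its three scalar components to agree modulo $\sum_i u_i = 0$ reduces the whole system to the single polynomial equation $\chi^2 - q\chi + 2 = 0$. With $q = \gamma + 2/\gamma$ the discriminant is $(\gamma - 2/\gamma)^2$, so the roots are exactly $\chi = \gamma$ and $\chi = 2/\gamma$; both satisfy the Seiberg bound.

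Finally, to transfer the operator identity back to correlation functions, I apply Proposition~\ref{GaussianIPP} and the iterated formula~\eqref{eq:IPP_product} to each side and take the $\eps, \delta \to 0$ limit along the lines of Section~\ref{section_proof}. The right-hand side then involves $\bm{\mathcal L_{-(1,1)}^{(1)}} \ps{\V} = \partial_{z_1}^2 \ps{\V} = (\bm{\mathcal L_{-1}^{(1)}})^2 \ps{\V}$ (by the opening computation of this section) together with $\bm{\mathcal L_{-2}^{(1)}}\ps{V_\alpha(z)\V}$, which is evaluated using Lemma~\ref{lemma:descendents_two}, yielding the announced formula. The main technical obstacle I anticipate is the elimination of the spurious branch $\alpha_1 + \alpha_2 = 0$, since the quadratic conditions alone are consistent along it and only the antisymmetric piece of the linear condition rules it out.
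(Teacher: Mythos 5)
Your proof is correct and follows essentially the same route as the paper: both reduce the proportionality $\bm{\mathrm W_{-2}}(\cdot,\alpha)=\kappa_1\bm{\mathrm L_{-(1,1)}}(\cdot,\alpha)+\kappa_2\bm{\mathrm L_{-2}}(\cdot,\alpha)$ to an algebraic system in $(\alpha_1,\alpha_2,\kappa_1,\kappa_2)$ by matching the coefficient of $\partial^2\varphi$ and the quadratic form in $\partial\varphi$, solve for $\chi\in\{\gamma,\tfrac2\gamma\}$ via $\chi^2-q\chi+2=0$, and transfer the identity to correlation functions using $\ps{\bm{\mathrm L_{-(1,1)}}(z_1,\alpha_1)\V}=\partial_{z_1}^2\ps{\V}$ together with Lemma~\ref{lemma:descendents_two}. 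The paper merely displays the system and asserts that explicit computations conclude, whereas you actually carry out the branch analysis --- including the elimination of the spurious $\alpha_1+\alpha_2=0$ branch through the antisymmetric part of the linear condition --- and your version of the system is the internally consistent one (the right-hand sides of the paper's displayed equations appear to be off by a factor of $2$ relative to the stated values of $\kappa_1,\kappa_2$).
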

A similar reasoning remains true when we turn to descendents at the level three; nevertheless calculations are slightly more involved. Like before, we shall first introduce the three Virasoro descendents at the third level by setting 
\begin{equation}
        \begin{split}
            \bm{\mathrm L_{-(1,1,1)}}(z,\alpha)\coloneqq&\ps{\alpha,\partial^3\varphi(z)}+3\ps{\alpha,\partial\varphi(z)}\ps{\alpha,\partial^2\varphi(z)}+\ps{\alpha,\partial\varphi(z)}^3\\
    \bm{\mathrm L_{-(1,2)}}(z,\alpha)\coloneqq&\ps{Q+\alpha,\partial^3\varphi(z)}+\ps{Q+\alpha,\partial^2\varphi(z)}\ps{\alpha,\partial\varphi(z)}\\
    &-2\ps{\partial^2\varphi(z),\partial\varphi(z)}-\ps{\partial\varphi(z),\partial\varphi(z)}\ps{\alpha,\partial\varphi(z)}\cdot
        \end{split}
\end{equation}
These are such that (provided that the objects exist)
\begin{equation}\label{eq:L111_L12_limit}
\begin{split}
            \ps{\bm{\mathrm L_{-(1,1,1)}}(z_1,\alpha_1)\V}&=\left(\bm{\mathcal L_{-1}^{(1)}}\right)^3\ps{\V}\\
            \ps{\bm{\mathrm L_{-(1,2)}}(z_1,\alpha_1)\V}&=\bm{\mathcal L_{-1}^{(1)}}\bm{\mathcal L_{-2}^{(1)}}\ps{\V}\cdot
        \end{split}
\end{equation}
We also introduce 
\begin{equation}
\bm{\mathrm L_{-3}}(z,\alpha)\coloneqq\ps{Q+\frac\alpha2,\partial^3\varphi(z)}-2\ps{\partial^2\varphi(z),\partial\varphi(z)}
\end{equation}
which is has been defined in order to satisfy the below equation:
\begin{equation}\label{eq:L3_limit}
\ps{\bm{\mathrm L_{-3}}(z_1,\alpha_1)\V}=\sum_{k=2}^N\left(\frac{\partial_{z_k}}{(z_1-z_k)^2}+\frac{2\Delta_{\alpha_k}}{(z_1-z_k)^3}\right)\ps{\V}.
\end{equation}
To see that we may proceed like in the proof of Lemma~\ref{lemma:descendents_two} and the very same arguments still apply. Nonetheless to motivate this claim simply note that 
\begin{align*}
&\sum_{k=1}^N\left(\frac{\partial_{z_k}}{(z-z_k)^2}+\frac{2\Delta_{\alpha_k}}{(z-z_k)^3}\right)\ps{V_{\alpha}(z)\V}\\
&=\sum_{k\neq l}\frac{\ps{\alpha_k,\alpha_l}}{2(z-z_k)^2(z_l-z_k)}+\sum_{k=1}^N\frac{\ps{\alpha_k,\alpha}}{2(z-z_k)^3}+\frac{2\Delta_{\alpha_k}}{(z-z_k)^3} \quad + \quad\text{integral terms}\\
&=\sum_{k\neq l}\frac{\ps{\alpha_k,\alpha_l}}{4(z_l-z_k)}\left(\frac{1}{(z-z_k)^2}-\frac{1}{(z-z_l)^2} \right)+\sum_{k=1}^N\frac{\ps{\alpha_k,Q+\frac\alpha2}}{(z-z_k)^3}-\frac{\ps{\alpha_k,\alpha_k}}{2(z-z_k)^3} \quad + \quad \text{integral terms}\\
&=-\sum_{k\neq l}\frac{\ps{\alpha_k,\alpha_l}}{2(z-z_k)^2(z-z_l)}+\sum_{k=1}^N\frac{\ps{\alpha_k,Q+\frac\alpha2}}{(z-z_k)^3}-\frac{\ps{\alpha_k,\alpha_k}}{2(z-z_k)^3} \quad + \quad \text{integral terms}\\
&=-\sum_{k,l=1}^N\frac{\ps{\alpha_k,\alpha_l}}{2(z-z_k)^2(z-z_l)}+\sum_{k=1}^N\frac{\ps{\alpha_k,Q+\frac\alpha2}}{(z-z_k)^3} \quad + \quad \text{integral terms}\\
&=\Big\langle-2\ps{\partial^2\varphi(z),\partial\varphi(z)}V_{\alpha}(z)\V\Big\rangle + \Big\langle\langle Q+\frac\alpha2,\partial^3\varphi(z)\rangle V_{\alpha}(z)\V\Big\rangle.
\end{align*}

We now turn to the $W$-descendents at the third level. With a reasoning similar to the one of Lemma~\ref{lemma:descendents_two} we end up with:
\begin{lemma}\label{lemma:descendents_three}
Let us set
\begin{equation}
    \begin{split}
        \bm{\mathrm W_{-3}}(z,\alpha)&\coloneqq q^2\ps{h_2,\partial^3\varphi(z)}+\frac q2\left(2B(\partial^3\varphi(z),\alpha)-B(\alpha,\partial^3\varphi(z))\right)-C(\partial^3\varphi(z),\alpha,\alpha)\\
    &-2qB(\partial^2\varphi(z),\partial\varphi(z))+4C(\partial^2\varphi(z),\partial\varphi(z),\alpha)+4C(\partial\varphi(z),\partial^2\varphi(z),\alpha)\\
    &-8\ps{h_1,\partial\varphi(z)}\ps{h_2,\partial\varphi(z)}\ps{h_3,\partial\varphi(z)}.
    \end{split}
\end{equation}
Then
\begin{equation}\label{eq:W3_limit}
   \lim\limits_{\eps\rightarrow0}\ps{\bm{\mathrm W_{-3,\eps}}(z_1,\alpha_1)\V_\eps}=\sum_{k=2}^N\left(\frac{\bm {\mathcal W_{-2}^{(k)}}}{z_1-z_k}
    +\frac{\bm {\mathcal W_{-1}^{(k)}}}{(z_1-z_k)^2}+\frac{w(\alpha_k)}{(z_1-z_k)^3}\right)\ps{\V}.
\end{equation}
\end{lemma}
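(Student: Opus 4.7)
My plan is to mirror, at the point $z_1$, the strategy developed in Section~\ref{section_proof} for the proof of the spin-three Ward identity~\eqref{Ward_higher}. Conceptually, the identity can be read as saying that $\bm{\mathrm{W}_{-3}}V_{\alpha_1}(z_1)$ is the constant-in-$(z_0-z_1)$ term in the OPE of $\bm{\mathrm{W}}(z_0)$ with $V_{\alpha_1}(z_1)$: once the three singular contributions from the $l=1$ term of~\eqref{Ward_higher} are subtracted, the remaining right-hand side is exactly $-\frac{1}{8}\sum_{k\neq 1}(\cdots)\ps{\V}$, the $-\frac{1}{8}$ normalization being absorbed in the definition of $\bm{\mathrm{W}_{-3}}$. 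This also explains a posteriori why the specific coefficients $q^2,\frac{q}{2},-1,-2q,4,-8$ entering $\bm{\mathrm{W}_{-3}}$ are the natural ones.

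Concretely, first I would apply the Gaussian integration by parts formula of Proposition~\ref{GaussianIPP} and its multi-derivative extension~\eqref{eq:IPP_product} to the three layers of $\bm{\mathrm W_{-3,\eps}}(z_1,\alpha_1)\V_\eps$: the linear $\partial^3\varphi_\eps(z_1)$ terms, the bilinear $\partial^2\varphi_\eps\cdot\partial\varphi_\eps$ terms, and the Wick-ordered cubic $\ps{h_1,\partial\varphi_\eps(z_1)}\ps{h_2,\partial\varphi_\eps(z_1)}\ps{h_3,\partial\varphi_\eps(z_1)}$ term. The Wick ordering removes the divergent self-contractions between $\partial^p\varphi_\eps(z_1)$ and the insertion $V_{\alpha_1,\eps}(z_1)$, so that the resulting expression is a sum over $k\geq 2$ of terms $\frac{P_p(\alpha_k,\alpha_1)}{(z_1-z_k)_\eps^p}$ for $p=1,2,3$ (the polynomials $P_p$ being readable off from the definitions of $B$ and $C$), plus $r$-fold GMC integrals ($r=1,2,3$) against kernels of the form $\frac{1}{(z_1-x_i)_\eps^{a_i}}\cdots$.

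In parallel, I would expand the right-hand side using the explicit definitions of $w(\alpha_k)$, $\bm{\mathrm W_{-1,\eps}}(z_k,\alpha_k)$ and $\bm{\mathrm W_{-2,\eps}}(z_k,\alpha_k)$ from Theorem~\ref{main_theorem}, and apply Gaussian integration by parts at each $z_k$ with $k\neq 1$. This produces mixed singularities of the form $\frac{1}{(z_1-z_k)^a(z_k-z_l)^b}$ and integrals localized around $z_k$, which must then be symmetrized via identities such as $\frac{1}{(z_1-z_k)^p(z_k-z_l)}=\sum_j\frac{\cdots}{(z_1-z_k)^i(z_1-z_l)^j}$ with $i+j=p+1$, exactly as in the first step of the proof of the spin-three Ward identity, so that the only singular points inside remaining integrals are localized at $z_1$. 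A term-by-term comparison then closes the argument, the crucial algebraic ingredients being the cyclic symmetry of $C$, the identity $\sum_{p=1}^3\ps{h_p,u}\ps{h_{p+1},v}=-\frac{1}{2}\ps{u,v}$, and the decomposition $\ps{u,v}=\sum_i\ps{u,h_i}\ps{h_i,v}$. The remainder terms coming from the discrepancies $\frac{x-z_k}{(x-z_k)_\eps}-1$ vanish as $\eps\to 0$ by Lemma~\ref{approx_inverse}, while the integrals converge thanks to the fusion estimates of Lemma~\ref{integrability}.

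The main obstacle will be combinatorial rather than analytic: no new probabilistic ingredient is needed beyond the tools of Section~\ref{section_proof}, but one must carefully verify that the numerical coefficients of $\bm{\mathrm{W}_{-3}}$ are exactly those producing the cancellations between the two sides after symmetrization. There are many distinct types of residues to track (three powers $(z_1-z_k)^{-p}$, three levels of integrals, and multiple tensorial channels through $B$ and $C$), and the only cyclic (not fully symmetric) nature of $C$ forces one to be meticulous with orderings when handling the cubic Wick product. Once the bookkeeping is carried out, the passage to the $\eps\to 0$ limit is handled exactly as in the proof of Theorem~\ref{main_theorem}, with the additional observation that no $\delta$-regularization is required here since $z_1$ already carries a Vertex Operator insertion, and the absolute convergence of all remaining integrals follows from items~(3)--(4) of Lemma~\ref{integrability}.
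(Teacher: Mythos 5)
Your proposal follows essentially the same route as the paper: expand both sides via the Gaussian integration by parts of Proposition~\ref{GaussianIPP} and Equation~\eqref{eq:IPP_product}, symmetrize the mixed singularities $\frac{1}{(z_1-z_k)^a(z_k-z_l)^b}$ so that everything is localized at $z_1$, handle the extra Vertex Operator at $z_1$ by recursive integration by parts, and match terms using the explicit expressions of $B$ and $C$, with the remainders controlled by Lemmas~\ref{approx_inverse} and~\ref{integrability}. This is precisely the argument given in the paper, which likewise treats the computation as a variant of Section~\ref{section_proof} with one additional insertion and no $\delta$-regularization.
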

\begin{proof}
Computations parallel the ones made in Section~\ref{section_proof}. The right-hand side in~\eqref{eq:W3_limit}, when divided by $\ps{V_{\alpha}(z)\V}$, can be expanded as
\begin{align*}
    &\sum_{k,l,p\text{ distinct}}\frac{C(\alpha_k,\alpha_l,\alpha_p)}{(z-z_k)(z_k-z_l)_\eps(z_k-z_p)_\eps}+\sum_{k\neq l}\frac{C^{\sigma}(\alpha,\alpha_k,\alpha_l)}{(z-z_k)(z_k-z)_\eps(z_k-z_l)_\eps}\\
	&+\sum_{k\neq l}\frac{q\left(B(\alpha_l,\alpha_k)-B(\alpha_k,\alpha_l)\right)+2C(\alpha_k,\alpha_l-\alpha_k,\alpha_l)}{2(z-z_k)(z_k-z_l)_\eps^2}+\frac{q B(\alpha_k,\alpha_l)+2C(\alpha_k,\alpha_k,\alpha_l)}{2(z-z_k)^2(z_k-z_l)_\eps}\\
&+\sum_{k=1}^N\frac{q\left(B(\alpha,\alpha_k)-B(\alpha_k,\alpha)\right)+2C(\alpha_k,\alpha-\alpha_k,\alpha)}{2(z-z_k)(z-z_k)_\eps^2}+\frac{q B(\alpha_k,\alpha)+2C(\alpha_k,\alpha_k,\alpha)}{2(z-z_k)^2(z_k-z)_\eps}+\frac{w(\alpha_k)}{(z-z_k)^3}\\
&+\text{integral terms}.
\end{align*}
The first quantity can be easily dealt with by recursive application of symmetrisation identities:
\begin{align*}
&\sum_{k,l,p\text{ distinct}}\frac{C(\alpha_k,\alpha_l,\alpha_p)}{(z-z_k)(z_k-z_l)(z_k-z_p)}=\frac13\sum_{k,l,p\text{ distinct}}\frac{C(\alpha_k,\alpha_l,\alpha_p)}{(z-z_k)(z-z_l)(z-z_p)}\\
&=\sum_{k,l,p}\frac{\ps{\alpha_k,h_1}\ps{\alpha_l,h_2}\ps{\alpha_p,h_3}}{(z-z_k)(z-z_l)(z-z_p)}-\sum_{k,l}\frac{C(\alpha_k,\alpha_k,\alpha_l)}{(z-z_k)^2(z-z_l)}-\sum_{k=1}^N\frac{\ps{\alpha_k,h_1}\ps{\alpha_k,h_2}\ps{\alpha_k,h_3}}{(z-z_k)^3}.
\end{align*}
Similarly we can write that
\begin{align*}
&\sum_{k\neq l}\frac{q B(\alpha_k,\alpha_l)+2C(\alpha_k,\alpha_k,\alpha_l)}{2(z-z_k)^2(z_k-z_l)}+\frac{q\left(B(\alpha_l,\alpha_k)-B(\alpha_k,\alpha_l)\right)+2C(\alpha_k,\alpha_l-\alpha_k,\alpha_l)}{2(z-z_k)(z_k-z_l)^2}\\
&=\sum_{k\neq l}\frac{q B(\alpha_k,\alpha_l)+2C(\alpha_k,\alpha_k,\alpha_l)}{2(z-z_k)^2(z_k-z_l)}+\frac{q\left(B(\alpha_l,\alpha_k)-B(\alpha_k,\alpha_l)\right)+2C(\alpha_k,\alpha_l-\alpha_k,\alpha_l)}{4(z-z_k)(z-z_l)(z_k-z_l)}\\
&=\sum_{k\neq l}\frac{q B(\alpha_k,\alpha_l)+2C(\alpha_k,\alpha_k,\alpha_l)}{2(z-z_k)^2(z-z_l)}\\
&=\sum_{k, l}\frac{q B(\alpha_k,\alpha_l)+2C(\alpha_k,\alpha_k,\alpha_l)}{2(z-z_k)^2(z-z_l)}-\sum_{k=1}^N\frac{q B(\alpha_k,\alpha_k)+2C(\alpha_k,\alpha_k,\alpha_k)}{2(z-z_k)^3}\cdot
\end{align*}
Therefore the renormalized right-hand side in~\eqref{eq:W3_limit} is actually equal to
\begin{equation}\label{eq:W3_developpe}
\begin{split}
    &\sum_{k,l,p}\frac{\ps{\alpha_k,h_1}\ps{\alpha_l,h_2}\ps{\alpha_p,h_3}}{(z-z_k)(z-z_l)(z-z_p)}+\sum_{k,l}\frac{q B(\alpha_k,\alpha_l)-2C^\sigma(\alpha,\alpha_k,\alpha_l)}{2(z-z_k)^2(z-z_l)}\\
    &+\sum_{k=1}^N\frac{-2q^2\ps{\alpha_k,h_2} +q\left(B(\alpha,\alpha_k)-2B(\alpha_k,\alpha)\right)+2C(\alpha_k,\alpha,\alpha)}{2(z-z_k)^3}+o_\eps(1)\hspace{0.2cm}+\text{integral terms}.
    \end{split}
\end{equation}
It remains to treat the integral terms. This is done with a treatment similar to the one we have used in Section~\ref{section_proof}, the only difference being that there is an extra Vertex Operator within the correlation function. Nevertheless along the same lines we get (up to a factor $\ps{V_{\alpha,\eps}(z)V_\eps}$ and a $o_\eps(1)$ quantity) a sum of the terms
\begin{align*}
&-\mu\gamma\sum_{i=1}^2\int_\C\left(\frac{-\frac{2q}\gamma \ps{e_i,h_2}}{(z-x)^3_\eps}+\sum_{k=1}^N\left(\frac{q B(\alpha_k,e_i)+ 2C(\alpha_k,\alpha_k,e_i)}{2(z-z_k)^2(z-x)_\eps}+\frac{q B(e_i,\alpha_k)+2\gamma C(e_i,e_i,\alpha_k)}{2(z-z_k)(z-x)^2_\eps}\right)\right.\\
&\left.\hspace{5cm}+\sum_{k,l=1}^N\frac{C(\alpha_k,\alpha_l,e_i)}{(z-z_k)(z-z_l)(z-x)_\eps}\right)\ps{V_{\gamma e_i,\eps}(x)V_{\alpha,\eps}(z)\bm{\mathrm V}_\eps}_\delta d^2x\\
&+(\mu \gamma)^2\sum_{i,j=1}^2\int_{\C^2}\left(\sum_{k=1}^N\frac{C(e_i,e_j,\alpha_k)}{(z-z_k)(z-x_1)_\eps(z-x_2)_\eps}+\frac{q B(e_i,e_j)+\gamma C(e_i,e_i,e_j)}{2(z-x_1)_\eps^2(z-x_2)_\eps}\right)\\
&\hspace{8cm}\ps{V_{\gamma e_i,\eps}(x_1)V_{\gamma e_j,\eps}(x_2)V_{\alpha,\eps}(z)\bm{\mathrm V}_\eps}_\delta d^2x_1d^2x_2\\
&-(\mu \gamma)^3\sum_{i,j,f=1}^2\int_{\C^3}\frac{\ps{e_i,h_1}\ps{e_j,h_2}\ps{e_f,h_3}}{(z-x_1)_\eps(z-x_2)_\eps(z-x_3)_\eps}\ps{V_{\gamma e_i,\eps}(x_1)V_{\gamma e_j,\eps}(x_2)V_{\gamma e_f,\eps}(x_3)V_{\alpha,\eps}(z)\bm{\mathrm V}_\eps} d^2x_1d^2x_2d^2x_3\\
\end{align*}
coming from the reasoning developed in Section~\ref{section_proof}, and additional ones given by
\begin{align*}
&-\mu\gamma\sum_{i=1}^2\ps{h_2,e_i}\int_\C\ps{V_{\gamma e_i,\eps}(x)V_{\alpha,\eps}(z)\bm{\mathrm V}_\eps}_\delta\left(\frac{\ps{2Q+(\frac2\gamma-3\gamma)\omega_{3-i},\alpha}+2\ps{e_i,\alpha}\ps{\omega_{3-i},\alpha}}{2(z-x)^3_\eps}\right.\\
&\hspace*{2cm}\left.+\sum_{k=1}^N\frac{\ps{\alpha,e_i}\omega_{3-i}(\alpha_k)+\ps{\alpha_k,e_i}\omega_{3-i}(\alpha)}{(z-z_k)^2(z-x)_\eps}+\frac{\ps{\alpha,e_i}\omega_{3-i}(\alpha_k)+\ps{\alpha_k,e_i}\omega_{3-i}(\alpha)}{(z-z_k)(z-x)^2_\eps}\right) d^2x\\
&-(\mu \gamma)^2\sum_{i,j=1}^2\int_{\C^2}\frac{C^\sigma(\alpha,e_i,e_j)}{(z-x_1)_\eps^2(z-x_2)_\eps}\ps{V_{\gamma e_i,\eps}(x_1)V_{\gamma e_j,\eps}(x_2)V_{\alpha,\eps}(z)\bm{\mathrm V}_\eps}_\delta d^2x_1d^2x_2
\end{align*}
that account for the extra Vertex operator in the correlation function and which are obtained by recursive integration by parts.
Therefore using Gaussian integration by parts as well as the explicit expression of $B$ and $C$ we see that the expression~\eqref{eq:W3_developpe} coincides with $\ps{\bm{\mathrm W_{-3,\eps}}(z,\alpha)V_{\alpha,\eps}(z)\V_\eps}$, up to a term that vanishes in the $\eps\rightarrow0$ limit.
\end{proof}

We are now in position to address the question of finding degenerate fields at the level three.
\begin{proposition}\label{degenerate_2}
Degenerate fields at the levels two and three are given by the $V_{\alpha}$ with $\alpha$ of the form $-\chi\omega_1$ or $-\chi\omega_2$ with $\chi\in\lbrace \gamma,\frac2\gamma\rbrace$. In that case both relations
\begin{equation}
    \bm{\mathrm W_{-2}}=-\frac{4}{\chi}\bm{\mathrm L_{-(1,1)}}-\frac{4\chi}3\bm{\mathrm L_{-2}}\quad\text{and}
\end{equation}
\begin{equation}\label{eq:degenerate_W3}
        \bm{\mathrm W_{-3}}=-\left(\frac{\chi}{3}+\frac2{\chi}\right)\bm{\mathrm L_{-3}}+\frac4{\chi}\bm{\mathrm L_{-(1,2)}}+\frac{8}{\chi^3}\bm{\mathrm L_{-(1,1,1)}},
\end{equation}
are valid as soon as the $\bm z$ are distinct and that $\bm \alpha$ satisfies the Seiberg bounds~\eqref{seiberg_bounds}.

As a consequence correlation functions of the form $\ps{V_{-\chi\omega_1}(z)\V}$ are solutions of the following BPZ identity:
\begin{multline}\label{eq:BPZ}
    \left[\frac{8}{\chi^3}\partial_z^3+\frac4\chi\partial_z\sum_{k=1}^N\left(\frac{\partial_{z_k}}{z-z_k}+\frac{\Delta_{\alpha_k}}{(z-z_k)^2}\right)+\left(\frac{\chi}{3}+\frac2\chi\right)\sum_{k=1}^N\left(\frac{\partial_{z_k}}{(z-z_k)^{2}}+\frac{2\Delta_{\alpha_k}}{(z-z_k)^3}\right)\right.\\
\left.    -\sum_{k=1}^N\left(\frac{\bm{\mathcal W_{-2}^{(k)}}}{z-z_k}+\frac{\bm{\mathcal W_{-1}^{(k)}}}{(z-z_k)^{2}}+\frac{w(\alpha_k)}{(z-z_k)^3}\right)\right]\ps{V_{-\chi\omega_1}(z)\V}=0.
\end{multline}
\end{proposition}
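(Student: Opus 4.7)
The plan is to first establish the operator identity
\begin{equation*}
\bm{\mathrm W_{-3}}(\cdot,\alpha) = -\Big(\tfrac{\chi}{3}+\tfrac{2}{\chi}\Big)\bm{\mathrm L_{-3}}(\cdot,\alpha) + \tfrac{4}{\chi}\bm{\mathrm L_{-(1,2)}}(\cdot,\alpha) + \tfrac{8}{\chi^3}\bm{\mathrm L_{-(1,1,1)}}(\cdot,\alpha)
\end{equation*}
as a polynomial identity in the derivatives of the (regularised) Toda field, and then transfer it inside correlation functions via the descendent lemmas, where it produces exactly the BPZ identity \eqref{eq:BPZ}. The level-two identity and the corresponding classification $\alpha = -\chi\omega_i$ have already been obtained in the preceding proposition on level-two degenerate fields, so the new content is verifying that the level-three identity holds for precisely this same family, with the stated coefficients.

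\textbf{Algebraic matching.} Both sides of the candidate identity are polynomial expressions in the formal variables $\ps{h_i,\partial^p\varphi}$ ($i=1,2,3$, $p=1,2,3$), organised into three structural sectors: linear in $\partial^3\varphi$, bilinear in $\partial^2\varphi\otimes\partial\varphi$, and cubic in $\partial\varphi$. Writing $\alpha = \alpha^1\omega_1+\alpha^2\omega_2$ and expanding via $h_1+h_2+h_3=0$ together with the decomposition $\ps{u,v}=\sum_i\ps{u,h_i}\ps{h_i,v}$ (valid on $\mathfrak h_3^*$), each sector becomes a finite polynomial identity in $(\alpha^1,\alpha^2,a,b,c)$. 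I would begin with the cubic sector: the identity balances $-8\ps{h_1,u}\ps{h_2,u}\ps{h_3,u}$ against $c\ps{\alpha,u}^3 - b\ps{u,u}\ps{\alpha,u}$, which, viewed as an identity in the two independent coordinates $\ps{h_1,u}, \ps{h_2,u}$, forces $\alpha^1\alpha^2 = 0$ and determines $b,c$ in terms of the surviving coordinate. Up to the symmetry $\omega_1\leftrightarrow\omega_2$ one may then set $\alpha = \alpha^1\omega_1$.

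\textbf{The main obstacle} lies in the bilinear sector: the antisymmetric combination $B(\partial^2\varphi,\alpha) - B(\alpha,\partial^2\varphi)$ and the $C$-type terms $C(\partial^2\varphi,\partial\varphi,\alpha) + C(\partial\varphi,\partial^2\varphi,\alpha)$ on the $W$-side must be balanced against the mixed product $\ps{Q+\alpha,\partial^2\varphi}\ps{\alpha,\partial\varphi}$, the symmetric term $-\ps{\partial\varphi,\partial\varphi}\ps{\alpha,\partial\varphi}$ and the $\ps{\partial^2\varphi,\partial\varphi}$ contributions on the Virasoro side. Expanding everything via the formulas~\eqref{def:B} and~\eqref{def:C} produces an overdetermined linear system in $(a,b,c)$ whose compatibility condition reduces to the quadratic relation $(\alpha^1)^2 + q\alpha^1 + 2 = 0$, i.e. $\alpha^1\in\{-\gamma,-2/\gamma\}$. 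The linear $\partial^3\varphi$ sector then fixes $a$ uniquely, and a direct verification gives $a = -(\chi/3 + 2/\chi)$, $b = 4/\chi$, $c = 8/\chi^3$ with $\chi := -\alpha^1$, matching the statement. This confirms that level-three degeneracy, imposed on top of level-two degeneracy, selects exactly $\alpha = -\chi\omega_i$ with $\chi\in\{\gamma,2/\gamma\}$.

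\textbf{From the operator identity to the BPZ equation.} Once the operator identity is verified, insert it inside the regularised correlation function $\ps{V_{-\chi\omega_1,\eps}(z)\V_\eps}$ and pass to the $\eps\to 0$ limit. The left-hand side converges, by Lemma~\ref{lemma:descendents_three}, to
\begin{equation*}
\sum_{k=1}^N\Big(\tfrac{\bm{\mathcal W_{-2}^{(k)}}}{z-z_k} + \tfrac{\bm{\mathcal W_{-1}^{(k)}}}{(z-z_k)^2} + \tfrac{w(\alpha_k)}{(z-z_k)^3}\Big)\ps{V_{-\chi\omega_1}(z)\V}.
\end{equation*}
On the Virasoro side, $\bm{\mathrm L_{-(1,1,1)}}$ contributes $\partial_z^3\ps{V_{-\chi\omega_1}(z)\V}$ via~\eqref{eq:L111_L12_limit}, $\bm{\mathrm L_{-(1,2)}}$ contributes $\partial_z$ applied to the right-hand side of Lemma~\ref{lemma:descendents_two}, and $\bm{\mathrm L_{-3}}$ contributes the sum displayed in~\eqref{eq:L3_limit}. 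Combining these three contributions with the coefficients $a,b,c$ determined above yields precisely the BPZ identity~\eqref{eq:BPZ}, completing the proof.
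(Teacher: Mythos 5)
Your proposal is correct and follows the paper's proof exactly: the paper likewise obtains the operator identity \eqref{eq:degenerate_W3} by the same coefficient-matching computation it used for the level-one and level-two degenerate fields, and then derives \eqref{eq:BPZ} by inserting that identity into a correlation function and invoking Equations \eqref{eq:L111_L12_limit}, \eqref{eq:L3_limit} and \eqref{eq:W3_limit}. Your sector-by-sector outline merely fleshes out what the paper compresses into one sentence; the only small imprecision is that the cubic sector alone also admits the branch $\alpha^1+\alpha^2=0$ (i.e.\ $\alpha\propto\omega_1-\omega_2$), but this is harmless since you restrict from the outset to the level-two degenerate family.
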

\begin{proof}
Equation~\eqref{eq:BPZ} simply corresponds to inserting the equality in Equation~\eqref{eq:degenerate_W3} within a correlation function and using Equations~\eqref{eq:L111_L12_limit},~\eqref{eq:L3_limit} and~\eqref{eq:W3_limit}. 
Proving Equation~\eqref{eq:degenerate_W3} follows from calculations very similar to those we have done for degenerate fields at the levels one and two.
\end{proof}

%%%%%%%%%%%%%%%%%%%%%%%%%%%%%%%%%%%
\subsection{Implications on a four-point correlation function}
The expression of the BPZ-type equation~\eqref{eq:BPZ} is not always tractable when it comes to deriving exact expressions for the correlation functions. However when considering a small number of Vertex Operators some cancellations occur, allowing to express the differential equation in several variables~\eqref{eq:BPZ} as a differential equation in only one variable.
Indeed let us consider a four-point correlation function with one point at infinity $\ps{V_{\alpha}(z)V_{\alpha_0}(0)V_{\alpha_1}(1)V_{\alpha_\infty}(\infty)}$, defined by the limit
\begin{equation}
\ps{V_{\alpha}(z)V_{\alpha_0}(0)V_{\alpha_1}(1)V_{\alpha_\infty}(\infty)}\coloneqq\lim\limits_{z'\rightarrow\infty}\norm{ z'}^{4\Delta_{\alpha_\infty}}\ps{V_{\alpha}(z)V_{\alpha_0}(0)V_{\alpha_1}(1)V_{\alpha_\infty}(z')}.
\end{equation}
This limit is non-zero and admits the following representation:
\begin{equation*}
\ps{V_{\alpha}(z)V_{\alpha_0}(0)V_{\alpha_1}(1)V_{\alpha_\infty}(\infty)}=\norm{z}^{-\ps{\alpha_0,\alpha}}\norm{z-1}^{-\ps{\alpha_1,\alpha}}\mathcal H(z)
\end{equation*}
where $\mathcal H(z)$ is equal to, with $\bm\alpha=(\alpha,\alpha_0,\alpha_1,\alpha_\infty)$:
\begin{equation*}
\left (\prod_{i=1}^{2} \frac{\Gamma(s_i)\mu_i^{-s_i}}\gamma\right)\E\left[\prod_{i=1}^{2}\left(\int_\C\frac{\hat g(y_i)^{-\frac{\gamma}{4}\sum_{k=1}^4\ps{\alpha_k,e_i}}}{\prod_{k=1}^3\norm{z_k-y_i}^{\gamma\ps{\alpha_k,e_i}}}M^{\gamma e_i,\hat g}(d^2y_i)\right)^{-s_i}\right].
\end{equation*}
Then by using the global Ward identities we can express the different quantities $\bm{\mathcal W_{-i}^{(k)}}$ for $k=0,1,\infty$ and $i=1,2$ in terms of $\bm{\mathcal W_{-i}}$, $i=1,2$, and $\bm{\mathcal W_{-1}^{(1)}}$. Heuristically this is due to the fact that there are six such quantities linked by five linearly independent constraints. Inverting this system of equations yields the following: 
\begin{equation}
    \begin{split}
    \bm{\mathcal W_{-3}}\ps{V_{\alpha}(z)\V}=\left[- \left(\frac2z+\frac1{z-1}\right)\bm{\mathcal W_{-2}}-\left(\frac1{z^2}+\frac2{z(z-1)}\right)\bm{\mathcal W_{-1}}+\frac{\bm{\mathcal W_{-1}^{(1)}}}{(z(z-1))^2}\right.&\\
    \left.-\frac{w_\alpha+w_\infty}{z^2(z-1)}-\frac{w_0}{z^3(z-1)}+\frac{w_1}{z(z-1)^2}\left(\frac1z+\frac1{z-1}\right)\right]\ps{V_{\alpha}(z)\V}&.
    \end{split}
\end{equation}
In the special case where the Vertex Operator $V_\alpha$ is fully degenerate, we can use Propositions~\ref{degenerate_1} and~\ref{degenerate_2} to rewrite the above using differential operators:
\begin{equation}
    \begin{split}
    &\left[\frac{8}{\chi^3}\partial_z^3+\frac4\chi\partial_z\bm{\mathcal L_{-2}}-\left(\frac{\chi}{3}+\frac2\chi\right)\bm{\mathcal L_{-3}}\right]\ps{V_{-\chi\omega_1}(z)\V}\\
    =&\left[ \left(\frac2z+\frac1{z-1}\right)\left(\frac4\chi\partial_z^2+\frac{4\chi}{3} \bm{\mathcal L_{-2}}\right)-\left(\frac1{z^2}+\frac2{z(z-1)}\right)\left(\frac{5\chi}3+\frac2\chi\right)\partial_z+\frac{\bm{\mathcal W_{-1}^{(1)}}}{(z(z-1))^2}\right.\\
    &\left.-\frac{w+w_\infty}{z^2(z-1)}-\frac{w_0}{z^3(z-1)}+\frac{w_1}{z(z-1)^2}\left(\frac1z+\frac1{z-1}\right)\right]\ps{V_{-\chi\omega_1}(z)\V}.
    \end{split}
\end{equation}
We proceed in the same way for the Virasoro descendents. Using the three global Ward identities given by conformal covariance of the correlation function of Vertex Operators (see \cite[Theorem 3.1]{Toda_construction}) we end up with
\begin{equation}
    \begin{split}
    \bm{\mathcal L_{-3}}\ps{V_{-\chi\omega_1}(z)\V}&=\left[\frac{3z^2-3z+1}{(z(z-1))^2}\partial_z+\frac{\Delta-\Delta_\infty}{z(z-1)}\left(\frac1z+\frac1{z-1}\right)\right.\\
    &\left.+\frac{\Delta_0}{z^2(z-1)}\left(\frac2z+\frac1{z-1}\right)-\frac{\Delta_1}{z(z-1)^2}\left(\frac1z+\frac2{z-1}\right)\right]\ps{V_{-\chi\omega_1}(z)\V}\\
    \bm{\mathcal L_{-2}}\ps{V_{-\chi\omega_1}(z)\V}&=\left[\frac{2z-1}{z(1-z)}\partial_z+\frac{\Delta_\infty-\Delta}{z(z-1)}-\frac{\Delta_0}{z^2(z-1)}+\frac{\Delta_1}{z(z-1)^2}\right]\ps{V_{-\chi\omega_1}(z)\V}.
    \end{split}
\end{equation}
Combining the two last equations shows that applying the following differential operator to a four-point correlation function with a degenerate field $\ps{V_{-\chi\omega_1}(z)V_{\alpha_0}(0)V_{\alpha_1}(1)V_{\alpha_\infty}(\infty)}$ 
\begin{equation}
    \begin{split}
    &\frac{8z^2(z-1)}{\chi^3}\partial_z^3+\frac{4}{\chi}z(3-5z)\partial^2_z\\
    &+\Big[\frac{2}{\chi}\left(\frac{4z^2-5z+2}{z-1}+2z(\Delta_\infty-\Delta)-2\Delta_0+2\Delta_1\frac z{z-1}\right)+\chi\frac{12z^2-15z+4}{z-1}\Big]\partial_z\\
    &+\frac{2}{\chi}\left((1-2z)\frac{\Delta_\infty-\Delta}{z-1}+(3z-2)\frac{\Delta_0}{z(z-1)}-(3z-1)\frac{\Delta_1}{(z-1)^2}\right)\\
    &+\frac{\chi}{3}\left((7-10z)\frac{\Delta_\infty-\Delta}{z-1}+(9z-6)\frac{\Delta_0}{z(z-1)}+(7-9z)\frac{\Delta_1}{(z-1)^2}\right)\\
    &+w+w(\infty)+\frac{w(0)}{z}-\frac{w(1)}{(z-1)^2}\left(2z-1\right)
    \end{split}
\end{equation}
will yield the quantity $\frac{\bm{\mathcal W_{-1}^{(1)}}}{z-1}\ps{V_{-\chi\omega_1}(z)V_{\alpha_0}(0)V_{\alpha_1}(1)V_{\alpha_\infty}(\infty)}$.

If we further assume that the Vertex Operator $V_{\alpha_1}(1)$ is semi-degenerate, \emph{i.e.} that $\alpha_1=\kappa\omega_2$ for some real $\kappa$, then this last quantity may be expressed as a derivative in $z$ of the correlation function:
\begin{align*}
\bm{\mathcal W_{-1}}(\alpha_1,1)\ps{V_{-\chi\omega_1}(z)V_{\alpha_0}(0)V_{\alpha_1}(1)V_{\alpha_\infty}(\infty)}=(q-2\ps{h_1,\alpha_1})\partial_{z_1}\Big\vert_{z_1=1}\ps{V_{-\chi\omega_1}(z)V_{\alpha_0}(0)V_{\kappa\omega_2}(z_1)V_{\alpha_\infty}(\infty)}&\\
=(q-2\ps{h_1,\alpha_1})\left(z\partial_z+\sum_{k=1}^4\Delta_k-2\Delta_\infty\right)\ps{V_{-\chi\omega_1}(z)V_{\alpha_0}(0)V_{\kappa\omega_2}(1)V_{\alpha_\infty}(\infty)}.&
\end{align*}
The first equality follows from Proposition~\ref{degenerate_1} while the second one is a consequence of the conformal covariance of the correlation functions.
Then some (lengthy) algebraic manipulations show that we can write
\iffalse 
Setting we get 
\begin{equation}
    \begin{split}
    &(z-1)z^3\partial_z^3+\left[\left(A_1+A_2+A_3+3\right)z-(B_1+B_2+1)\right]z^2\partial^2_z\\
    &+\Big[z\Big(A_1A_2+A_1A_3+A_2A_3+A_1+A_2+A_3+1\Big)-B_1B_2\Big]z\partial_z\\
    &+A_1A_2A_3z\\
    &=\frac{\chi^3}{8}\frac{z}{z-1}\left[\bm{\mathcal W_{-1}}(1)-(q-2\ps{h_1,\alpha_1})\left(z\partial_z+\sum_k\Delta_k-2\Delta_\infty+\frac{\chi\ps{h_1,\alpha_0}}{2}+\frac{\chi\ps{h_1,\alpha_1}}{2}\frac{z}{z-1}\right)\right]\\
    &+\frac{\chi^3}{8}\left[qe_1\left(\frac{2}{\chi}z\partial_z+\frac{3z}{z-1}\ps{h_1,\alpha_0}\right)-\left(2h_1h_1-h_1h_3-h_3h_3\right)\left(\frac{2}{\chi}z\partial_z+\frac{z}{z-1}\ps{h_1,\alpha_0}\right)\right]\\
    &-\frac{\chi^4}{12}\frac{z}{(z-1)^2}\Big[2h_1h_1-h_1h_3-h_3h_3+qe_1].
    \end{split}
\end{equation}

In the special case where the Vertex Operator $V_{\alpha_1}(1)$ is partially degenerate, that is when $\alpha_1=\kappa\omega_2$ for some real $\kappa$, then the right-hand side vanishes. As a consequence we can write that the four-point correlation function is given by
\fi
\[
    \ps{V_{-\chi\omega_1}(z)V_{\alpha_0}(0)V_{\kappa\omega_2}(1)V_{\alpha_\infty}(\infty)}=\norm{z}^{\chi\ps{h_1,\alpha_0}}\norm{z-1}^{\frac{\chi\kappa}3}\mathcal H(z)
\]
with $\mathcal H$ solution of the hypergeometric differential equation of order three
\begin{equation}\label{eq:hyper}
    \begin{split}
    &\Big[z\left(A_1+z\partial_z\right)\left(A_2+z\partial_z\right)\left(A_3+z\partial_z\right)-\left(B_1-1+z\partial_z\right)\left(B_2-1+z\partial_z\right)z\partial_z\Big]\mathcal H=0.
    \end{split}
\end{equation}
In the above equation we have set 
\begin{equation}
\begin{split}
    A_i&:=\frac{\chi}2\ps{\alpha_0+\kappa\omega_2-\chi\omega_1-Q,h_1}+\frac{\chi}2\ps{\alpha_\infty-Q,h_i}\\
    B_i&:=1+\frac{\chi}2\ps{\alpha_0-Q,h_1-h_{i+1}}.
\end{split}
\end{equation}

The fact that $\mathcal H$ is a (at least distributional) solution of Equation~\eqref{eq:hyper} allows to claim that $\mathcal H$ is actually a real analytic function via a standard elliptic regularity argument. Indeed, we can apply the differential operator $\partial_{\bar{z}}^3$ to Equation~\eqref{eq:hyper}. By doing so we see that $\mathcal{H}$, viewed as a function of two real variables, is a solution of a partial differential equation $\mathrm{P}\mathcal{H}=0$ with analytic coefficients on $\R^2\setminus\{(0,0);(1,0)\}$, and whose term of highest degree is given by $z^3(z-1)\Delta^3$ ($z=x+iy$) where $\Delta$ is the standard Laplace operator $\Delta f(x,y)=\frac{\partial^2 f}{\partial x^2}+\frac{\partial^2 f}{\partial y^2}$. In particular $\mathcal{H}$ is seen to be a solution of $\mathrm{P}\mathcal{H}=0$ where $\mathrm{P}$ is an analytic hypoelliptic operator on $\R^2\setminus\{(0,0);(1,0)\}$. This implies that $\mathcal{H}$ is real analytic on $\C\setminus\{0,1\}$.
%%%%%%%%%%%%%%%%%%%%%%%%%%%%%%%%%%%%%%%%%%%%%%%
%%%%%%%%%%%%%%%%%%%%%%%%%%%%%%%%%%%%%%%%%%%%%%%
\subsection{Explicit expression for the descendent fields: some perspectives}
One of the contributions of the present document is to provide an explicit expression for the descendent fields. In the above subsections we have shed light on some applications of this property to the study of degenerate fields at the first, second and third level, and its implications on a BPZ-type differential equation. Let us now stress some future directions of work allowed by this new framework.
\subsubsection{Liouville theory} Using the same reasoning as above it is possible to write down an explicit expression of the descendent fields in Liouville theory. Namely if $\lambda=(\lambda_1,\cdots,\lambda_r)$ is a finite, non-increasing sequence of positive integers (a \emph{Young diagram}) we can write 
\begin{equation}\label{eq:desc_Liou}
\bm{\mathrm L_{-\lambda_1}}\cdots \bm{\mathrm L_{-\lambda_r}}V_\alpha(z)=\bm{\mathrm L_{-\lambda}}(z,\alpha)V_\alpha(z),
\end{equation}
where $\bm{\mathrm L_{-\lambda}}(z,\alpha)$ admits an expansion of the form
\begin{equation}\label{eq:desc_Liou2}
\bm{\mathrm L_{-\lambda}}(z,\alpha)=\sum_{\norm\mu=\norm\lambda}a_{\mu\vert\lambda}(\alpha)\partial^\mu\varphi.
\end{equation}
Here the sum ranges over Young diagrams with $\norm\mu\coloneqq\sum_{i\geq 1}\mu_i=\norm\lambda$ and we have denoted 
\[
\partial^\mu\varphi\coloneqq \partial^{\mu_1}\varphi\times\cdots\times\partial^{\mu_r}\varphi.
\]
Coefficients $a_{\mu\vert\lambda}(\alpha)$ can be determined via a simple recursive procedure. Determining these coefficients could provide an algorithmic way of deriving BPZ-type equations for general singular vectors $V_{\ps{r,s}}\coloneqq V_{-(r-1)\frac\gamma2-(s-1)\frac2\gamma}$. We stress that this method, new to the best of our knowledge, can be implemented within the probabilistic framework of Liouville theory, in contrast with the usual one based on irreducible representations of Verma modules~\cite{FF}.
%%%%%%%%%%%%%%%%%%%%%
\subsubsection{Toda theories}
The reasoning presented in the context of Liouville theory extends to the study of Toda theories. Namely it is possible to write down $W$-descendents in a form similar to that of Equations~\eqref{eq:desc_Liou} and~\eqref{eq:desc_Liou2}. This could be the starting point to the derivation of BPZ-type equations for Toda theories. Again one of the key advantages of the formulation presented is that it could be implemented within our probabilistic framework and provide a general procedure for rigorously proving that BPZ-type equations hold true for Toda theories. In particular this representation should allow us to recover some of the results from~\cite{BW92}, ~\cite{BEFS} and~\cite{RSW18}.

%%%%%%%%%%%%%%%%%%%%%%%%%%%%%%%%%%%%%%%%%%%%%%%
%%%%%%%%%%%%%%%%%%%%%%%%%%%%%%%%%%%%%%%%%%%%%%%
%%%%%%%%%%%%%%%%%%%%%%%%%%%%%%%%%%%%%%%%%%%%%%%
%%%%%%%%%%%%%%%%%%%%%%%%%%%%%%%%%%%%%%%%%%%%%%%
\section{Fusion estimates and Auxiliary computations}\label{section_appendix}
In this last section we provide some auxiliary results to prove the regularity of the correlation functions.

\subsection{Proof of Lemma~\ref{approx_inverse}}
The first one corresponds to the fact that the definition we have provided for regularizing the map $x\mapsto\frac1{x^p}$ gives  the expected result in the $\eps\rightarrow0$ limit. 
\begin{proof}[Proof of Lemma~\ref{approx_inverse}]
First assume that we are given $0<\eps<\frac{\norm{x}}{4R}$. If $\norm{x+\eps(z_1-z_2)}\leq\frac{\norm{x}}2$, then necessarily either $\norm{z_1}\geq R$ or $\norm{z_2}\geq R$, and this implies that $\eta(z_1)\eta(z_2)$ vanishes since $\eta$ is compactly supported in the domain $B(0,R)$. Therefore we can apply integration by parts and the change of variables $z_i\leftrightarrow\eps z_i$ to get
\[
    \frac{1}{(x)_\eps^p}=\int_{\norm{x+\eps(z_1-z_2)}>\frac{\norm{x}}{2}}\frac{1}{(x+\eps(z_1-z_2))^p}\eta(z_1)\eta(z_2)d^2z_1d^2z_2.
\]
As a consequence
\[
    \frac{x^p}{(x)_\eps^p}-1=\int_{\norm{x+\eps(z_1-z_2)}>\frac{\norm{x}}{2}}\frac{x^p-(x+\eps(z_1-z_2))^p}{(x+\eps(z_1-z_2))^p}\eta(z_1)\eta(z_2)d^2z_1d^2z_2.
\]
We also know that, since we have assumed that $0<\eps<\frac{\norm x}{4R}$, we have the bound $\norm{\frac{\eps(z_1-z_2)}{x}}<1$ on the domain where $\eta(z_1)\eta(z_2)$ does not vanish. As a consequence we can expand the integrand as a power series in the variable $\frac{\eps(z_1-z_2)}{x}$.  Since $\eta$ is compactly supported, the integral of this power series is absolutely convergent. Therefore, we conclude by noticing that the first two terms in the expansion vanish in the limit (the first one is identically zero; for the second order term we use the $z_1\leftrightarrow z_2$ symmetry), so that we can factorize by $\left(\frac{\eps}{x}\right)^2$.

Now if we assume that $\norm x\leq 4R\eps$ then
\[
    \frac{x^p}{(x)_\eps^p}=\int_{\norm{x+\eps(z_1-z_2)}>\frac{\norm{x}}{2}}\frac{(\frac x\eps)^p}{(\frac x\eps+(z_1-z_2))^p}\eta(z_1)\eta(z_2)d^2z_1d^2z_2.
\]
Since $\norm x\leq 4R\eps$, the integrand can be bounded by the quantity $3^{-p}$, uniformly on $\eps>0$. This allows to conclude our proof of Lemma~\ref{approx_inverse}.
\end{proof}
\subsection{Technical estimates}
For future convenience we rewrite the correlation functions as
\begin{equation*}
\left\langle\V_\eps\right\rangle_{\delta}=\left (\prod_{i=1}^{2} \frac{\Gamma(s_i)\mu_i^{-s_i}}\gamma\right)\prod_{j<k}\norm{z_j-z_k}_\eps^{-\langle\alpha_j,\alpha_k\rangle }H_{\eps,\delta}(\bm z,\bm\alpha)
\end{equation*}
where $s_i$ is the one in Equation~\eqref{eq:definition_si} and
\begin{equation}\label{eq:definition_H}
H_{\eps,\delta}(\bm z,\bm\alpha)\coloneqq\E\left[\prod_{i=1}^{2}\left(\int_\C\theta_\delta(z_0-y_i)\frac{\hat g_\eps(y_i)^{-\frac{\gamma}{4}\sum_{k=1}^N\ps{\alpha_k,e_i}}}{\prod_{k=1}^N\norm{z_k-y_i}_\eps^{\gamma\ps{\alpha_k,e_i}}}M^{\gamma e_i,\hat g}_\eps(d^2y_i)\right)^{-s_i}\right].
\end{equation}
We also introduce for finite complex vectors $\bm x_1\coloneqq\left(x_1^{(1)},\cdots,x_1^{(r_1)}\right), \bm x_2\coloneqq\left(x_2^{(1)},\cdots,x_2^{(r_2)}\right)$ the notation $H^{(\bm x_1, \bm x_2)}_{\eps,\delta}(\bm z,\bm\alpha)$, which is defined in a way similar to $H_{\eps,\delta}(\bm z,\bm\alpha)$ but for the correlation functions where extra Vertex Operators with directions $\gamma e_1$ or $\gamma e_2$ are inserted:
\begin{equation*}
\ps{\prod_{i=1}^2V_{\gamma e_i,\eps}(\bm x_i)\V_\eps}_\delta\coloneqq \ps{\prod_{k=1}^{r_1}V_{\gamma e_1,\eps}\left(x_1^{(k)}\right)\prod_{j=1}^{r_2}V_{\gamma e_2,\eps}\left(x_2^{(j)}\right)\V_\eps}_\delta.
\end{equation*}
As before, this correlation is the product of a constant, a GFF prefactor and an expectation $H_{\eps,\delta}(\bm z,\bm\alpha)$. The exact expression for $H_{\eps,\delta}(\bm z,\bm\alpha)$ is lengthy to write down: one should take Equation~\eqref{eq:definition_H} and appropriately modify indices.

\subsubsection{Bounds on the Toda correlation functions: proof of Lemma~\ref{integrability}}\label{proof_integrability}
Let us start by observing that with the $\eps$-regularization, no singularities come from the prefactor. Moreover, viewed as a function of $x_i^{(l)}$, the quantity $\norm{x_i^{(l)}-z_k}_\eps^{-\gamma\ps{e_i,\alpha_k}}$ can be bounded by some constant times $\norm{1+x_i^{(l)}}_\eps^{-\gamma\ps{e_i,\alpha_k}}$. As a consequence this prefactor together with the constant part can be bounded by 
\begin{equation}\label{bound_prefactor}
C_\eps\prod_{l=1}^{r_1}\norm{1+|x_1^{(l)}|}_\eps^{-\gamma\ps{\sum\limits_{k}\alpha_k,e_1}}\prod_{m=1}^{r_2}\norm{1+|x_2^{(m)}|}_\eps^{-\gamma\ps{\sum\limits_{k}\alpha_k,e_2}}\prod_{(i,l)\neq(j,m)}\norm{x_i^{(l)}-x_j^{(m)}}_\eps^{-\frac{\gamma^2}2A_{ij}}.
\end{equation}

For item (1), we need to study the expectation part $H^{(\bm x_1,\bm x_2)}_{\eps,\delta}$ for large $\bm x_1, \bm x_2$. Inside the integrals in the expression $H^{(\bm x_1,\bm x_2)}_{\eps,\delta}$, we identify singularities of the form with $i=1,2$:
\begin{equation*}
\frac{1}{\prod_{k=1}^N\norm{z_k-y_i}_\eps^{\gamma\ps{\alpha_k,e_i}}}\frac{1}{\prod_{l=1}^{r_1}\norm{x_1^{(l)}-y_i}_\eps^{\gamma\ps{e_1,e_i}}}\frac{1}{\prod_{m=1}^{r_2}\norm{x_2^{(m)}-y_i}_\eps^{\gamma\ps{e_2,e_i}}}\cdot
\end{equation*}
We can factorize out these deterministic factors on $\bm{x}_1,\bm{x}_2$ ``at infinity'' from the expectation. Then what is left in the expectation is approximately the part corresponding to $H_{\eps,\delta}$. More precisely, write
\begin{equation*}
\frac{1}{\prod_{l=1}^{r_1}\norm{x_1^{(l)}-y_i}_\eps^{\gamma\ps{e_1,e_i}}}=\frac{\prod_{l=1}^{r_1}\norm{1+|x_1^{(l)}|}_\eps^{\gamma\ps{e_1,e_i}}}{\prod_{l=1}^{r_1}\norm{x_1^{(l)}-y_i}_\eps^{\gamma\ps{e_1,e_i}}}\cdot\prod_{l=1}^{r_1}\norm{1+|x_1^{(l)}|}_\eps^{-\gamma\ps{e_1,e_i}}
\end{equation*}
and similarly for $\bm{x}_2$. Remark that the first factor above goes to $1$ as $x^{(l)}\to\infty$. It follows that, for some positive constant $C$,
\begin{equation*}
\begin{split}
    H^{(\bm x_1,\bm x_2)}_{\eps,\delta}&\leq C\prod_{i=1}^{2}\left(\prod_{l=1}^{r_1}\norm{1+|x_1^{(l)}|}_\eps^{-\gamma\ps{e_1,e_i}}\prod_{m=1}^{r_2}\norm{1+|x_2^{(m)}|}_\eps^{-\gamma\ps{e_2,e_i}}\right)^{-s_i}\\
    &\leq C\prod_{l=1}^{r_1}\norm{1+|x_1^{(l)}|}_\eps^{\gamma\ps{\sum\limits_{k}\alpha_k-2Q+\gamma r_1e_1+\gamma r_2e_2,e_1}}\prod_{m=1}^{r_2}\norm{1+|x_2^{(m)}|}_\eps^{\gamma\ps{\sum\limits_{k}\alpha_k-2Q+\gamma r_1e_1+\gamma r_2e_2,e_2}}.
\end{split}
\end{equation*}
Combining with the bound~\eqref{bound_prefactor} for the prefactor yields:
\begin{equation*}
\begin{split}
&\left\langle\prod_{i=1}^{r_1}V_{\gamma e_1,\eps}\left(x_1^{(i)}\right)\prod_{j=1}^{r_2}V_{\gamma e_2,\eps}\left(x_2^{(j)}\right)\V_\eps \right\rangle_{\delta}\\
&\leq C_\eps\prod_{l=1}^{r_1}\norm{1+|x_1^{(l)}|}_\eps^{\gamma\ps{-2Q+\gamma r_1e_1+\gamma r_2e_2,e_1}}\prod_{m=1}^{r_2}\norm{1+|x_2^{(m)}|}_\eps^{\gamma\ps{-2Q+\gamma r_1e_1+\gamma r_2e_2,e_2}}\prod_{(i,l)\neq(j,m)}\norm{x_i^{(l)}-x_j^{(m)}}_\eps^{-\frac{\gamma^2}2A_{ij}}\\
&= C_\eps\prod_{l=1}^{r_1}\norm{1+|x_1^{(l)}|}_\eps^{-4}\prod_{m=1}^{r_2}\norm{1+|x_2^{(m)}|}_\eps^{-4}\prod_{(i,l)\neq(j,m)}\left(\frac{\norm{1+x_i^{(l)}}\norm{1+x_j^{(m)}}}{\norm{x_i^{(l)}-x_j^{(m)}}_\eps}\right)^{\frac{\gamma^2}2A_{ij}}.
\end{split}
\end{equation*}
The last term on the right-hand side being bounded this finishes the proof of item (1).

For item (2), the same reasoning remains valid and therefore the same bounds are still true when looking at the behaviour near infinity of the correlation functions when we take the $\eps,\delta\to 0$ limit of the correlation functions. 

For item (3), we investigate the behaviour when $\eps$ goes to zero of the fusion of two insertions, that is when $\norm{z_1-z_2}\rightarrow0$ but with all other insertions staying at fixed positive distance at least $\rho$. Suppose that for both $k=1,2$, $\ps{\alpha_1+\alpha_2-Q,e_k}<0$. In this case, $H_{\eps,\delta}$ remains bounded when $z_1$ and $z_2$ merge (since in that case the expectation remains well-defined according to \cite[Lemma 4.1]{Toda_construction}) and therefore the behaviour of the correlation functions is governed by the prefactor, of which the singularity is of order $\norm{z_1-z_2}^{-\ps{\alpha_1,\alpha_2}}$.

The analysis is a bit more subtle when $\ps{\alpha_1+\alpha_2,e_2}\geq \gamma+\frac{2}{\gamma}$ but $\ps{\alpha_1+\alpha_2,e_1}<\gamma+\frac2\gamma$. However we can adapt the result in the Liouville case by a simple argument. Indeed, by H\"older's inequality we have that for positive $p_1,p_2$ with $\frac{1}{p_1}+\frac{1}{p_2}=1$,
\begin{equation*}
    H_{\eps,\delta}\leq\prod_{i=1}^{2}\E\left[\left(\int_\C\theta_\delta(z_0-y_i)\frac{\hat g_\eps(y_i)^{-\frac{\gamma}{4}\sum_{k=1}^N\ps{\alpha_k,e_i}}}{\prod_{k=1}^N\norm{z_k-y_i}_\eps^{\gamma\ps{\alpha_k,e_i}}}M^{\gamma e_i,\hat g}_\eps(d^2y_i)\right)^{-p_is_i}\right]^{\frac{1}{p_i}}.
\end{equation*}
We can further ignore the $\theta_\delta$ term since $z_0$ is away from the singularities in the integrand. Since $\ps{\alpha_1+\alpha_2,e_1}<\gamma+\frac{2}{\gamma}$ and $s_1>0$, the expectation corresponding to the index $i=1$ remains bounded for any $p_1>1$ (see again \cite[Lemma 4.1]{Toda_construction}). The behaviour of the expectation with index $i=2$ when $z_1$ and $z_2$ merge is reduced to the Liouville case (the so-called freezing estimate, see~\cite[Lemma 6.5]{KRV_loc}): for any $\eta>0$,
\begin{equation*}
\E\left[\left(\int_\C\frac{\hat g_\eps(y_2)^{-\frac{\gamma}{4}\sum_{k=1}^N\ps{\alpha_k,e_2}}}{\prod_{k=1}^N\norm{z_k-y_2}_\eps^{\gamma\ps{\alpha_k,e_2}}}M^{\gamma e_2,\hat g}_\eps(d^2y_2)\right)^{-p_2s_2}\right]\leq C_\eps\norm{z_1-z_2}^{\frac{\ps{\alpha_1+\alpha_2-Q,e_2}^2}{2}-\eta}.
\end{equation*}
Choosing $p_2$ close enough to $1$, we see that the two-point fusion estimate is governed by
\begin{equation}
    \norm{z_1-z_2}^{-\ps{\alpha_1,\alpha_2}}\norm{z_1-z_2}^{-\eta'+\frac{1}{2}\ps{\alpha_1+\alpha_2-Q,e_2}^2}.
\end{equation}

Item (4) is a direct consequence of the previous bound.

For item (5): the bound of item (3) ensures local integrability near $z_1$ of $\ps{V_{\gamma e_1}(x)\V}$ in $L^p(\C)$ for $1\leq p<\frac{4}{4-\gamma^2}$ if we have $\frac{2}{\gamma}-\gamma\leq\ps{\alpha_0,e_1}<\frac{2}{\gamma}+\gamma$, for $1\leq p<\frac{2}{\gamma\ps{\alpha_0,e_1}}$ if we assume that $0<\ps{\alpha_0,e_1}<\frac{2}{\gamma}-\gamma$, and in $L^{\infty}(\C)$ if $\ps{\alpha_0,e_1}\leq0$.
\qed

\subsubsection{A generalized fusion estimate}
We now provide a generalized fusion estimate for which we consider the situation where finitely many points merge within a correlation functions. In our applications, we only need the special case where the weights of the merging Vertex Operators are of the form $\gamma e_i$, for $i=1,2$, and where points merge pairwise. This will be the setup that we investigate in this section.

We assume that $\bm z$ are distinct and that  $\bm\alpha$ are such that the Seiberg bounds~\eqref{seiberg_bounds} hold. Without loss of generality we assume that $\rho\coloneqq \min\limits_{1\leq k\leq N}\norm{z_k}$ is positive. We are interested in estimating the following correlation functions with $2p$ extra points:
\begin{equation*}
    \ps{\prod_{l=1}^{p}V_{\gamma e_{\sigma(l)}}(x_l)V_{\gamma e_{\tau(l)}}(y_l)\V}
\end{equation*}
where $\sigma(l),\tau(l)\in\{1,2\}$ so the extra weights are either $\gamma e_1$ or $\gamma e_2$.

To get a reasonable estimate, we need to first reorder the points $(\bm{x},\bm{y})$ in such a way that the collision can only happen pairwise, i.e. between each pair $x_i$ and $y_i$. In short, we restrict the locations on the points: we assume that $x_l$, $y_l$ belong to sets $A_l$, $B_l$ in such a way that any two of these sets are disjoint, except for $A_l$ and $B_l$ with the same indices.

More precisely, we introduce domains $\bm A\coloneqq A_1\times\cdots\times A_p$ and $\bm B\coloneqq B_1\times\cdots\times B_p$ such that:
\begin{itemize}
    \item For any $1\leq l\leq p$, $A_l$ and $B_l$ are either annuli or balls, and are contained in $B(0,\frac\rho2)$. This is to avoid collisions between $(\bm{x},\bm{y})$ and the given $\bm{z}$;
    \item There exists some positive distance $d>0$, for which the $(A_l)_{1\leq l\leq p}$ are at distance at least $d$ one to the other (and similarly for the $(B_l)_{1\leq l\leq p}$);
    \item For any $1\leq l,m\leq p$, the distance between $A_l$ and $B_m$ is zero if and only if $l=m$.
\end{itemize}

\begin{lemma}\label{fusion}
There exist two positive constants $C$ and $\zeta$ such that, for any $(\bm x,\bm y)\in \bm A\times\bm B$:
\begin{equation}
    \ps{\prod_{l=1}^p V_{\gamma e_{\sigma(l)},\eps}(x_l)V_{\gamma e_{\tau(l)},\eps}(y_l)\V_\eps}_\delta\leq C\prod_{l=1}^p\norm{x_l- y_l}^{-2+\zeta}
\end{equation}
uniformly in $\eps$ and $\delta$. In particular the integral
\begin{equation}
    \int_{\bm A}\int_{\bm B}\frac{1}{\prod_{l=1}^p(x_l-y_l)}\ps{\prod_{i=1}^2V_{\gamma e_i}(\bm x_i)V_{\gamma e_i}(\bm y_i)\V} d^2\bm x d^2\bm y
\end{equation}
is absolutely convergent.
\end{lemma}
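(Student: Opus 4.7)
The overall strategy is to mirror the proof of items~(3)--(4) of Lemma~\ref{integrability} pair by pair, taking advantage of the geometric separation built into the hypotheses on $\bm A,\bm B$. The starting point is to write
\[
\left\langle \prod_{l=1}^{p} V_{\gamma e_{\sigma(l)},\eps}(x_l) V_{\gamma e_{\tau(l)},\eps}(y_l) \bm{\mathrm V}_\eps \right\rangle_\delta = C_\eps \cdot (\text{GFF prefactor}) \cdot H^{(\bm x,\bm y)}_{\eps,\delta},
\]
as in Equation~\eqref{expressioncorrels}. Because any two distinct points in $\{\bm z,\bm x,\bm y\}$ stay at mutual distance at least $d>0$ except for the pairs $(x_l,y_l)$ that may merge, the GFF prefactor splits as a uniformly bounded factor (coming from non-merging pairs and from all the factors involving the fixed points $\bm z$) times a potentially singular factor
\[
\prod_{l=1}^p \norm{x_l-y_l}_\eps^{-\gamma^2 \langle e_{\sigma(l)},e_{\tau(l)}\rangle}.
\]
Using the identity $\langle e_i,e_j\rangle = 2\delta_{ij}-1$, the pairs with $\sigma(l)\neq\tau(l)$ contribute the regular factor $\norm{x_l-y_l}^{\gamma^2}$, so only the same-direction pairs, giving $\norm{x_l-y_l}^{-2\gamma^2}$, need to be compensated for by the expectation.

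The core of the argument is then to establish the uniform bound
\[
H^{(\bm x,\bm y)}_{\eps,\delta} \leq C \prod_{l\in S}\norm{x_l-y_l}^{2\gamma^2-2+\zeta},
\]
where $S\subseteq\{1,\ldots,p\}$ is the set of same-direction indices. The plan is to decouple the different merging pairs by H\"older's inequality: apply H\"older with exponents $(p_l)_{l\in S}$ satisfying $\sum 1/p_l=1$, after introducing a partition of unity of $\C$ adapted to the pairwise disjoint sets $A_l\cup B_l$, so that the resulting product consists of $|S|$ factors each of which only sees one merging pair. In each factor, Kahane's convexity inequality should be used to replace the correlated Gaussian field near each pair by an independent copy, so as to bring each single-pair factor within the scope of item~(4) of Lemma~\ref{integrability}. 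That single-pair estimate already combines the direct two-point bound with the freezing correction (as in the Liouville freezing estimate of~\cite[Lemma~6.5]{KRV_loc}) and yields the desired gain $\norm{x_l-y_l}^{-2+\zeta_l+2\gamma^2}$ for some $\zeta_l>0$. Setting $\zeta\coloneqq\min_{l\in S}\zeta_l$ and reassembling with the prefactor contribution yields the claimed pointwise estimate.

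The main technical obstacle is the interplay between the moment exponents $(p_l)$ in H\"older's inequality and the regime in which the freezing estimate of item~(4) is applicable; choosing $p_l$ sufficiently close to $1$ costs at most an arbitrarily small loss $\eta>0$, which can be absorbed by shrinking $\zeta$, provided one checks that the constants produced remain uniform in the positions of all non-merging points, which is ensured by the fact that $\bm A, \bm B$ have bounded geometry. Once the pointwise bound is established, the convergence of the integral displayed in the statement follows from Fubini--Tonelli: the change of variables $u_l = x_l-y_l$ reduces the integral to a product of integrals of the form $\int |u|^{-1}\cdot |u|^{-2+\zeta}\,d^2u$ over bounded neighbourhoods of the origin, which is handled by combining the freezing bound with local integrability of the Cauchy kernel $\frac{1}{x-y}$ in the complex plane.
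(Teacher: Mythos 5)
Your proposal follows the same overall strategy as the paper: split off the GFF prefactor, note that only the pairs with $\sigma(l)=\tau(l)$ contribute a singular factor $\norm{x_l-y_l}^{-2\gamma^2}$, compensate this by a gain extracted from the negative GMC moments localized near each merging pair, and conclude by integrating $\norm{u}^{-3+\zeta}$ near the origin in the plane. The one place where the mechanics as written do not go through is the decoupling step. The quantity to decouple is $\E\bigl[\prod_{i=1,2}Z_i^{-s_i}\bigr]$ where each $Z_i$ is (after your partition of unity) a \emph{sum} of GMC masses localized near the various pairs; H\"older's inequality acts on products inside an expectation, not on negative powers of sums, so the phrase ``apply H\"older so that the resulting product consists of $|S|$ factors'' skips the step that actually creates a product. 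The paper produces it by restricting each GMC integral to disjoint balls $B(x_l,\tfrac d4)$ and applying the pointwise inequality $\bigl(\sum_{l=1}^{r}a_l\bigr)^{-s}\leq\prod_{l=1}^{r}a_l^{-s/r}$, then uses Kahane's convexity inequality to render the fields on the different balls independent so that the expectation of the product factorizes exactly; each factor is a single localized negative moment to which the Liouville freezing estimate of~\cite[Lemma 6.5]{KRV_loc} applies. Your H\"older route is a legitimate substitute for the Kahane/independence step (it multiplies the negative moments by $p_l$, which is harmless since the freezing bound holds for every $s>0$), but it must come \emph{after} such an AM--GM step, not in place of it.

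Two smaller points. First, after decoupling you hold a localized negative GMC moment, not a genuine two-insertion correlation function, so you should invoke the freezing estimate itself rather than item (4) of Lemma~\ref{integrability} verbatim. Second, the case distinction is worth making explicit: for $\gamma<\sqrt{2/3}$ the merged insertion $4\gamma$ is subcritical, the expectation stays bounded, and the exponent $-2\gamma^2>-\tfrac43>-2$ comes from the prefactor alone; only for $\sqrt{2/3}\leq\gamma<\sqrt2$ is the freezing gain $\tfrac{(3\gamma-\frac2\gamma)^2}{2}$ actually needed to push the total exponent above $-2$. Your appeal to item (4) hides this dichotomy; the conclusion is correct, but the argument should surface it.
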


% \begin{proof}[Heuristic argument]
% To facilitate the reader, we give a short sketch of the proof with extra independence assumptions. Suppose that the event happening at non-disjoint sets with positive distance are completely independent: actually, this global decorrelation is not difficult and is a now classical application of Kahane's convexity inequality.

% With the assumptions on $\bm{A}$ and $\bm{B}$, the contribution comes from $p$ independent fusions between each $x_i$ and $y_i$: this explains the product on the right hand side. For each pair, by item (4) of Proposition~\ref{integrability}, the singularity is of order $\norm{x_i-y_i}^{-2+\zeta}$ for some $\zeta>0$: this is because we restrict the weights to be either $\gamma e_1$ or $\gamma e_2$.

% To integrate the first estimate into the second estimate of the lemma, we first need to reorder points to reduce the singularities in the integral into the form of the first item. \yc{continue}
% \end{proof}

\begin{proof}[Proof of Lemma~\ref{fusion}]
The proof of this claim parallels the one in the Liouville case~\cite[Lemma 3.1]{Oi19}. The study is slightly more involved in the present case because of the presence of an additional GFF in the construction of the correlation functions, but the basic idea remains the same.

We separate as before between the prefactor and the expectation term, see Equation~\eqref{eq:definition_H}. Since the domains are disjoint except for $A_l$ and $B_m$ with $l=m$, the prefactor part is bounded by
\begin{equation*}
C\prod_{l=1}^p\norm{x_l-y_l}^{-\gamma^2\ps{e_{\sigma(l)},e_{\tau(l)}}}
\end{equation*}
where $C$ is some positive constant.

For the expectation term the analysis is slightly more subtle, but one can simplify the problem by noticing that when $\sigma(l)\neq\tau(l)$, there is no singularity in the integral (since the sign in the power is the opposite one). As a consequence, and without loss of generality, we may assume $\sigma(l)=\tau(l)=1$ for $l\in E_1\coloneqq\{1,\dots,r\}$ and $\sigma(l)=\tau(l)=2$ for $l\in E_2\coloneqq\{r+1,\dots,p\}$. Therefore the integrand can be bounded by some constant times
\begin{equation*}
\E\left[\prod_{i=1,2}\left(\int_{\C}\frac{F_i(w_i)}{\prod_{l\in E_i} \norm{x_l-w_i}_\eps^{2\gamma^2}\norm{y_l-w_i}_\eps^{2\gamma^2}}M^{\gamma e_i,\hat g}_\eps(d^2w_i)\right)^{-s_i}\right]
\end{equation*}
where $F_1$ and $F_2$ are smooth in a neighbourhood of the $x_l,y_l$. We are now to distinguish between two cases:
\begin{itemize}
    \item If $\gamma<\sqrt{\frac{2}{3}}$, then when $x_l$ and $y_l$ merge the singularity remains integrable since in that case $4\gamma^2<\gamma \ps{Q,e_i}$ so the expectation still makes sense according to \cite[Lemma 4.1]{Toda_construction}. Therefore if $\gamma<\sqrt{\frac{2}{3}}$, the fusion estimate is governed by the prefactor, which scales as $|x_l-y_l|^{-2\gamma^2}$ with $-2\gamma^2>-\frac{4}{3}>-2$.
    \item If $\sqrt{\frac{2}{3}}\leq\gamma<\sqrt2$, we claim that, for any positive $\zeta$, it is bounded by
    \begin{equation*}
    \prod_{l=1}^p\norm{x_l-y_l}^{\frac{(3\gamma-\frac{2}{\gamma})^2}{2}-\zeta}.
    \end{equation*}
    For $\sqrt{\frac{2}{3}}\leq\gamma<\sqrt2$, one checks that $-2\gamma^2+\frac{(3\gamma-\frac{2}{\gamma})^2}{2}>-2$.
\end{itemize}

We now prove the last claim following~\cite[Lemma 3.1]{Oi19}. Using that the exponents $s_i$ are positive, we see that we can bound the expectation by (some constant times)
\begin{equation*}
\E\left[\left(\sum_{l=1}^r\int_{B(x_l,\frac{d}{4})} \frac{M^{\gamma e_1,\hat g}_\eps(d^2w_1)}{ \norm{x_l-w_1}_\eps^{2\gamma^2}\norm{y_l-w_1}_\eps^{2\gamma^2}}\right)^{-s_1}\left(\sum_{l=r+1}^p\int_{B(x_l,\frac{d}{4})}\frac{M^{\gamma e_2,\hat g}_\eps(d^2w_2)}{ \norm{x_l-w_2}_\eps^{2\gamma^2}\norm{y_l-w_2}_\eps^{2\gamma^2}}\right)^{-s_2}\right].
\end{equation*}
We can suppose that the GFFs in different balls $B(x_l,\frac{d}{4})$ are independent: this is a classical manipulation of Kahane's convexity inequality, since the covariance of different GFFs in different balls is uniformly bounded from below and above by a global constant (see the proof of \cite[Lemma 4.1]{Toda_construction}). Furthermore, we use the elementary inequality, for positive $a$'s and $s>0$,
\begin{equation*}
\left(\sum\limits_{l=1}^{r}a_l\right)^{-s}\leq \prod\limits_{l=1}^{r}a_l^{-s/r}
\end{equation*}
to reduce the problem to the following estimate: for $s>0$ and any $\zeta>0$ there exists a positive constant $C$ such that
\begin{equation*}
\E\left[\left(\int_{B(x,\frac{d}{4})}\frac{M^{\gamma e_1,\hat g}_\eps(d^2w)}{ \norm{x-w}_\eps^{2\gamma^2}\norm{y-w}_\eps^{2\gamma^2}}\right)^{-s}\right]\leq C\norm{x-y}^{\frac{(3\gamma-\frac{2}{\gamma})^2}{2}-\zeta}.
\end{equation*}
Since for $\gamma^2\geq\frac{2}{3}$, $4\gamma\geq q=\gamma+\frac{2}{\gamma}$, the classical freezing estimate~\cite[Lemma 6.5]{KRV_loc} yields the bound $|x-y|^{\frac{(4\gamma-q)^2}{2}-\zeta}$ for any $\zeta>0$, which is the above.

For the second item of Lemma~\ref{fusion} we use the first estimate to see that all one needs to prove is that for positive $\zeta$ the quantity
\begin{equation*}
\int_{\bm A}\int_{\bm B}\frac{1}{\prod_{l=1}^p\norm{x_l-y_l}^{3-\zeta}}d^2\bm x d^2\bm y
\end{equation*}
is finite. Since the domains are disjoint we see that it is enough to check that the two-fold integral
\begin{equation*}
\int_{B(0,\frac{\rho}{6})}\int_{A(0,\frac\rho{6},\frac{\rho}{3})}\frac{1}{\norm{x-y}^{3-\zeta}}d^2x d^2y
\end{equation*}
converges. This can be easily seen by using polar coordinates.
\end{proof}

\subsection{Second order differentiability for the correlation functions (Proposition~\ref{regularity})}\label{proof_C2}

Recall the expression for the first derivative of the regularized correlation functions (we keep the same notations and performed an integration by parts with Stokes' formula):
\begin{equation*}
\begin{split}
    &\partial_{z_0}\ps{V_{\alpha_0,\eps}(z_0)\bm{\mathrm V}_\eps}_\delta={}-\frac{1}{2}\sum_{k=1}^N\frac{\ps{\alpha_0,\alpha_k}}{(z_0-z_k)_\eps}\ps{V_{\alpha_0,\eps}(z_0)\bm{\mathrm V}_\eps}_\delta\\
    &+\mu\sum_{i=1}^{2}\int_{B(z_0,r)^c}\frac{\gamma\ps{\alpha_0,e_i}}2\frac{\theta_\delta(z_0-x) }{(z_0-x)_\eps}\ps{V_{\gamma e_i,\eps}(x)V_{\alpha_0,\eps}(z_0)\bm{\mathrm V}_\eps}_\delta d^2x\\
    &+\mu\sum_{i=1}^2\oint_{\partial B(z_0,r)}\frac{\theta_\delta(z_0-\xi) }{(z_0-\xi)_\eps}\ps{V_{\gamma e_i,\eps}(\xi)V_{\alpha_0,\eps}(z_0)\bm{\mathrm V}_\eps}_\delta \frac{\sqrt{-1}d\bar\xi}2\\
    &-\mu\sum_{i=1}^2\int_{A}\sum_{k=1}^N\frac{\gamma\ps{\alpha_k,e_i}}2\frac{\theta_\delta(z_0-x)}{(z_k-x)_\eps}\ps{V_{\gamma e_i,\eps}(x)V_{\alpha_0,\eps}(z_0)\bm{\mathrm V}_\eps}_\delta d^2x\\
    &-\sum_{i,j=1}^2\frac{(\mu\gamma)^2\ps{e_i,e_j}}2\int_{A}\int_{B(z_0,r)^c} \frac{\theta_\delta(z_0-x_1)\theta_\delta(z_0-x_2)}{(x_1-x_2)_\eps}\ps{V_{\gamma e_i,\eps}(x_1)V_{\gamma e_j,\eps}(x_2)V_{\alpha_0,\eps}(z_0)\bm{\mathrm V}_\eps}_\delta d^2x_1d^2x_2.
\end{split}
\end{equation*}
where $A=A(z_0,\delta/2,r)$ is the annulus of radii $\delta/2$ and $r$ centered at $z_0$. We have already seen that the correlation functions are $C^1$ with respect to $z_0$. It follows that the first three terms above are $C^1$ in $z_0$, since the integrand remains smooth as $\eps,\delta\to 0$ (we stay away from the singular point $z_0$).

To prove that the correlation functions are $C^2$, we differentiate the two last terms and prove that the $\eps,\delta\to 0$ limit is finite. Differentiating the penultimate line with respect to $z_0$ yields 
\begin{equation*}
\begin{split}
&+\mu\sum_{i=1}^2\int_{A}\sum_{k=1}^N\frac{\ps{\alpha_k,\gamma e_i}}{2(z_k-x)_\eps}\left(\sum_{l=1}^N\frac{\theta_\delta(z_0-x)\ps{\alpha_0,\alpha_l}}{2(z_0-z_k)_\eps}+\frac{\ps{\alpha_0,\gamma e_i}\theta_\delta(z_0-x)}{2(z_0-x)_\eps}\right)\ps{V_{\gamma e_i,\eps}(x)V_{\alpha_0,\eps}(z_0)\bm{\mathrm V}_\eps}_\delta d^2x\\
&-\mu\sum_{i=1}^2\int_{A}\sum_{k=1}^N\frac{\ps{\alpha_k,\gamma e_i}}{2(z_k-x)_\eps}\partial_{z_0}\theta_\delta(z_0-x)\ps{V_{\gamma e_i,\eps}(x)V_{\alpha_0,\eps}(z_0)\bm{\mathrm V}_\eps}_\delta d^2x\\
&-\mu^2\sum_{i,j=1}^2\int_{A}\int_\C\sum_{k=1}^N\frac{\ps{\alpha_k,\gamma e_i}\theta_\delta(z_0-x_1)}{2(z_k-x_1)_\eps}\frac{\ps{\alpha_0,\gamma e_j}\theta_\delta(z_0-x_2)}{2(z_0-x_2)_\eps}\ps{V_{\gamma e_i,\eps}(x_1)V_{\gamma e_j,\eps}(x_2)V_{\alpha_0,\eps}(z_0)\bm{\mathrm V}_\eps}_\delta d^2x_1d^2x_2\\
&+\mu^2\sum_{i,j=1}^2\int_{A}\int_\C\sum_{k=1}^N\frac{\ps{\alpha_k,\gamma e_i}\theta_\delta(z_0-x_1)}{2(z_k-x_1)_\eps}\partial_{z_0}\theta_\delta(z_0-x_2)\ps{V_{\gamma e_i,\eps}(x_1)V_{\gamma e_j,\eps}(x_2)V_{\alpha_0,\eps}(z_0)\bm{\mathrm V}_\eps}_\delta d^2x_1d^2x_2.
\end{split}
\end{equation*}

For the terms involving derivatives of $\theta_\delta$, we again integrate them by parts:
\begin{align*}
\mu\sum_{i=1}^2\oint_{\partial B(z_0,r)}\sum_{k=1}^N\frac{\ps{\alpha_k,\gamma e_i}}{2(z_k-\xi)_\eps}\theta_\delta(z_0-\xi)\ps{V_{\gamma e_i,\eps}(\xi)V_{\alpha_0,\eps}(z_0)\bm{\mathrm V}_\eps}_\delta  \frac{\sqrt{-1}d\bar\xi}2+\hspace{5.5cm}&\\
\hspace{-2cm}\mu\sum_{i=1}^2\int_{A}\sum_{k=1}^N\frac{\ps{\alpha_k,\gamma e_i}}{2(z_k-x)_\eps}\left(\sum_{l=1}^N\frac{\theta_\delta(z_0-x)\ps{\alpha_0,\alpha_l}}{2(z_0-z_k)_\eps}+\frac{\ps{\alpha_l,\gamma e_i}\theta_\delta(z_0-x)}{2(x-z_l)_\eps}+\frac{\theta_\delta(z_0-x)}{2(x-z_k)_\eps}\right)\ps{V_{\gamma e_i,\eps}(x)V_{\alpha_0,\eps}(z_0)\bm{\mathrm V}_\eps}_\delta &d^2x\\
-\mu^2\sum_{i,j=1}^2\int_{A}\int_{B(z_0,r)^c}\sum_{k=1}^N\frac{\ps{\alpha_k,\gamma e_i}\theta_\delta(z_0-x_1)}{2(z_k-x_1)_\eps}\times\hspace{9cm}&\\
\left(\frac{\ps{\alpha_0,\gamma e_j}\theta_\delta(z_0-x_2)}{2(z_0-x_2)_\eps}+\frac{\ps{\gamma e_i,\gamma e_j}\theta_\delta(z_0-x_2)}{2(x_1-x_2)_\eps}-\partial_{z_0}\theta_\delta(z_0-x_2)\right)\ps{V_{\gamma e_i,\eps}(x_1)V_{\gamma e_j,\eps}(x_2)V_{\alpha_0,\eps}(z_0)\bm{\mathrm V}_\eps}_\delta d^2x_1&d^2x_2\\
-\mu^2\sum_{i,j=1}^2\int_{A^2}\sum_{k=1}^N\frac{\ps{\alpha_k,\gamma e_i}\theta_\delta(z_0-x_1)}{2(z_k-x_1)_\eps}\times\hspace{10cm}&\\
\left(\frac{\ps{\alpha_0,\gamma e_j}\theta_\delta(z_0-x_2)}{2(z_0-x_2)_\eps}+\frac{\ps{\gamma e_i,\gamma e_j}\theta_\delta(z_0-x_2)}{2(x_1-x_2)_\eps}-\partial_{z_0}\theta_\delta(z_0-x_2)\right)\ps{V_{\gamma e_i,\eps}(x_1)V_{\gamma e_j,\eps}(x_2)V_{\alpha_0,\eps}(z_0)\bm{\mathrm V}_\eps}_\delta d^2x_1&d^2x_2.
\end{align*}
All but the two last lines remain bounded as $\eps,\delta\rightarrow0$ thanks to Lemmas~\ref{integrability} and~\ref{fusion}. 

Using again integration by parts for the last line we get regular terms plus:
\begin{align*}
-\mu^2\sum_{i,j=1}^2\int_{A^2}\sum_{k,l=1}^N\frac{\ps{\alpha_k,\gamma e_i}\theta_\delta(z_0-x_1)}{2(z_k-x_1)_\eps}&\frac{\ps{\alpha_k,\gamma e_j}\theta_\delta(z_0-x_2)}{2(z_k-x_2)_\eps}\ps{V_{\gamma e_i,\eps}(x_1)V_{\gamma e_j,\eps}(x_2)V_{\alpha_0,\eps}(z_0)\bm{\mathrm V}_\eps}_\delta d^2x_1d^2x_2\\
+\mu^3\sum_{i,j,l=1}^2\int_{A^2}\int_\C\sum_{k=1}^N\frac{\ps{\alpha_k,\gamma e_i}\theta_\delta(z_0-x_1)}{2(z_k-x_1)_\eps}&\frac{\ps{\gamma e_j,\gamma e_l}\theta_\delta(z_0-x_2)}{2(x_3-x_2)_\eps}\\
&\ps{V_{\gamma e_i,\eps}(x_1)V_{\gamma e_j,\eps}(x_2)V_{\gamma e_l,\eps}(x_3)V_{\alpha_0,\eps}(z_0)\bm{\mathrm V}_\eps}_\delta d^2x_1d^2x_2d^2x_3.
\end{align*}
Using the symmetries in the $x_2,x_3$ variables, the last integral vanishes on $A^3$. This means that this term remains bounded when $\eps,\delta\rightarrow0$ by using Lemma~\ref{fusion}.

To finish up with the proof that the correlation functions are $C^2$ it remains to take care of the derivative of the term
\begin{equation*}
-\sum_{i,j=1}^2\frac{(\mu\gamma)^2\ps{e_i,e_j}}2\int_{A}\int_{B(z_0,r)^c} \frac{\theta_\delta(z_0-x_1)\theta_\delta(z_0-x_2)}{(x_1-x_2)_\eps}\ps{V_{\gamma e_i,\eps}(x_1)V_{\gamma e_j,\eps}(x_2)V_{\alpha_0,\eps}(z_0)\bm{\mathrm V}_\eps}_\delta d^2x_1d^2x_2.
\end{equation*}
We proceed in the same way by using integration by parts to get rid of the terms involving derivatives of $\theta_\delta$. We may follow the same lines as in the previous computation, apart from the fact that the new integration domains we will consider either contain or don't the previous integration variables. We won't write down the details here since they are quite lengthy and not informative, nonetheless we see that in the end it is enough to show that the integrals (here $\rho$ is some positive number)
\begin{equation*}
\int_{A^2}\int_{B(z_0,r)^c} \frac{\mathds{1}_{\norm{x_1-x_2}>\rho}}{(x_1-x_3)(x_2-x_3)}\ps{\prod\limits_{k=1}^{3}V_{\gamma e_{\star},\eps}(x_k)V_{\alpha_0,\eps}(z_0)\bm{\mathrm V}_\eps}_\delta d^2x_1d^2x_2d^2x_3
\end{equation*}
and
\begin{equation*}
\int_{A^2}\int_{(B(z_0,r)^c)^2} \frac{\mathds{1}_{\norm{x_1-x_2}>\rho}\mathds{1}_{\norm{x_3-x_4}>\rho}}{(x_1-x_3)(x_2-x_4)}\ps{\prod\limits_{k=1}^{4}V_{\gamma e_{\star},\eps}(x_k)V_{\alpha_0,\eps}(z_0)\bm{\mathrm V}_\eps}_\delta d^2x_1d^2x_2d^2x_3d^2x_4
\end{equation*}
are absolutely convergent, which follows from Lemma~\ref{fusion}. Therefore the $\frac{\partial^2}{\partial z_0^2}$ derivative of the correlation functions are well-defined. Treating the mixed derivatives $\frac{\partial^2}{\partial z_0\partial \bar z_0}$ leads to the same conclusion. Hence existence of both the $\frac{\partial^2}{\partial z_0^2}$ and $\frac{\partial^2}{\partial z_0\partial \bar z_0}$ derivatives: this implies that the correlation functions are $C^2$.

\begin{remark}
The same reasoning shows that the correlation functions are smooth, in conjunction with Lemma~\ref{fusion}. This is in agreement with the Liouville case as shown in~\cite{Oi19}.
\end{remark}
%%%%%%%%%%%%%%%%%%%%%%%%%%%%%%%%%%%%%%%%%%%%%%%%%%%%%%%
\subsection*{Data Availability Statement}
Data sharing is not applicable to this article as no datasets were generated or analysed.

\bibliographystyle{plain}
\bibliography{biblio}

\end{document}